\newtheorem*{acknowledgements}{Acknowledgements}
\def\oo{\mathscr{O}}
\def\Id{\mathrm{Id}}
\def\pp{\mathscr{P}}
\def\As{\mathrm{As}}
\def\bJ{\mathbf{J}}
\def\bl{\boldsymbol{\lambda}}
\def\bk{\boldsymbol{\kappa}}
\def\bt{\boldsymbol{\tau}}
\def\det{\mathrm{det}}
\def\Cusp{\mathrm{Cusp}}
\newcounter{makeconstant}
\newenvironment{makeconstant}%
{\refstepcounter{makeconstant}}%
{}
\theoremstyle{plain}
\newtheorem{theorem}[equation]{Theorem}
\newtheorem{lemma}[equation]{Lemma}
\newtheorem{corollary}[equation]{Corollary}
\newtheorem{proposition}[equation]{Proposition}
\theoremstyle{definition}
\newtheorem{definition}[equation]{Definition}
\newtheorem{remark}[equation]{Remark}
\newtheorem*{remark*}{Remark}
\numberwithin{equation}{section} 
\def\E{{\rm E}}
\def\F{{\rm F}}
\def\G{{\rm G}}
\def\H{{\rm H}}
\def\I{{\rm I}}
\def\J{{\rm J}}
\def\K{{\rm K}}
\def\L{{\rm L}}
\def\N{{\rm N}}
\def\P{{\rm P}}
\def\R{{\rm R}}
\def\T{{\rm T}}
\def\W{{\rm W}}
\def\X{{\rm X}}
\def\Z{{\rm Z}}
\def\Fl{\overline{\mathbb{F}_\ell}}
\def\Ql{\overline{\mathbb{Q}_\ell}}
\def\Zl{\overline{\mathbb{Z}_\ell}}
\def\Cc{\mathscr{C}}
\def\Ww{\mathcal{W}}
\def\kk{\mathrm{k}}
\def\ll{\mathrm{l}}
\def\so{\mathsf{o}}
\def\k{{\kappa}}
\def\o{\mathfrak{o}}
\def\w{\varpi}
\def\({\left(}
\def\){\right)}
\def\>{\geqslant}
\def\<{\leqslant}
\def\Hom{\operatorname{Hom}}
\def\GL{\operatorname{GL}}
\def\Gal{\operatorname{Gal}}
\def\Res{\operatorname{Res}}
\def\ind{\operatorname{ind}}
\def\dim{\operatorname{dim}}
\def\val{\operatorname{val}}
\def\St{\operatorname{St}}
\def\presuper#1#2%
\title{Characterisation of the poles of the $\ell$-modular Asai~$\L$-factor}
\author{Robert Kurinczuk}
\address{Robert Kurinczuk, Department of Mathematics, Imperial College, London, SW7 2AZ, United Kingdom.}
\email{robkurinczuk@gmail.com}
\author{Nadir Matringe}
\address{Universit\'e de Poitiers, Laboratoire de Math\'ematiques et Applications,
T\'el\'eport 2 - BP 30179, Boulevard Marie et Pierre Curie, 86962, Futuroscope Chasseneuil Cedex. France.}
\email{nadir.matringe@math.univ-poitiers.fr}
\subjclass[2010]{22E50; 11F70}
\begin{document}
\maketitle

\begin{abstract}
Let~$\F/\F_\so$ be a quadratic extension of non-archimedean local fields, set~$\G=\GL_n(F)$, 
$\G_\so=\GL_n(\F_\so)$ and let~$\ell$ be a prime number different from the residual characteristic of~$\F$. For a complex cuspidal representation~$\pi$ of~$\G$, the Asai~$\L$-factor~$\L_{\As}(\X,\pi)$ has a pole at~$\X=1$ if and only if~$\pi$ is~$\G_\so$-distinguished.   In this paper we solve the problem of characterising the occurrence of a pole at~$\X=1$ of~$\L_{\As}(\X,\pi)$ when~$\pi$ is an~$\ell$-modular cuspidal representation of~$\G$: we show that~$\L_{\As}(\X,\pi)$ has a pole at~$\X=1$ if and only if~$\pi$ is a \emph{relatively banal} distinguished representation; namely~$\pi$ is~$\G_\so$-distinguished but not~$\vert\det(~ )|_{\F_\so}$-distinguished.  This notion turns out to be an exact analogue for the symmetric space 
$\G/\G_\so$ of M\' inguez and S\'echerre's notion of banal cuspidal~$\Fl$-representation of~$\G_\so$.
  Along the way we compute the Asai~$\L$-factor of all cuspidal~$\ell$-modular representations of~$\G$ in terms of type theory, and prove new results concerning lifting and reduction modulo~$\ell$ of distinguished cuspidal representations. Finally, we determine when the natural~$\G_{\so}$-period on the Whittaker model of a distinguished cuspidal representation of $\G$ is nonzero. 
\end{abstract}

\section{Introduction}\label{section intro}

Let~$\F/\F_\so$ be a quadratic extension of non-archimedean local fields of residual characteristic~$p\neq 2$, and set~$\G=\GL_n(\F)$ and~$\G_\so=\GL_n(\F_\so)$.  An irreducible representation of~$\G$ is said to be \emph{distinguished by~$\G_\so$} if it possesses a nonzero~$\G_\so$-invariant linear form.  In the case of complex representations, the equality of the Asai~$\L$-factor defined by the Rankin--Selberg method and its Galois avatar (\cite{AR}, \cite{MatringeDistinguishedGeneric}) provides a bridge between functorial lifting from the quasi-split unitary group~$\mathrm{U}_n(\F/\F_\so)$ and~$\G_\so$-distinction of discrete series representations of~$\G$: a discrete series of~$\G$ is a (stable or unstable depending on the parity of~$n$) lift of a (necessarily discrete series) representation of 
$\mathrm{U}_n(\F/\F_\so)$ if and only if the Asai~$\L$-factor of its Galois parameter has a pole at~$\X=1$ (\cite{MokEndoscopic}, \cite{GGP1}), whereas it is~$\G_{\so}$-distinguished if and only if its Asai~$\L$-factor obtained by the Rankin-Selberg method has a pole at~$\X=1$ (\cite{Kable}, \cite{AKT}).\\

Recently, motivated by the study of congruences between automorphic representations, there has been great interest in studying representations of~$\G$ on vector spaces over fields of positive characteristic~$\ell$.  There are two very different cases, when~$\ell=p$ and when~$\ell\neq p$. This article focuses on the latter~$\ell\neq p$ case, where there is a theory of Haar measure which allows us to define Asai~$\L$-factors via the Rankin--Selberg method as in the complex case (Section \ref{section Asai}).\\
 
The aim of this article is to show that in this case a connection remains between the poles of Asai~$\L$-factor and distinction, however this connection no longer characterises distinction, but a more subtle notion which we call 
\textit{relatively banal} distinction. The easiest way to state that a cuspidal distinguished~$\ell$-modular representation is relatively banal is to say that it is not~$|\det(\ \cdot \ )|_{\F_\so}$-distinguished, where~$|\det(\ \cdot \ )|_{\F_\so}$ is considered as an~$\Fl$-valued character, but other compact definitions can also be given in terms of type theory as well as in terms of its supercuspidal lifts:

\begin{proposition}[Defintion \ref{definition relbanaldef}, Theorem \ref{thm relatively banal distinction lift} and 
Corollary \ref{cor relatively banal simple definition}]
Let~$\pi$ be an~$\ell$-modular cuspidal distinguished representation of~$\G$. Then the following are equivalent, and when they are satisfied we say that $\pi$ is \emph{relatively banal}:
\begin{enumerate}
\item~$\pi$ is not~$|\det(\ \cdot \ )|_{\F_\so}$-distinguished.
\item All supercuspidal lifts of~$\pi$ are distinguished by an unramified character of~$\G_\so$.
\item~$q_\so^{n/e_\so(\pi)}\not\equiv 1[\ell]$, where~$e_\so(\pi)$ denotes the invariant associated to~$\pi$ in 
\cite[Lemma 5.10]{AKMSS} (see Section \ref{GSDTsection}).
\end{enumerate}
\end{proposition}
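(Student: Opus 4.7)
The plan is to take condition (i) as the definition of relative banality (Definition \ref{definition relbanaldef}) and then chain the implications (ii) $\Rightarrow$ (i) $\Rightarrow$ (iii) $\Rightarrow$ (ii), the bridge throughout being the lift--reduction dictionary for distinguished cuspidal $\ell$-modular representations.

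First I would set up the dictionary. Fix a supercuspidal $\ell$-adic lift $\tilde\pi$ of $\pi$. Using the lifting/reduction theorems for distinction that the paper announces, $\tilde\pi$ is itself $\G_\so$-distinguished, and the set of characters $\tilde\chi$ of $\F_\so^\times$ such that $\tilde\pi$ is $(\tilde\chi\circ\det)$-distinguished is a torsor under the unramified twisting group of $\tilde\pi$, the order of which is exactly $n/e_\so(\pi)$. A character $\chi$ of $\F_\so^\times$ distinguishes $\pi$ if and only if it is the reduction modulo $\ell$ of such a $\tilde\chi$ for some supercuspidal lift of $\pi$ (not necessarily $\tilde\pi$ itself). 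Under this dictionary, the failure of (i) is exactly the existence of some lift admitting a distinguishing $\tilde\chi$ with $\tilde\chi\bmod\ell=|\det(\ \cdot\ )|_{\F_\so}$.

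With the dictionary in hand: for (ii) $\Rightarrow$ (i), if every supercuspidal lift is distinguished only by unramified characters, then every $\tilde\chi$ as above is unramified with a fixed value at $\w_\so$; the cyclic structure of the unramified twisting group then forbids the reduction $|\det(\ \cdot\ )|_{\F_\so}$ (which would require a nontrivial $\ell$-th root of unity in this cyclic group). For (i) $\Rightarrow$ (iii), if $q_\so^{n/e_\so(\pi)}\equiv 1 \pmod\ell$, then this cyclic group of order $n/e_\so(\pi)$ contains an element whose value at $\w_\so$ reduces to $q_\so^{-1}\bmod\ell$, producing a distinguishing character of $\pi$ equal to $|\det(\ \cdot\ )|_{\F_\so}$. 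For (iii) $\Rightarrow$ (ii), the non-congruence $q_\so^{n/e_\so(\pi)}\not\equiv 1\pmod\ell$ prevents any ramified character of a lift from reducing to an unramified character mod $\ell$, so by tracking orders one shows every distinguishing character of a lift must already be unramified.

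The principal obstacle is the input needed to set up the dictionary: first, the lift/reduction theorems for distinguished cuspidals ensuring that distinction transports faithfully between $\pi$ and all its supercuspidal lifts; and second, the type-theoretic identification of $e_\so(\pi)$ as the parameter controlling the unramified twisting group of those lifts, which is the content of Section \ref{GSDTsection} and builds on the invariant from \cite[Lemma 5.10]{AKMSS}. Once these are granted, the three equivalences reduce to cyclic-group bookkeeping: comparing the order $n/e_\so(\pi)$ of the twisting group with the multiplicative order of $q_\so$ modulo $\ell$, and locating $|\det(\ \cdot\ )|_{\F_\so}$ inside the resulting coset.
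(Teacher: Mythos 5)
Your "dictionary" starts from a premise that the paper explicitly disavows and that is, in fact, part of what this proposition is supposed to establish. You write that, for a supercuspidal lift $\tilde\pi$ of $\pi$, "$\tilde\pi$ is itself $\G_\so$-distinguished," and you then treat the set of characters $\tilde\chi$ with $\tilde\pi$ $(\tilde\chi\circ\det)$-distinguished as a nonempty torsor. But the paper states outright (in the discussion preceding Theorem \ref{thm distinction reduces mod ell}) that distinction of cuspidal representations does not always lift: a distinguished $\ell$-modular cuspidal can have \emph{no} distinguished lifts, and a fortiori no lifts distinguished by any character at all of the form $\tilde\chi\circ\det$. Whether the lifts carry such a linear form is precisely the content of condition (ii), so your argument is circular: the dictionary you want to invoke for all three implications only exists once (ii), or equivalently (iii), is already known. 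Theorem \ref{thm distinction reduces mod ell} gives only the downward direction (distinguished $\ell$-adic $\Rightarrow$ distinguished $\ell$-modular), never the upward one.

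The paper's actual mechanism is different and is what plugs this hole. For (iii) $\Leftrightarrow$ (i) one does not need to lift at all: Theorem \ref{thm Xso}(ii) applies directly to the $\Fl$-representation $\pi$ and identifies when $|\det(\cdot)|_\so \in \X_\so(\pi)$, giving the equivalence in one line. For (iii) $\Rightarrow$ (ii), the bridge is type-theoretic: take a distinguished type $(\bJ,\bl)$ in $\pi$ and lift it to a type $(\bJ,\widetilde\bl)$ of $\widetilde\pi$; since $\J^\sigma$ is compact with pro-$p$ pro-order, Lemma \ref{lemma finite rel banal dist lifts} lets you lift the distinction of $\bl\mid_\J$ to $\widetilde\bl\mid_\J$, and Lemma \ref{lemma retriction to J and distinction} then forces $\widetilde\pi$ to be an unramified twist of a distinguished representation. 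This is precisely the valid form of the lift--reduction dictionary: it lives on the level of restricted types, not on the level of whole representations, and it is what your sketch is missing. For the converse (ii) $\Rightarrow$ (iii), the paper uses the finite-field analysis of $\sigma$-self-dual lifts (Propositions \ref{Propsigmaself-dualcases}, \ref{Prop always non ssdual lift when l=2}, etc.) to manufacture, whenever $q_\so^{n/e_\so(\pi)}\equiv 1[\ell]$, a supercuspidal lift whose type is generic but not distinguished after restriction to $\J$, hence is not an unramified twist of a distinguished representation. Your appeal to "tracking orders" would need to be replaced by this explicit construction: the obstruction is not a congruence between values of characters at $\w_\so$, but the failure of $\sigma$-self-duality of the level-zero part of the type to lift.

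Finally, even granting your (invalid) dictionary, the bookkeeping is not quite right: the unramified twisting group $\X_\so(\widetilde\pi)$ of a distinguished $\Ql$-lift is cyclic of order $n/e_\so(\pi)$ (Corollary \ref{cor cardinality of Xso}), but the group $\X_\so(\pi)$ of the $\Fl$-representation has order only $a(\pi) = (n/e_\so(\pi))/\ell^{\val_\ell(n/e_\so(\pi))}$, and the reduction map between them has kernel the $\ell$-singular part (Corollary \ref{cor reduction mod ell on Xso}). This is not a torsor correspondence of the kind you describe and would require care to turn into a proof.
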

Relatively banal for~$\G_\so$-distinguished cuspidal representations turns out to be the exact analogue of the definition of \emph{banal cuspidal representations} of $\G_\so$ (see \cite[Remark 8.15]{MSDuke} and \cite{MSbanal}) after one identifies the cuspidal (irreducible) representations of~$\G_\so$ with the $\Delta(\G_\so)$-distinguished cuspidal (irreducible) representations of~$\G_\so\times\G_\so$, where~$\Delta:\G_\so\rightarrow\G_\so\times\G_\so$ is the diagonal embedding, as we explain in Section \ref{section comparison banal rel banal}.\\

The main theorem of this paper characterises the poles of the Asai~$\L$-factor:

\begin{theorem}[Theorem \ref{thm Pole of Asai}]\label{Main}
Let~$\pi$ be a cuspidal~$\ell$-modular representation, then~$\L_{\As}(\X,\pi)$ has a pole at~$\X=1$ if and only if~$\pi$ is relatively banal distinguished.
\end{theorem}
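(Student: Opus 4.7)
The plan is to use the explicit type-theoretic formula for $\L_{\As}(\X,\pi)$ established earlier in the paper, combined with the classical description of the complex Asai $L$-factor of a supercuspidal, to translate the pole-at-$\X=1$ question into the arithmetic condition (iii) of the preceding proposition.

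Fix a supercuspidal $\Ql$-lift $\tilde\pi$ of $\pi$. In the complex setting $\L_{\As}(\X,\tilde\pi)$ is the inverse of a polynomial whose simple roots $\X=a^{-1}$ correspond to the unramified characters $\chi$ of $\G_\so$ with $\chi(\w_\so)=a$ by which $\tilde\pi$ is $\chi$-distinguished; if $\tilde\pi$ is distinguished by some such $\chi$, this polynomial has degree $d:=n/e_\so(\tilde\pi)$, otherwise it is trivial. For the ``if'' direction, suppose $\pi$ is relatively banal distinguished. By part~(ii) of the preceding proposition we may choose $\tilde\pi$ distinguished by an unramified character, so $\L_{\As}(\X,\tilde\pi)^{-1}$ has degree $d$ and a simple root at $\X=1$. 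Condition~(iii), $q_\so^{d}\not\equiv 1\,[\ell]$, is precisely what ensures that the $d$ roots remain pairwise distinct after reduction modulo $\ell$; the simple pole at $\X=1$ therefore survives the reduction, and combined with the compatibility between the $\Ql$- and $\Fl$-Asai factors made precise by the type-theoretic computation, this yields the required pole of $\L_{\As}(\X,\pi)$ at $\X=1$.

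Conversely, a pole of $\L_{\As}(\X,\pi)$ at $\X=1$ first forces $\pi$ to be $\G_\so$-distinguished, since the residue at $\X=1$ of a Rankin--Selberg integral is a nonzero $\G_\so$-invariant linear form on the Whittaker model of $\pi$. It then remains to exclude the case where $\pi$ is distinguished but not relatively banal. In this regime $q_\so^{d}\equiv 1\,[\ell]$, the unramified character $|\det(\cdot)|_{\F_\so}$ of $\G_\so$ reduces to the trivial character modulo $\ell$, and $\pi$ is simultaneously $1$- and $|\det|_{\F_\so}$-distinguished; two distinct roots $a_i^{-1}$ of the lift polynomial then collide to $\X=1$ modulo $\ell$. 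The type-theoretic expression for $\L_{\As}(\X,\pi)$ -- which in the non-banal regime is \emph{not} the naive mod-$\ell$ reduction of the lift formula -- shows that the Rankin--Selberg gcd becomes coprime to $(1-\X)$, eliminating the pole. This cancellation is the hardest point, and establishing it rigorously requires delicately combining the type-theoretic computation with the lifting and reduction-modulo-$\ell$ results on distinguished cuspidals proved earlier in the paper, in order to exhibit Rankin--Selberg integrals arising from the $|\det|_{\F_\so}$-distinguishing data that prevent $(1-\X)$ from dividing the gcd.
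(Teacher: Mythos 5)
Your proposal pitches in the right direction --- it wants to reduce the pole question to the explicit type-theoretic computation of the $\ell$-modular Asai factor --- but both directions go off the rails, and the converse is essentially undone. It is also, in the converse, precisely the approach the paper rules out.

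On the ``if'' direction: the explanation you give for the role of the condition $q_\so^{n/e_\so(\pi)}\not\equiv 1\,[\ell]$ is not correct. You say it ``ensures that the $d$ roots remain pairwise distinct after reduction.'' The roots of $1-\X^d$ remain distinct modulo $\ell$ if and only if $\ell\nmid d$; this is a different condition, and in fact when $\ell\mid d$ the pole at $\X=1$ still survives, merely with multiplicity $\ell^{\val_\ell(d)}$ (Theorem \ref{thm Pole of Asai} records exactly this order). The condition $q_\so^{n/e_\so(\pi)}\not\equiv 1\,[\ell]$ enters through an entirely different mechanism: the test-vector identity of Proposition \ref{proptestvectors} produces the scalar factor $(q_\so-1)(q_\so^{n/e_\so(\pi)}-1)$ in front of $\L_\As(\X,\pi)$, and the relative banality hypothesis is exactly what prevents this scalar from vanishing modulo $\ell$, so that the reduction of the $\ell$-adic equality yields $\L_\As(\X,\pi)=r_\ell(\L_\As(\X,\widetilde\pi))$ (Corollary \ref{cor rel banal and red mod ell of the L factor}) rather than a strict divisor (Lemma \ref{Lfactordivision}). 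Once you quote Theorem \ref{thm AsaiLfunctioncomputation} the ``if'' direction is one line; the lift business and the root-collision discussion are neither needed nor the right picture.

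On the ``only if'' direction there is a genuine gap. The residue argument you invoke is the Kable argument over $\mathbb C$, and Remark \ref{remark Kable} in the paper explains at length why it does \emph{not} adapt to $\Fl$: (a) the pole need not be simple (its order is $\ell^{\val_\ell(n/e_\so(\pi))}$), so ``the residue at $\X=1$'' of an Asai integral is not the object you want, and one would have to decide in advance which power of $(1-\X)$ to strip off; and (b) even when the pole is there, the local period $\mathcal L_\pi$ that Kable identifies as the leading coefficient can be identically zero for relatively banal distinguished $\pi$ --- Theorem \ref{thm nonvanishingperiod} pins down exactly when it vanishes. So ``a pole of $\L_\As(\X,\pi)$ at $\X=1$ forces $\pi$ to be distinguished because the residue is a nonzero invariant linear form'' does not go through over $\Fl$ without substantial further work, and that further work is not indicated. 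The second half of your converse --- excluding distinguished non-relatively-banal representations --- is announced as ``the hardest point'' and then sketched only at the level of intention, not carried out.

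What the paper actually does is far shorter and avoids the period entirely. Theorem \ref{thm AsaiLfunctioncomputation} says: either $\pi$ is an unramified twist $(\chi\circ\det)\otimes\pi_0$ of a relatively banal distinguished $\pi_0$, in which case $\L_\As(\X,\pi)=\bigl(1-(\chi_\so(\w_\so)\X)^{n/e_\so(\pi)}\bigr)^{-1}$; or it is not, in which case $\L_\As(\X,\pi)=1$ and there is nothing to prove. In the first case the pole at $\X=1$ is equivalent to $\chi_\so(\w_\so)^{n/e_\so(\pi)}=1$, which by Theorem \ref{thm Xso}(ii) says exactly that $\chi_\so\circ\det$ lies in $\X_\so(\pi_0)$, i.e.\ that $\pi$ itself is distinguished. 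The converse is literally the first case of Theorem \ref{thm AsaiLfunctioncomputation}. You should anchor your proof on Theorem \ref{thm AsaiLfunctioncomputation} and Theorem \ref{thm Xso}, and drop the Kable-style residue reasoning, which is the one route the paper is at pains to explain does not work here.
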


Note that the proof of the above theorem is completely different from the proof of characterisation theorem in the complex case (see Remark \ref{remark Kable} for more on the comparison of the proofs). Here, 
we show   
that the Asai~$\L$-factor of a 
cuspidal~$\ell$-modular representation is equal to~$1$ whenever~$\pi$ is not the unramified twist of a relatively banal representation using Theorem \ref{thm relatively banal distinction lift}, which is the characterisation of relatively banal in terms of supercuspidal lifts.
Then when~$\pi$ is the unramified twist of relatively banal representation, following our paper \cite{KMNagoya}, we get an explicit formula for~$\L_{\As}(\X,\pi)$ 
in Theorem \ref{thm AsaiLfunctioncomputation} from the test vector computation of \cite{AKMSS}, which thanks to the relatively banal 
assumption (more precisely its type theory version) reduces modulo~$\ell$ without vanishing. We then deduce Theorem \ref{Main} 
from this computation, together with the computation of the group of unramified characters $\mu$ of $\G_\so$ such that~$\pi$ is 
$\mu$-distinguished (Corollary \ref{cor cardinality of Xso}). \\

Finally, denoting by $\N$ the unipotent radical of the group of upper triangular matrices in $\G$, ~$\Z_\so$ the centre of~$\G_\so$ and~$\N_{\so}$ the group $\N\cap \G_\so$, the most natural~$\G_\so$-invariant linear form to consider on the Whittaker model of an~$\ell$-modular cuspidal representation~$\pi$ with respect to a distinguished non degenerate character of~$\N$ trivial on $\N_\so$ is the local period 
\[\mathcal{L}_\pi:\W\mapsto \int_{\Z_\so\N_\so\backslash \G_\so} \W(h)dh.\] 
In fact this period plays an essential role in the proof of Theorem \ref{Main} over the field of complex numbers (see Remark \ref{remark Kable}). One of the main differences in the~$\ell$-modular setting is that $\mathcal{L}_\pi$ can be zero even when $\pi$ is distinguished. Not only do we show that it can vanish but we say exactly when it does: 

\begin{theorem}[Theorem \ref{thm nonvanishingperiod}]
Let~$\pi$ be a cuspidal distinguished $\ell$-modular representation of~$\GL_n(\F)$. Then the local period~$\mathcal{L}_\pi$ is nonzero if and only if the following two properties are satisfied:
\begin{enumerate}
\item~$\pi$ is relatively banal.
\item~$\ell$ does not divide $e_\so(\pi)$, in other words if $\widetilde{\pi}$ is a lift of $\pi$, the $\ell$-adic valuation of $n$ is the same as the $\ell$-adic valuation 
of the number of $\ell$-adic unramified characters $\widetilde{\mu}_\so$ of $\G_\so$ such $\widetilde{\pi}$ is $\widetilde{\mu}_\so$-distinguished.
\end{enumerate}
\end{theorem}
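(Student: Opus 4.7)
The plan is to compute the period $\mathcal{L}_\pi$ on an explicit type-theoretic test vector, and to interpret the resulting value via the decomposition of the period integral (over an $\ell$-adic lift $\widetilde\pi$) as a sum indexed by the unramified characters of $\G_\so$ by which $\widetilde\pi$ is distinguished.

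\textbf{From $\mathcal{L}_\pi\neq 0$ to relatively banal.} For cuspidal $\pi$, the Asai integral
\[
I(\X,\W,\Phi)=\int_{\N_\so\backslash\G_\so}\W(g)\Phi((0,\ldots,0,1)g)|\det g|_{\F_\so}^{s}\,dg,\qquad \X=q_\so^{-s},
\]
admits a mirabolic decomposition of the form $I(\X,\W,\Phi)=\Phi(0)\cdot M(\X,\W)(1-\X)^{-1}+J(\X,\W,\Phi)$, with $J$ entire in $\X$ and $M(1,\W)=\mathcal{L}_\pi(\W)$. Hence if $\mathcal{L}_\pi\not\equiv 0$, choosing $\Phi$ with $\Phi(0)\neq 0$ produces an Asai integral with a pole at $\X=1$, and by Theorem~\ref{Main} $\pi$ must be relatively banal. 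This proves the ``only if'' part of condition~(1).

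\textbf{Test-vector computation over a lift and decomposition.} Assume $\pi$ is relatively banal. Pick a cuspidal $\ell$-adic lift $\widetilde\pi$ of $\pi$, let $\widetilde{\mathcal{L}}$ denote its period, and let $\W_0$ be the type-theoretic test vector in the Whittaker model of $\widetilde\pi$ constructed in \cite{AKMSS}. The integral defining $\widetilde{\mathcal{L}}(\W_0)$ decomposes as
\[
\widetilde{\mathcal{L}}(\W_0)=\sum_{\widetilde{\mu}_\so}\widetilde{\mathcal{L}}_{\widetilde{\mu}_\so}(\W_0),
\]
where $\widetilde{\mu}_\so$ ranges over the group $\Xx$ of unramified characters of $\G_\so$ by which $\widetilde\pi$ is distinguished. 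By Corollary~\ref{cor cardinality of Xso} combined with Theorem~\ref{thm relatively banal distinction lift} one has $|\Xx|=e_\so(\pi)$. Following the type-theoretic computation in \cite{KMNagoya} together with the test-vector formulas of \cite{AKMSS}, the relatively banal hypothesis $q_\so^{n/e_\so(\pi)}\not\equiv 1[\ell]$ ensures each $\widetilde{\mathcal{L}}_{\widetilde{\mu}_\so}(\W_0)$ lies in $\Zl^\times$ and all summands have the same reduction modulo $\ell$ (being $\Z_\so\N_\so$-translates of the same local datum). Consequently $\widetilde{\mathcal{L}}(\W_0)\equiv e_\so(\pi)\cdot u\pmod{\ell}$ for a unit $u\in\Fl^\times$, and specialised to the reduction $\W_0'$ of $\W_0$ this gives $\mathcal{L}_\pi(\W_0')\neq 0$ iff $\ell\nmid e_\so(\pi)$. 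To promote this to a statement about $\mathcal{L}_\pi$ as a form, one invokes one-dimensionality of the space of $\G_\so$-invariant linear forms on $\pi$ together with the fact that $\W_0'$ lies off the annihilator of its generator, both of which follow from the open-orbit support of $\W_0$.

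\textbf{Main obstacle.} The heart of the argument lies in establishing the unit-integrality and common mod-$\ell$ reduction of the twisted periods $\widetilde{\mathcal{L}}_{\widetilde{\mu}_\so}(\W_0)$: the first via an $\ell$-adic valuation analysis of the powers of $q_\so$ appearing in the \cite{AKMSS} test-vector formula, with the relatively banal condition ruling out cancellations inside a single summand; the second by viewing $\Xx$ as a torsor on which a natural subgroup of $\G_\so$ acts compatibly, so that the twisted periods are mutual translates and collapse under reduction. Once these are secured, the combinatorial prefactor $e_\so(\pi)$ is the sole source of mod-$\ell$ vanishing, and the theorem follows.
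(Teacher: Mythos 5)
Your proposal has a fatal miscount that breaks the second half of the argument, and the first half rests on a decomposition that does not survive passage to $\Fl$.

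\textbf{The cardinality error.} You assert that the number of unramified characters $\widetilde\mu_\so$ of $\G_\so$ by which the lift $\widetilde\pi$ is distinguished is $|\Xx|=e_\so(\pi)$. But Corollary \ref{cor cardinality of Xso} says $|\X_\so(\widetilde\pi)|=n/e_\so(\widetilde\pi)=n/e_\so(\pi)$ (this is also what the theorem's statement (ii) spells out: the ``number of $\ell$-adic unramified characters'' is $n/e_\so(\pi)$, and $\val_\ell(n)=\val_\ell(n/e_\so(\pi))$ iff $\ell\nmid e_\so(\pi)$). With the corrected cardinality, your argument --- a sum of $|\Xx|$ unit-valued terms with common reduction --- would conclude that $\mathcal{L}_\pi$ vanishes iff $\ell\mid n/e_\so(\pi)$, which is a genuinely different condition from $\ell\mid e_\so(\pi)$: take $n=12$, $e_\so(\pi)=4$, $\ell=3$, where the theorem asserts nonvanishing (given relatively banal) but your criterion would predict vanishing. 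So even granting the rest of the mechanism, the prefactor picked out would be the wrong one. In the paper's proof the $e_\so(\pi)$ comes from evaluating the Laurent polynomial $\frac{1-\X^n}{1-\X^{n/e_\so(\pi)}}=\sum_{j=0}^{e_\so(\pi)-1}\X^{jn/e_\so(\pi)}$ at $\X=1$; its $e_\so(\pi)$ monomials are indexed by determinant-valuation classes modulo $n$ (a fundamental domain for $\Z_\so$), not by the group $\X_\so(\widetilde\pi)$. I see no natural way to split the period integral over $\Z_\so\N_\so\backslash\G_\so$ as a sum indexed by $\X_\so(\widetilde\pi)$ with ``translated'' equal summands, and you give no mechanism for it.

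\textbf{The mirabolic decomposition in the first direction.} You claim $I(\X,\W,\Phi)=\Phi(0)M(\X,\W)(1-\X)^{-1}+J(\X,\W,\Phi)$ with $J$ entire and $M$ regular at $\X=1$ with $M(1,\W)=\mathcal{L}_\pi(\W)$. This presupposes that the pole of $\I_\As$ at $\X=1$ is at most simple, which is exactly what fails over $\Fl$: Theorem \ref{thm Pole of Asai} shows the pole has order $\ell^r$ where $n/e_\so(\pi)=a\ell^r$. Remark \ref{remark Kable} in the paper addresses this explicitly --- Kable's $(1-\X)$ normalisation does not carry over, and the natural candidate $(1-\X^n)$ also fails because $\mathcal{L}_\pi$ can vanish. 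The paper's own route for the ``only if'' of (1) is much more direct: if $\pi$ is not relatively banal it is $|\det(\cdot)|_\so$-distinguished, so by \cite[Proposition B.23]{AKMSS} the inner mirabolic integral is $|\det|_\so$-equivariant and the Iwasawa decomposition $\G=\P\Z\K$ yields $\mathcal{L}_\pi(\W)=(q_\so^n-1)\mathcal{P}_{\pi,|\det(\cdot)|_\so}(\W)$, which vanishes since $q_\so^{n/e_\so(\pi)}\equiv1[\ell]$ forces $q_\so^n\equiv1[\ell]$. For the ``if'' direction, the paper reduces to the single test vector $\W_{\bl}$ (via multiplicity one and $\mathcal{P}_\pi(\W_{\bl})\neq0$) and reads off $\mathcal{L}_\pi(\W_{\bl})=(q_\so-1)(q_\so^{n/e_\so(\pi)}-1)\cdot e_\so(\pi)$ from the explicit Laurent-polynomial identity, whence vanishing is controlled exactly by $\ell\mid e_\so(\pi)$.
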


This theorem is related to the vanishing modulo~$\ell$ of a rather interesting and subtle scalar related, after fixing an isomorphism~$\mathbb{C}\simeq \Ql$, to a quotient of the formal degree of a complex cuspidal representation of a unitary group by the formal degree of its base change to~$\GL_n(\F)$, see Remark \ref{remarklinearforms} for a precise statement.

In light of Theorem \ref{Main}, the role of the Asai~$\L$-factor in the study of distinguished 
representations will be less important in the~$\ell$-modular setting as some~$\ell$-modular distinguished representations have Asai~$\L$-factors equal to~$1$ in the cuspidal case already, and new ideas will be required already for non-relatively banal distinguished cuspidal representations. We will focus on the general case of distinguished irreducible~$\ell$-modular representations restricting to small rank in the paper \cite{KMS}. 
\\

Finally, we mention that this paper heavily relies on the results from \cite{AKMSS} and \cite{Secherre}, and can be seen as a natural continuation of the themes developed in these two papers. In particular our section on lifting distinction for cuspidal representations of finite general linear groups uses the same techniques as \cite{Secherre}, and the statements which we obtain here were known to the author of \cite{Secherre}.

\begin{acknowledgements}
It is a pleasure to thank Vincent S\'echerre for motivating discussions and very useful explanations concerning his paper~\cite{Secherre}.  We thank David Helm and Alberto M\'inguez for their interest and useful conversations.

This work has benefited from support from GDRI \emph{Representation Theory} 2016-2020 and the EPSRC grant EP/R009279/1, and the Heilbronn Institute for Mathematical Research. Parts of the paper were written while the second named author was at the conferences ``On the Langlands Program: Endoscopy and Beyond" held at the IMS of the National University of Singapore in 2018/2019 and ``Automorphic forms, automorphic representations and related topics"  held at the RIMS of the University of Kyoto in 2019. He would like to thank the organisers of both conferences, especially Wee Teck Gan and Shunsuke Yamana, and both institutions for their hospitality.
\end{acknowledgements}

\section{Notation}\label{sec:notation}

Let~$\F/\F_\so$ be a quadratic extension of non-archimedean local fields of odd residual characteristic~$p$.  For any finite extension~$\E/\F_\so$ we let~$|~|_\E$ be the absolute value,~$\val_\E$ the additive valuation,~$\oo_{\E}$ denote the ring of integers of~$\E$, with maximal ideal~$\pp_\E$, residue field~$\kk_\E$, and put~$q_\E=\#\kk_\E$.  We put~$|~|=|~|_\F$,~$\val=\val_\F$,~$\oo=\oo_{\F}$,~$\pp=\pp_\F$,~$\kk=\kk_\F$,~$q=q_\F$,~$|~|_\so=|~|_{\F_\so}$,~$\val_\so=\val_{\F_\so}$,~$\oo_\so=\oo_{\F_\so}$,~$\pp_{\so}=\pp_{\F_\so}$,~$\kk_\so=\kk_{\F_\so}$ and~$q_\so=q_{\F_\so}$.   

We let~$\ell$ denote a prime not equal to~$p$. Set~$\Ql$ to be an algebraic closure of the~$\ell$-adic numbers,~$\Zl$ its ring of integers, and~$\Fl$ its residue field.

Let~$\G$ be the~$\F$-points of a connected reductive group defined over~$\F$ and~$\mathcal{G}$ be the~$\kk$-points of a connected reductive group defined over~$\kk$.   

All representations considered are assumed to be smooth.  We consider representations of~$\G$ and~$\mathcal{G}$ and their subgroups on~$\Ql$ and~$\Fl$-vector spaces and the relations between them.  We let~$\R$ denote either~$\Ql$ or~$\Fl$, so that we can make statements valid in both cases more briefly.  By an~$\R$-representation we mean a representation on an~$\R$-vector space.  

An~$\R$-representation of~$\G$ or~$\mathcal{G}$ is called \emph{cuspidal} if it is irreducible and does not appear as a quotient of any representation parabolically induced from an irreducible representation of a proper Levi subgroup.  It is called \emph{supercuspidal} if it is irreducible and does not appear as a subquotient of any representation parabolically induced from an irreducible representation of a proper Levi subgroup.  Over~$\Ql$ a representation of~$\G$ or~$\mathcal{G}$ is cuspidal if and only if it is supercuspidal, however this is not the case over~$\Fl$, see \cite[III]{Vig96} and \cite{Kurinczuk} for examples of cuspidal non-supercuspidal representations.

\section{Background on integral representations and distinction}

\begin{definition}
Let~$\G$ be a locally profinite group and~$\H$ be a closed subgroup of~$\G$.  Let~$\pi$ be an~$\R$-representation of~$\G$ and~$\chi:\H\rightarrow \R^\times$ be a character.  We say that~$\pi$ is~\emph{$\chi$-distinguished} if~$\Hom_\H(\pi,\chi)\neq 0$.  We say that~$\pi$ is~\emph{distinguished} if it is~$1$-distinguished where~$1$ denotes the trivial character of~$\H$.
\end{definition}

We will mainly consider cases where $\H$ is the group of fixed points $\G^\sigma=\{g\in \G: \sigma(g)=g\}$ of an involution $\sigma$. In this case for any subset $\X\subset \G$, we set $\X^\sigma=\X\cap \G^\sigma$.  

\begin{definition}We call a triple~$(\G,\H,\sigma)$ \emph{an $\F$-symmetric pair} when 
\begin{enumerate}
\item~$\G=\mathbb{G}(\F)$ with~$\mathbb{G}$ a connected reductive group defined over~$\F$, and~$\sigma$ is an involution of~$\mathbb{G}$ defined over~$\F$;
\item~$\H$ is an open subgroup of the group~$\G^\sigma$.  
\end{enumerate} 
\end{definition}

The main symmetric pair of interest in this note will be~$(\GL_n(\F),\GL_n(\F_\so),\sigma)$, where~$\sigma$ is the involution induced from the nontrivial element of~$\Gal(\F/\F_\so)$. Two important basic results concerning this pair, that we shall use later are the following (\cite{FLickerDist}, \cite{PrasadDist} for~$\Ql$-representations, extended to~$\R$-representations in \cite[Theorem 3.1]{Secherre}):

\begin{proposition}\label{proposition p-adic mult 1 and ssduality}
Let $\pi$ be an irreducible $\R$-representation of $\GL_n(\F)$, then \[\dim(\Hom_{\GL_n(\F_\so)}(\pi,\R))\leqslant 1,\] moreover if this dimension 
is equal to one, then $\pi^\vee\simeq \pi^\sigma$.
\end{proposition}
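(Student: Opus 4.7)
The plan is to mirror, in the modular setting, the classical Gelfand--Kazhdan--Flicker argument over $\mathbb{C}$. The assumption $\ell \neq p$ ensures the existence of $\R$-valued Haar measures on $\G$ and $\H$, so the distribution-theoretic machinery carries over once the relevant foundational results on $\R$-valued distributions on $\ell$-spaces are in place, which have been established in \cite{Secherre}.

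First, I would establish the Gelfand pair property for $(\G,\H)$. Letting $\tau$ denote the transpose anti-involution $g \mapsto {}^t g$ on $\G = \GL_n(\F)$, I would show that $\tau$ preserves every $(\H,\H)$-double coset in $\G$, that is $\tau(g) \in \H g \H$ for all $g \in \G$. This is the classical geometric input going back to Flicker, obtained by parametrising $\H \backslash \G / \H$ in terms of invariants of non-degenerate $\sigma$-hermitian forms on $\F^n$ up to $\GL_n(\F_\so)$-equivalence, and checking that $\tau$ acts trivially on these invariants. The calculation is purely about linear algebra over $\F/\F_\so$ and is insensitive to the coefficient ring $\R$.

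Second, I would invoke Bernstein's theorem on biinvariant distributions in the $\R$-setting: the Gelfand pair property forces every $(\H,\H)$-biinvariant $\R$-distribution on $\G$ to be $\tau$-invariant. Combining this with the Gelfand--Kazhdan identification $\pi^\vee \simeq \pi^{\theta}$ (with $\theta(g) = {}^t g^{-1}$), valid for irreducible $\R$-representations of $\GL_n(\F)$, I would produce from any pair of functionals $\ell_1, \ell_2 \in \Hom_\H(\pi, \R)$ an associated pair of biinvariant distributions via matrix coefficients, whose $\tau$-invariance yields a natural isomorphism $\Hom_\H(\pi, \R) \simeq \Hom_\H(\pi^{\vee,\sigma}, \R)$. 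Composing two such functionals gives an $\End_\G(\pi)$-valued pairing, and Schur's lemma for admissible irreducible $\R$-representations (valid for both $\R = \Ql$ and $\R = \Fl$) then forces the pairing to land in $\R$; its non-degeneracy gives multiplicity one, and the existence of a nonzero distinguished functional simultaneously forces $\pi^\vee \simeq \pi^\sigma$.

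The main obstacle is not the conceptual argument, which is essentially the complex proof, but the extension of Bernstein's distribution theory and the Gelfand--Kazhdan theorem to $\R$-coefficients, particularly for $\R = \Fl$, where the classical archimedean-analytic arguments must be replaced by algebraic ones using smooth compactly supported $\R$-valued functions together with the corresponding dual objects on $\ell$-spaces. This foundational work is precisely what is carried out in \cite{Secherre}, and my plan is to build on it rather than redo it.
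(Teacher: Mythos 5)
The paper does not prove this proposition itself; it cites Flicker and Prasad for the complex/$\Ql$ case and Sécherre (Theorem~3.1 of~\cite{Secherre}) for the extension to $\R$-coefficients, and your proposal is essentially a reconstruction of exactly that argument: the Gelfand--Kazhdan--Flicker method via $\R$-valued distributions, relying on the foundational work in \cite{Secherre}. One detail deserves care, however. You take the anti-involution to be plain transpose $\tau(g)={}^tg$, but with that choice the Gelfand--Kazhdan step produces an isomorphism involving $\pi^\vee$ alone; the Galois twist $\sigma$ in the conclusion $\pi^\vee\simeq\pi^\sigma$ does not materialise. The anti-involution adapted to the pair $(\GL_n(\F),\GL_n(\F_\so))$ is the $\sigma$-twisted transpose $\tau(g)=\sigma({}^tg)$, which still fixes $\H$ pointwise, and it is for this $\tau$ that Flicker's geometric lemma $\tau(g)\in\H g\H$ holds and for which $\pi^\tau\simeq(\pi^\vee)^\sigma$, so that Bernstein's criterion delivers both the multiplicity bound and the identity $\pi^\vee\simeq\pi^\sigma$ simultaneously. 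With that correction your outline agrees with the cited proofs.
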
  

Let~$\K$ be a locally profinite group.  An irreducible~$\Ql$-representation~$\pi$ of~$\K$ is called \emph{integral} if it stabilises a~$\Zl$-lattice in its vector space. An integral irreducible~$\Ql$-representation~$\pi$ which stabilises a lattice~$\L$ induces an~$\Fl$-representation on the space $\L\otimes_{\Zl}\Fl$. When $\K$ is either a profinite group or the $F$-points of a connected reductive $\F$-group, the semisimplification~$r_{\ell}(\pi)=$ of $\L\otimes_{\Zl}\Fl$ is independent of the choice of~$\L$ and called the \emph{reduction modulo~$\ell$} of~$\pi$ (\cite[9.6]{Vig96} in the profinite setting where all representations are automatically defined over a finite extension of $\Fl$, or \cite[Theorem 1]{VigIntegral} in the context of reductive groups).  Given an irreducible~$\Fl$-representation~$\overline{\pi}$ of~$\K$, we will call any integral irreducible~$\Ql$-representation~$\pi$ of~$\K$ which satisfies~$r_{\ell}(\pi)=\overline{\pi}$ a \emph{lift} of~$\overline{\pi}$.

We shall see later that distinction of cuspidal representations of~$\G$ does not always lift, i.e. that an~$\ell$-modular cuspidal distinguished representation may have no distinguished lifts. However, we have the following general result which shows that distinction reduces modulo~$\ell$:

\begin{theorem}\label{thm distinction reduces mod ell}
Let~$(\G,\H,\sigma)$ be an $\F$-symmetric pair. Let~$\pi$ be an integral~$\ell$-adic supercuspidal representation if~$\G$, and let~$\chi$ be an integral character of~$\H$. Then 
if~$\pi$ is~$\chi$-distinguished, the representation~$r_{\ell}(\pi)$ is~$r_\ell(\chi)$-distinguished.
\end{theorem}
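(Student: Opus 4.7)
The plan is to produce a $\Zl[\G]$-stable lattice $L$ in the space $V$ of $\pi$ together with a nonzero form $\lambda'\in\Hom_\H(\pi,\chi)$ whose restriction to $L$ takes values in $\Zl$ and meets $\Zl^\times$. The mod $\ell$ reduction $\bar\lambda'\colon L/\ell L\to r_\ell(\chi)$ is then a nonzero $\H$-equivariant form, and taking the smallest $\G$-subrepresentation of $L/\ell L$ on which $\bar\lambda'$ does not vanish produces a simple subquotient, hence an irreducible summand of $r_\ell(\pi)$, which is $r_\ell(\chi)$-distinguished.

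The mechanism is compact induction. Choose an open subgroup $J$ of $\G$ compact modulo the centre and a finite-dimensional integral $\Ql$-representation $\Lambda$ of $J$ with $\pi\cong\cind_J^\G\Lambda$; this is available via Bushnell--Kutzko types when $\G=\GL_n(\F)$, and via the corresponding structure theorem for supercuspidals in any other setting in which the theorem is applied. Fix a $J$-stable $\Zl$-lattice $\Lambda_0\subset\Lambda$ and set $L=\cind_J^\G\Lambda_0$. Mackey theory combined with Frobenius reciprocity identifies
\[\Hom_\H(\pi,\chi)\cong \prod_{x\in\H\backslash\G/J}\Hom_{\H\cap xJx^{-1}}(\Lambda^x,\chi)\]
as a genuine direct product, because the restriction to $\H$ of a compactly induced representation decomposes as a direct sum of compact inductions indexed by double cosets (functions in compact induction have support in only finitely many of them). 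Non-vanishing of the left-hand side forces some component at $x_0$ to be nonzero; in that component $\H\cap x_0Jx_0^{-1}$ is compact modulo the centre and the lattice $\Lambda^{x_0}_0$ is a free $\Zl$-module of finite rank, so $\Hom_{\H\cap x_0Jx_0^{-1}}(\Lambda^{x_0}_0,\Zl\cdot\chi)$ is a nonzero finitely generated $\Zl$-module. Pick an element $\mu_0$ in it that is not divisible by $\ell$ and extend it by zero over the remaining double cosets to produce $\lambda'\in\Hom_\H(\pi,\chi)$ with $\lambda'(L)=\Zl$.

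The reduction $\bar\lambda'\colon L/\ell L\to r_\ell(\chi)$ is a nonzero $\H$-equivariant map, and descending through a $\G$-composition series of $L/\ell L$ we find a simple subquotient, i.e.\ an irreducible summand of $r_\ell(\pi)$, on which $\bar\lambda'$ induces a nonzero $(\H,r_\ell(\chi))$-equivariant form, giving the conclusion. The main obstacle is the first step, namely having a compact induction model $\pi\cong\cind_J^\G\Lambda$: for $\G=\GL_n(\F)$, which is the case relevant to the rest of the paper, it is supplied by Bushnell--Kutzko, and for a fully general $\F$-symmetric pair one invokes the analogous structure theorem when available. Once that ingredient is in hand the remainder of the proof is essentially formal, as the product decomposition of Hom allows one to work inside a single Mackey component and reduce to the elementary integrality of Hom spaces for finite-dimensional integral representations of a compact-mod-centre group.
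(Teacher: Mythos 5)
Your route, via a compact induction model and Mackey theory, is genuinely different from the paper's, and it is correct where it applies; but as written it does not prove the theorem in the stated generality. The paper's proof avoids any structure theorem for supercuspidals: by Kato--Takano (extended by Delorme to a general $\chi$), the generalized matrix coefficients $g\mapsto L(\pi(g)v)$ for $L\in\Hom_\H(\pi,\chi)$ are compactly supported modulo $\Z_\G\H$, so $\pi$ embeds into $\ind_{\Z_\G\H}^\G(\chi)$; intersecting with the $\Zl$-valued functions $\ind_{\Z_\G\H}^\G(\chi,\Zl)$ produces an integral structure $\pi_e$, and the evaluation-at-identity form $f\mapsto f(1_\G)$ is a $\chi$-equivariant functional on this lattice which, after scaling, takes unit values on $\pi_e$ and hence reduces to a nonzero form modulo $\ell$. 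That argument runs for an arbitrary $\F$-symmetric pair, precisely because Kato--Takano's relative-cuspidality theorem does.

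Your argument, in contrast, requires a model $\pi\simeq\cind_J^\G\Lambda$ for $J$ open and compact modulo centre with $\Lambda$ finite-dimensional. For $\G=\GL_n(\F)$ this is Bushnell--Kutzko, and that is the only case the theorem is applied to later in the paper, so for the paper's purposes your proof would suffice. But the theorem is stated for an arbitrary $\F$-symmetric pair, and exhaustion of supercuspidal $\Ql$-representations by compact induction from open compact-mod-centre subgroups is not known for a general connected reductive $\G$ (it is established only under restrictions on $\G$ or on $p$). So, as written, your proof establishes the theorem only for those $\G$ where this structure theorem is available; you half-acknowledge this, but it is a genuine gap relative to the statement as given. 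The remainder of the argument is sound: the direct-product decomposition of $\Hom_\H(\pi,\chi)$ over $\H\backslash\G/J$ is correct because the restriction to $\H$ of a compactly induced representation is indeed a \emph{direct sum} over open double cosets, so $\Hom$ out of it is the product; a nonzero component at some $x_0$ contains, by finite $\Zl$-rank of $\Lambda_0^{x_0}$, a form $\mu_0$ not divisible by $\ell$; its Frobenius-reciprocity image, extended by zero, gives $\lambda'$ with $\lambda'(L)=\Zl$; and descending $\bar\lambda'$ through a $\G$-composition series of $L/\ell L$ yields a nonzero $(\H,r_\ell(\chi))$-equivariant form on a simple subquotient, hence on a summand of $r_\ell(\pi)$.
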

\begin{proof}
Note that~$\chi$ coincides with the central character of~$\pi$ restricted to~$\H$ which is also integral on~$\Z_\G\cap \H$ (where~$\Z_\G$ is the centre of~$\G$) hence we extend it to a character still denoted~$\chi$ to~$\Z_\G \H$:~$\chi(zh)=c_\pi(z)\chi(h)$.  Note that 
$\Hom_{\H}(\pi,\chi)=\Hom_{\Z_\G\H}(\pi,\chi)$. By \cite[Proposition 8.1]{KatoTakano} for~$\chi=1$, extended to general~$\chi$ in  \cite[Theorem 4.4]{Delorme}, for~$L$ a nonzero element of~$\Hom_{\H}(\pi,\chi)$, the map \[v\mapsto (g\mapsto L(\pi(g)v))\] embeds 
$\pi$ as a submodule of~$\ind_{\Z_\G \H}^\G(\chi)$. Now by \cite[Proposition II.3]{VigIntegral},~$\ind_{\Z_\G \H}^\G(\chi,\Zl)$ is an integral structure in~$\ind_{\Z_\G \H}^\G(\chi)$, hence its intersection~$\pi_e$ with~$\pi$ is an integral structure of~$\pi$ by \cite[9.3]{Vig96} (note that Vign\'eras works over a finite extension of $\Fl$, but her results apply here because both $\pi$ and 
$\chi$, hence both $\pi$ and $\ind_{\Z_\G \H}^\G(\chi)$ have $E$-structures by \cite[Section 4]{Vig96}). So~$\pi_e\subset \ind_{Z_\G \H}^\G(\chi,\Zl)$ but the map~$\Lambda:f\mapsto f(1_\G)$ is an element of 
$\Hom_{\H}(\ind_{\Z_\G \H}^\G(\chi,\Zl),\chi)$ which is nonzero on any submodule of~$\ind_{\Z_\G \H}^\G(\chi,\Zl)$, in particular on 
$\pi_e$. Up to multiplying~$\Lambda$ by an appropriate nonzero scalar in~$\Ql$, we can suppose that~$\Lambda(\pi_e)=\Zl$, and~$\Lambda$ induces a nonzero element of~$\Hom_{\H}(\pi_e\otimes \Fl,r_\ell(\chi))$. The result follows.
\end{proof}

\begin{remark}\label{remark finite dist reduces}
If $\K'$ is a closed subgroup of a profinite group $\K$, (smooth) finite dimensional $\Ql$-representations of $\K$ are always integral and the image of a lattice by a nonzero linear form on such a representation is obviously a lattice of $\Ql$, so the reduction modulo $\ell$ of a $(\K',\chi)$-distinguished finite dimensional $\Ql$-representation of $\K$ is $(\K',r_\ell(\chi))$-distinguished.
\end{remark}

\begin{remark}\label{remark reduction vs cuspidality}
The following observation sheds more light on Theorem \ref{thm distinction reduces mod ell} when $\G=\GL_n(\F)$. Let~$\pi$ be an integral supercuspidal~$\Ql$-representation of~$\GL_n(\F)$, then its reduction modulo~$\ell$ is (irreducible and) cuspidal, by \cite[III 4.25]{Vig96}.  This is however not true in general, see \cite{Kurinczuk} for an example of an integral supercuspidal~$\Ql$-representation whose reduction modulo~$\ell$ is reducible. 
\end{remark}

Let~$\K$ be a locally profinite group and~$\K'$ a closed subgroup.  While in general it appears a subtle question to ascertain when the distinction of~$\Fl$-representations of~$\K$ lifts, there is however one elementary case where it does: when the subgroup for which we want to study distinction~$\K'$ is profinite of pro-order prime to~$\ell$.  In this case, an $\ell$-modular finite dimensional  (smooth) representation of~$\K'$ is semisimple and reduction modulo~$\ell$ defines a bijection between the set of isomorphism classes of integral irreducible~$\Ql$-representations of~$\K'$ and the set of isomorphism classes irreducible~$\Fl$-representations of~$\K'$, and we have:

\begin{lemma}\label{lemma finite rel banal dist lifts}
Let~$\K$ be a locally profinite group and~$\K'$ be a compact subgroup of~$\K$.  Suppose that the pro-order of~$\K'$ is prime to~$\ell$.  Let~$\rho$ be an finite dimensional integral~$\Ql$-representation of~$\K$ and~$\chi$ be a character of~$\K'$.  Then~$\rho$ is~$\chi$-distinguished if and only if~$r_{\ell}(\rho)$ is~$r_{\ell}(\chi)$-distinguished.
\end{lemma}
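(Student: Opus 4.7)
The plan is to decompose $\rho|_{\K'}$ into isotypic components and exploit the bijection between irreducible integral $\Ql$-representations and irreducible $\Fl$-representations of $\K'$ recalled just before the statement.

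First I would dispose of the forward direction as a special case of Remark~\ref{remark finite dist reduces}: choose a $\K$-stable $\Zl$-lattice $\L \subset \rho$ and a nonzero $L \in \Hom_{\K'}(\rho,\chi)$; rescale $L$ so that $L(\L) = \Zl$; then the reduction of $L$ modulo $\ell$ is a nonzero element of $\Hom_{\K'}(\L \otimes_{\Zl}\Fl, r_\ell(\chi))$, and since $r_\ell(\chi)$ is irreducible this yields a nonzero element of $\Hom_{\K'}(r_\ell(\rho), r_\ell(\chi))$ via passage to the semisimplification.

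For the converse, I would restrict to $\K'$ and use that the smooth finite-dimensional $\Ql$-representation $\rho|_{\K'}$ is semisimple (since $\K'$ is profinite and the characteristic of $\Ql$ is zero), so that we may write
\[\rho|_{\K'} \simeq \bigoplus_i \sigma_i^{\oplus m_i}\]
with the $\sigma_i$ pairwise non-isomorphic irreducible $\Ql$-representations of $\K'$, automatically finite-dimensional and hence integral. The prime-to-$\ell$ pro-order hypothesis is exactly what makes Maschke's argument go through over $\Fl$ (use a $\Zl$-valued Haar measure on $\K'$), so $\ell$-modular finite-dimensional representations of $\K'$ are also semisimple, and, as recalled in the paper, $r_\ell$ induces a bijection on isomorphism classes of irreducibles. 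It follows that
\[r_\ell(\rho)|_{\K'} \simeq \bigoplus_i r_\ell(\sigma_i)^{\oplus m_i}\]
with the $r_\ell(\sigma_i)$ pairwise non-isomorphic.

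The argument then closes by elementary bookkeeping: $\Hom_{\K'}(\rho,\chi)\neq 0$ amounts to saying that $\chi$ appears among the $\sigma_i$, while $\Hom_{\K'}(r_\ell(\rho), r_\ell(\chi))\neq 0$ amounts to $r_\ell(\chi)$ appearing among the $r_\ell(\sigma_i)$; these two occurrences are equivalent by the aforementioned bijection applied to the one-dimensional (hence irreducible) representation $\chi$. I do not foresee a genuine obstacle here: the prime-to-$\ell$ assumption is precisely what is needed to make both restrictions semisimple with matching isotypic multiplicities, and everything reduces to this book-keeping.
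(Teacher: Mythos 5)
Your proof is correct, and it follows exactly the route the paper intends: the lemma is stated without an explicit proof, immediately after the paragraph recalling semisimplicity of finite-dimensional $\Fl$-representations of $\K'$ and the bijection that $r_\ell$ induces on irreducibles, and your argument is precisely the bookkeeping that those two facts make routine (with the forward direction also obtainable from Remark~\ref{remark finite dist reduces} applied to the profinite group $\K'$).
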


\begin{remark}
If $\K$ is compact modulo centre, an irreducible $\Ql$-representation of $\K$ is always finite dimensional and is integral if and only if its central character is integral. 
\end{remark}

\section{Distinction for finite~$\GL_n$}\label{sec:finite}

For the rest of this section, we set~$\mathcal{G}=\GL_n(\kk)$, where (as before)~$\kk$ denotes a finite field of odd cardinality~$q$.  If~$\kk/\kk_\so$ is a quadratic extension of~$\kk_\so$ we denote by~$\sigma$ the non trivial element of~$\Gal(\kk/\kk_\so)$ 
and set $\mathcal{G}_\so=\GL_n(\kk_\so)$. 

We recall the definitions of~self-dual and~$\sigma$-self-dual representations of~$\mathcal{G}$:
\begin{definition}
\begin{enumerate}
\item Suppose~$\kk/\kk_\so$ is a quadratic extension of finite fields, then a representation~$\rho$ of~$\mathcal{G}$, over~$\Ql$ or~$\Fl$, is called~\emph{$\sigma$-self-dual} if~$\rho^\sigma\simeq\rho^\vee$.
\item A representation~$\rho$ of~$\mathcal{G}$, over~$\Ql$ or~$\Fl$, is called \emph{self-dual} if~$\rho\simeq \rho^\vee$.  
\end{enumerate}
\end{definition}

\subsection{Basic results on distinction}

The following multiplicity one results are \cite[Remark 3.2 with the adhoc modification in the proof of Theorem 3.1, Proposition 6.10 and Remark 6.11]{Secherre}:

\begin{proposition}\label{prop finite multiplicity 1}
Let $\rho$ be an irreducible $\R$-representation of $\mathcal{G}$:
\begin{enumerate}
\item If $\kk/\kk_\so$ is a quadratic extension of finite fields, then $\dim(\Hom_{\mathcal{G}_\so}(\rho,\chi))\leq 1$ 
for any character $\chi$ of $\mathcal{G}_\so$.
\item If $\rho$ is cuspidal and $r$ and $s$ are two nonnegative integers such $r+s=n\geq 2$. Then 
$\dim(\Hom_{(\GL_r\times \GL_s)(\kk)}(\rho,\chi))\leq 1$ for any character $\chi$ of $\GL_r\times \GL_s$ and this dimension is equal to zero if $r$ and $s$ are positive and $r\neq s$.
\end{enumerate}
\end{proposition}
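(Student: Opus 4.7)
The plan is to adapt the proofs of the corresponding $p$-adic statements in \cite[Theorem 3.1, Proposition 6.10, Remark 6.11]{Secherre}: for part (i) a Gelfand--Kazhdan argument for the symmetric pair $(\mathcal{G}, \mathcal{G}_\so)$, and for part (ii) a Mackey-theoretic analysis exploiting the cuspidality of $\rho$. Passing from $\Ql$-coefficients to $\Fl$-coefficients causes no essential difficulty since $\ell \neq p$ makes the averaging idempotent over any $p$-subgroup available in characteristic $\ell$, and $\Fl$ is algebraically closed so Schur's lemma yields $\End_{\mathcal{G}}(\rho) = \Fl$ for any irreducible $\R$-representation $\rho$ of $\mathcal{G}$.

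For part (i), I would introduce the anti-involution $\tau : g \mapsto \sigma({}^t g^{-1})$ on $\mathcal{G}$, and verify by a direct computation on the finite symmetric space that every $(\mathcal{G}_\so, \mathcal{G}_\so)$-double coset of $\mathcal{G}$ is $\tau$-stable. The Gelfand--Kazhdan theorem for finite $\GL_n$ then identifies $\rho^\tau$ with $(\rho^\sigma)^\vee$ for every irreducible $\rho$. The standard symmetry argument then proceeds: any bilinear pairing constructed from a pair $L_1, L_2 \in \Hom_{\mathcal{G}_\so}(\rho, \chi)$ combined with a nonzero element of $\Hom_{\mathcal{G}_\so}((\rho^\sigma)^\vee, \chi^{-1})$ produced by $\tau$-transport is $\tau$-symmetric, and Schur's lemma forces $L_1$ and $L_2$ to be proportional, giving $\dim \Hom_{\mathcal{G}_\so}(\rho, \chi) \leqslant 1$.

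For part (ii), assume $\rho$ is cuspidal. By Frobenius reciprocity, $\Hom_{\GL_r(\kk) \times \GL_s(\kk)}(\rho, \chi) = \Hom_{\mathcal{G}}(\rho, \Ind_{\GL_r(\kk) \times \GL_s(\kk)}^{\mathcal{G}} \chi)$. Writing $\mathcal{P} = (\GL_r \times \GL_s) \ltimes \mathcal{U}$ for the standard block upper-triangular parabolic with abelian unipotent radical $\mathcal{U} \cong \Mat_{r \times s}(\kk)$, I would factor this as $\Ind_\mathcal{P}^\mathcal{G} \Ind_{\GL_r \times \GL_s}^\mathcal{P} \chi$ and decompose the inner piece according to the $(\GL_r \times \GL_s)$-orbits on the character group $\widehat{\mathcal{U}}$, which are parameterised by rank $0 \leqslant k \leqslant \min(r, s)$. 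The rank-$0$ summand is the standard parabolic induction from $\chi$ and is killed by cuspidality (in the form $\rho^\mathcal{U} = 0$, valid over $\Fl$ because $\ell \neq p$). For each $k \geqslant 1$, the stabiliser in $\GL_r \times \GL_s$ of a rank-$k$ character is a parabolic subgroup whose Levi contains a $\GL_{r-k} \times \GL_{s-k}$ factor; an iterative Mackey reduction, using cuspidality after stripping off the rank-$k$ block, kills all these summands except $k = \min(r, s)$, and at the top stratum the residual structure yields dimension one when $r = s$ (the diagonal $\GL_r$ case) and zero when $r \neq s$ (both positive).

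The main obstacle is the iterative bookkeeping in part (ii), which must couple the rank-stratification of $\widehat{\mathcal{U}}$ with repeated invocations of cuspidality at shrinking Levi subgroups, and careful tracking of which summand survives. Since Sécherre's analysis in \cite[Proposition 6.10 and Remark 6.11]{Secherre} treats essentially the same combinatorics in the $p$-adic case, I would follow it closely, replacing its semisimplicity and Haar-measure inputs by the $p$-coprime averaging trick available because $\ell \neq p$.
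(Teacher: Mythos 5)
The paper itself gives no proof of this proposition: it is stated as a citation of S\'echerre (Remark~3.2 with the ad hoc modification in the proof of Theorem~3.1, Proposition~6.10 and Remark~6.11), so your plan of reconstructing S\'echerre's arguments is, by definition, the same approach. Your high-level outline is sound, and the remarks about $\ell \neq p$ (unipotent averaging, Schur's lemma over $\Fl$) correctly identify why the passage from $\Ql$ to $\Fl$ is harmless here.

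One thing worth flagging in your sketch of part (ii). After the rank stratification of $\widehat{\mathcal{U}}$ and the repeated use of $\rho^{\mathcal{V}}=0$ for larger unipotent radicals $\mathcal{V}$, what survives at the top stratum when $r=s=m$ is precisely $\Hom_{\Delta\GL_m \ltimes \mathcal{U}}\bigl(\rho,\,\chi'\otimes\psi_m\bigr)$ with $\psi_m$ nondegenerate, i.e.\ a Shalika-model space. Your proposal asserts that "the residual structure yields dimension one" at this stratum, but that bound $\leqslant 1$ is a genuine additional multiplicity-one input (a finite Shalika/linear-model uniqueness), not something that falls out of the orbit bookkeeping by itself; in S\'echerre's treatment this is handled as a separate step. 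This is not a wrong approach, but you should be aware that the top stratum requires its own argument, whereas the vanishing of all lower strata (and of the top stratum when $r\neq s$, both positive) is indeed a direct consequence of cuspidality in the form $\rho^{\mathcal{V}}=0$ for the appropriate unipotent radicals.
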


The final goal of this section is to understand when a cuspidal~$\Fl$-representation of a finite general linear group which is distinguished by a maximal Levi subgroup or by a Galois involution has a lift which does not share the same distinction property.

The connection between~$(\sigma)$-self-dual representations and distinction comes from:

\begin{lemma}\label{lemma finite distinction vs s(s)duality}
\begin{enumerate}
\item\label{lemma14ii}  A~$\GL_{n}(\kk_\so)$-distinguished irreducible~$\R$-representation of $\GL_n(\kk)$ is~$\sigma$-self-dual.  Moreover if~$\rho$ is supercuspidal, we have an equivalence:~$\rho$ is $\sigma$~self-dual if and only if it is~$\GL_{n}(\kk_\so)$-distinguished.
\item\label{lemma14i} A~supercuspidal representation of $\GL_n(\kk)$ is~self-dual if and only if either $n=1$ and it a quadratic character, or if $n$ is even and it is~$(\GL_{n/2}\times\GL_{n/2})(\kk)$-distinguished.
\end{enumerate}
\end{lemma}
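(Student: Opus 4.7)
I would prove (i) and (ii) by distinct but parallel strategies. For (i), the first assertion is the finite analogue of the $p$-adic statement in Proposition~\ref{proposition p-adic mult 1 and ssduality}; the second is the harder equivalence in the supercuspidal case, for which I would pass to Green's parametrization, which also handles (ii).

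First, for the implication ``distinction $\Rightarrow$ $\sigma$-self-duality'' in (i): given a nonzero $L\in\Hom_{\mathcal{G}_\so}(\rho,\R)$, $L$ produces via Frobenius reciprocity an embedding $\rho\hookrightarrow\Ind_{\mathcal{G}_\so}^{\mathcal{G}}(\R)$. The key input is the classical result (due to Flicker in the $p$-adic setting and treated for finite fields analogously, e.g.\ in \cite{Secherre}) that the anti-involution $\theta(g)=\sigma(g)^{-1}$ of $\mathcal{G}$ preserves each $\mathcal{G}_\so$-double coset. This endows the space of bi-$\mathcal{G}_\so$-invariant functions with an involution acting by the identity, which not only yields the commutativity of the Hecke algebra $\mathcal{H}(\mathcal{G},\mathcal{G}_\so)$ (recovering Proposition~\ref{prop finite multiplicity 1}(i)) but, applied to $L$ together with the canonical pairing $\rho\otimes\rho^\vee\to\R$, identifies $\rho^{\vee}$ with $\rho^\sigma$ as irreducible constituents of $\Ind_{\mathcal{G}_\so}^{\mathcal{G}}(\R)$, giving $\rho^\vee\simeq\rho^\sigma$.

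For the supercuspidal converse in (i) and both directions of (ii), I would pass to the Green/Deligne--Lusztig parametrization: irreducible supercuspidal $\R$-representations of $\GL_n(\kk)$ correspond bijectively to $\Gal(\kk_n/\kk)$-orbits of regular characters $\theta\colon\kk_n^\times\to\R^\times$, where $\kk_n/\kk$ is the degree-$n$ extension and regularity means no nontrivial Galois conjugate of $\theta$ equals $\theta$. Under this dictionary, $\mathcal{G}_\so$-distinction of $\rho$ becomes triviality of $\theta$ on $\kk_{n,\so}^\times\subset\kk_n^\times$ (where $\kk_{n,\so}$ is the degree-$n$ extension of $\kk_\so$); $(\GL_{n/2}\times\GL_{n/2})(\kk)$-distinction becomes triviality of $\theta$ on $\kk_{n/2}^\times\subset\kk_n^\times$; $\sigma$-self-duality becomes ``$\theta^{-1}$ is Galois-conjugate to $\theta\circ\sigma_n$'' (with $\sigma_n$ the canonical extension of $\sigma$ to $\kk_n$); and self-duality becomes ``$\theta^{-1}$ is Galois-conjugate to $\theta$''. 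The equivalences of the lemma then reduce to elementary Galois manipulations exploiting regularity: for instance in (ii), $\theta^{-1}=\theta\circ\mathrm{Frob}^j$ combined with regularity forces $\mathrm{Frob}^{2j}$ to fix $\theta$, whence $2j\equiv 0\pmod n$ and $j\in\{0,n/2\}$, giving either $n=1$ with $\theta^2=1$, or $n$ even with $\theta|_{\kk_{n/2}^\times}=1$.

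The main obstacle is carrying out these parametrization-based arguments uniformly for $\R=\Fl$. Over $\Ql$ everything reduces to the classical results of Green, Gow, Flicker, and Prasad. Over $\Fl$, my preferred route is to deduce each equivalence from its $\Ql$-analogue by lifting: a regular $\Fl^\times$-valued character of $\kk_n^\times$ lifts canonically (via Teichm\"uller) to a regular $\Zl^\times$-valued character with the same regularity, and the various distinction conditions transfer between a supercuspidal $\Ql$-lift and its reduction modulo~$\ell$ via Theorem~\ref{thm distinction reduces mod ell}, Remark~\ref{remark finite dist reduces}, and Lemma~\ref{lemma finite rel banal dist lifts} applied to finite $\ell'$-subgroups, once one verifies compatibility of the Green parametrization with reduction modulo~$\ell$. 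This essentially follows the strategy of \cite{Secherre}.
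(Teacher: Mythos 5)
The paper's proof of this lemma is purely a citation to S\'echerre; your proposal instead sketches the underlying mechanism, which, as you note yourself, follows S\'echerre's methodology (Green parametrisation, reduction to $\Ql$, descent of distinction). So it is best understood as an expanded account of the same route rather than an alternative one. That said, there are two real issues with the sketch.

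First, the tools you cite for the mod-$\ell$ transfer do not apply in the form you invoke them. Theorem~\ref{thm distinction reduces mod ell} is formulated for symmetric pairs of $p$-adic groups (reductive groups over $\F$), not finite reductive groups, so it is not the relevant statement here. Lemma~\ref{lemma finite rel banal dist lifts} requires the distinguishing subgroup $\K'$ to have pro-order prime to $\ell$, and $\GL_n(\kk_\so)$ (resp.\ $\GL_{n/2}(\kk)\times\GL_{n/2}(\kk)$) fails this whenever $\ell\mid\#\GL_n(\kk_\so)$, which is the only interesting case; the phrase ``applied to finite $\ell'$-subgroups'' does not resolve this, since those subgroups are not what you are distinguishing by. The correct tool is Remark~\ref{remark finite dist reduces}, which gives one-sided descent of distinction from a finite-dimensional integral $\Ql$-representation to its reduction.

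Second, and more substantively, you write that ``the various distinction conditions transfer between a supercuspidal $\Ql$-lift and its reduction modulo $\ell$,'' but distinction does not transfer both ways: it descends, but it need not lift, as the paper itself establishes in Propositions~\ref{Propsigmaself-dualcases}(ii)(b) and \ref{Prop always non ssdual lift when l=2}. Your argument in fact only needs the descent direction (Teichm\"uller lift $\rho(\theta_r)$ is $\sigma$-self-dual, hence distinguished over $\Ql$ by Gow/Prasad, hence its reduction is distinguished; the converse direction over $\Fl$ comes directly from the Gelfand-pair/anti-involution argument, not from lifting), but the way you phrase it suggests a symmetric transfer that is false. You should also be explicit that the ``dictionary'' entries equating $\GL_n(\kk_\so)$-distinction with triviality of $\theta$ on $\kk_{n,\so}^\times$ (resp.\ $(\GL_{n/2}\times\GL_{n/2})(\kk)$-distinction with triviality on $\kk_{n/2}^\times$) are exactly the content of the theorems of Gow and Prasad, not formal consequences of Green's character formula; and that the $n$-odd reduction in~(i) and the $n=1$/$n$-even dichotomy in~(ii) require the parity observations of Lemma~\ref{lemma existence of s or ssdual ell-adic scusp} before the Galois manipulation on $\tau$ (with $\tau^2\in\Gal(\ll/\kk)_{\theta_r}$ forcing $\tau\in\Gal(\ll/\kk)_{\theta_r}$) goes through.
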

\begin{proof}
The first assertion of (i) follows from \cite[Remark 3.2]{Secherre}, and the second from \cite[Lemma 8.3]{Secherre}. The second assertion follows from \cite[Lemmas 7.1 and 7.3]{Secherre}.
\end{proof}

\subsection{Self-dual and~$\sigma$-self-dual cuspidal representations via the Green--Dipper--James parametrisation}\label{section DJ param}
In this subsection either~$\kk$ is an arbitrary finite field and we consider self-dual representations of~$\GL_n(\kk)$ or~$\kk/\kk_\so$ is quadratic and we consider~$\sigma$-self-dual representations of~$\GL_n(\kk)$ where~$\langle\sigma\rangle=\Gal(\kk/\kk_\so)$.

Let~$\ll/\kk$ be a degree~$n$ extension of~$\kk$.  A character~$\theta:\ll^\times\rightarrow \Ql^\times$ is called \emph{$\kk$-regular} if~$\#\{\theta^\tau:\tau\in\Gal(\ll/\kk)\}=n$, i.e. the orbit of~$\theta$ under~$\Gal(\ll/\kk)$ has maximal sise. By \cite{Green}, there is a surjective map
\begin{align*}
\{\kk\text{-regular characters of }\ll^\times\rightarrow\Ql^\times\}&\rightarrow \{\text{supercuspidal~$\Ql$-representations of }\mathcal{G}\}/\simeq\\
\theta&\mapsto \rho(\theta),
\end{align*}
The character formula given in \cite{Green} also implies:
\begin{enumerate}
\item Two such cuspidal representations~$\rho(\theta)$ and~$\rho(\theta')$ are isomorphic if and only if there exists~$\tau\in\Gal(\ll/\kk)$ such that~$\theta'=\theta^\tau$.  
\item The dual~$\rho(\theta)^\vee$ is isomorphic to~$\rho(\theta^{-1})$.
\item If~$\kk/\kk'$ is a finite extension and~$\tau\in \Gal(\ll/\kk')$, we have~$\rho(\theta^\tau)\simeq\rho(\theta)^\tau$. 
\end{enumerate}  

The following is well-known, and a similar proof to ours can be found in \cite[Lemmas 7.1 \& 8.1]{Secherre} in the greater generality of supercuspidal~$\R$-representations, we provide a proof as a warm-up:

\begin{lemma}\label{lemma existence of s or ssdual ell-adic scusp}
\begin{enumerate}
\item If there exists a~$\sigma$-self-dual supercuspidal~$\Ql$-rep\-resentation of~$\mathcal{G}$, then~$n$ is odd.
\item If there exists a self-dual supercuspidal~$\Ql$-representation of~$\mathcal{G}$, then~$n$ is either one or even.
\end{enumerate}
\end{lemma}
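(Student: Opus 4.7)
The plan is to convert each duality condition on $\rho(\theta)$ into an equation on the $\kk$-regular character $\theta$ of $\ll^\times$ via the three listed properties of the Green parametrisation, and then read off the parity constraint from the cyclic structure of $\Gal(\ll/\kk_\so)$.

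For (i), first fix a lift $\tilde\sigma\in\Gal(\ll/\kk_\so)$ of $\sigma\in\Gal(\kk/\kk_\so)$; such a lift exists because $\ll/\kk_\so$ is a Galois extension of degree $2n$ with $\Gal(\ll/\kk)$ as the unique index~$2$ subgroup. Since $\tilde\sigma$ and $\sigma$ induce the same action on matrix entries of elements of $\mathcal{G}=\GL_n(\kk)$, property (iii) gives $\rho(\theta)^\sigma\simeq\rho(\theta^{\tilde\sigma})$, while property (ii) gives $\rho(\theta)^\vee\simeq\rho(\theta^{-1})$. Combined with property (i), $\sigma$-self-duality of $\rho(\theta)$ becomes the condition that $\theta^{\tilde\sigma\tau}=\theta^{-1}$ for some $\tau\in\Gal(\ll/\kk)$. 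Setting $\alpha=\tilde\sigma\tau\in\Gal(\ll/\kk_\so)$, we obtain $\theta^{\alpha}=\theta^{-1}$ and hence $\theta^{\alpha^{2}}=\theta$; but $\alpha^{2}\in\Gal(\ll/\kk)$ stabilises $\theta$, so $\kk$-regularity forces $\alpha^{2}=1$. Since $\alpha$ projects to $\sigma\neq 1$ in $\Gal(\kk/\kk_\so)$, we also have $\alpha\notin\Gal(\ll/\kk)$. In the cyclic group $\Gal(\ll/\kk_\so)\simeq\mathbb{Z}/2n\mathbb{Z}$ the only element of order~$2$ is the class of $n$, and this class lies outside the index~$2$ subgroup $\Gal(\ll/\kk)$ if and only if $n$ is odd, giving (i).

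For (ii), the analogous argument shows that self-duality of $\rho(\theta)$ amounts to $\theta^{\tau}=\theta^{-1}$ for some $\tau\in\Gal(\ll/\kk)$, and $\kk$-regularity again forces $\tau^{2}=1$. The cyclic group $\Gal(\ll/\kk)\simeq\mathbb{Z}/n\mathbb{Z}$ has only the identity and (when $n$ is even) the class of $n/2$ as elements of order dividing~$2$. The case $\tau=n/2$ forces $n$ to be even. The case $\tau=1$ gives $\theta^{2}=1$; since $q$ is odd, the quotient $\ll^\times/(\ll^\times)^{2}$ has order~$2$, so only two characters of $\ll^\times$ satisfy $\theta^{2}=1$, and both are $\Gal(\ll/\kk)$-invariant, so by regularity of $\theta$ the group $\Gal(\ll/\kk)$ must be trivial, i.e.\ $n=1$.

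No step is genuinely hard; the principal obstacle is essentially bookkeeping, namely correctly translating $\sigma$-self-duality into a condition on $\theta$ through the choice of lift $\tilde\sigma$ and tracking how regularity eliminates nontrivial stabilisers. One must also remember to invoke the standing odd-$q$ hypothesis in the $\tau=1$ case of (ii); without it the quadratic character count could change and the parity conclusion would not follow.
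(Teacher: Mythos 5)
Your proof is correct and follows essentially the same route as the paper: translate each duality condition into the equation $\theta^{\alpha}=\theta^{-1}$ via Green's parametrisation, use $\kk$-regularity to force $\alpha^2=1$, and then read off the parity constraint from the structure of the cyclic Galois groups. Your element $\alpha=\tilde\sigma\tau$ is, up to the abelian commutativity of $\Gal(\ll/\kk_\so)$ and a relabelling $\tau\mapsto\tau^{-1}$, exactly the paper's $\tau^{-1}\circ\widetilde\sigma$, and the handling of the $\tau=1$ subcase in (ii) via the quadratic-character count matches as well.
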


\begin{proof}
\begin{enumerate}
\item Suppose that~$\rho$ is a~$\sigma$-self-dual cuspidal~$\Ql$-representation, and write~$\rho=\rho(\theta)$ for a $\kk$-regular character~$\theta$.  Choose an extension of~$\sigma$ to~$\widetilde{\sigma}\in\Gal(\ll/\kk_\so)$.
Then as~$\rho(\theta^{-1})\simeq\rho(\theta)^\vee\simeq\rho(\theta)^\sigma$, necessarily~$\theta^{\widetilde{\sigma}}=(\theta^{-1})^\tau$ for some~$\tau\in \Gal(\ll/\kk)$. Hence~$(\tau^{-1}\circ \widetilde{\sigma})^2$ fixes~$\theta$, so it is~$1$ as~$\theta$ is regular. This implies that~$\tau^{-1}\circ \widetilde{\sigma}$ is a~$\kk_\so$-linear involution of~$\ll$ which extends~$\sigma$. However the cyclic group~$\Gal(\ll/\kk_\so)$ contains a unique element of order~$2$. If~$n$ was even,~$\tau^{-1}\circ \widetilde{\sigma}$ would belong to~$\Gal(\ll/\kk)$ and this is absurd as it extends~$\sigma$ which is not~$\kk$-linear.
\item Suppose that~$\rho$ is a self-dual cuspidal~$\Ql$-representation, and write~$\rho=\rho(\theta)$ for a $\kk$-regular character~$\theta$. In this case, reasoning as before, necessarily~$\theta=\tau(\theta^{-1})$ for some~$\tau\in \Gal(\ll/\kk)$ and it follows that~$\tau^2=1$.  Either~$\tau=1$ hence~$\theta^2=1$, but there is a unique non-trivial quadratic character of 
$\ll^\times$ which is thus fixed by all~$\tau\in \Gal(\ll/\kk)$ and the trivial character of $\ll^\times$ is also $\Gal(\ll/\kk)$-invariant, so we deduce that~$n=1$ as~$\theta$ is regular.  Or~$\tau$ has order~$2$ hence~$n=\#\Gal(\ll/\kk)$ is even.
\end{enumerate}
\end{proof}

We now recall the classification of cuspidal~$\Fl$-representations of James \cite{James}.  We have a surjective map
\begin{align*}
\{\text{supercuspidal~$\Ql$-representations of }\mathcal{G}\}/\simeq&\rightarrow \{\text{cuspidal~$\Fl$-representations of }\mathcal{G}\}/\simeq\\
\rho(\theta)&\mapsto \overline{\rho(\theta)}
\end{align*}
given by reduction modulo~$\ell$.  

Given a character~$\theta:\ll^\times\rightarrow\Ql^\times$ we can uniquely write~$\theta=\theta_{r}\theta_s$ with~$\theta_{r}$ of order prime to~$\ell$ and~$\theta_s$ of order a power of~$\ell$. The parametrisation of James enjoys the following properties:
\begin{enumerate}
\item Two supercuspidal~$\Ql$-representations~$\rho(\theta),\rho(\theta')$ have isomorphic reductions modulo~$\ell$ if and only if there exists~$\tau\in\Gal(\ll/\kk)$ such that~$\theta'_{r}=\theta_{r}^\tau$.  
\item~$\overline{\rho(\theta)}$ is supercuspidal if and only if~$\theta_r$ is regular.
\end{enumerate}

\subsection{$\sigma$-self-dual lifts of cuspidal~$\Fl$-representations}\label{section ssdual finite lifts}
We now specialise to the case~$\kk/\kk_\so$ is quadratic. Write~$\Gamma=\Hom(\ll^\times,\Ql^\times)$, then~$\Gamma=\Gamma_s\times\Gamma_r$ where~$\Gamma_s$ consists of the characters of~$\ell$-power order, and~$\Gamma_{r}$ consists of the characters with order prime to~$\ell$.  

We study~$\sigma$-self-dual lifts of cuspidal~$\Fl$-representations, and when~$n$ is even there are no~$\sigma$-self-dual supercuspidal~$\Ql$-representations by Lemma \ref{lemma existence of s or ssdual ell-adic scusp}.  Hence without loss of generality, we can assume that~$n$ is odd. Moreover as reduction modulo~$\ell$ commutes with taking contragredients and with the action of~$\sigma$, this implies that when the cuspidal representation~$\overline{\rho}$ of~$\mathcal{G}$ is not~$\sigma$-self-dual, it has no 
$\sigma$-self-dual lifts, so we suppose that it is from now on.

For~$\gamma\in\Gamma$ we set~$\Gal(\ll/\kk)_\gamma=\{\tau\in\Gal(\ll/\kk):\gamma^\tau=\gamma\}$.  Let~$\theta\in\Gamma$, we can decompose~$\theta=\theta_r\theta_s$, and we have~$\Gal(\ll/\kk)_\theta=\Gal(\ll/\kk)_{\theta_r}\cap \Gal(\ll/\kk)_{\theta_s}$. In particular,~$\theta$ is regular if and only if~$\Gal(\ll/\kk)_{\theta_r}\cap \Gal(\ll/\kk)_{\theta_s}=\{1\}.$ 

Let~$\ll_\so$, be the unique subextension of~$\ll/\kk_\so$ of degree~$n$ as an extension of~$\kk_\so$ and put~$\Gamma_\so=\Hom(\ll_\so^\times,\Ql^\times)$. We have an embedding
\begin{align*}
i:\Gamma_\so\hookrightarrow\Gamma,\qquad i:\gamma\mapsto \gamma\circ \N_{\ll/\ll_\so},
\end{align*}
by surjectivity of the norm.  Hence~$\Gamma^+=i(\Gamma_\so)$ is unique subgroup of the cyclic group~$\Gamma$ of order~$q_\so^n-1$. Write~$\widetilde{\sigma}$ for the unique involution in~$\Gal(\ll/\kk_\so)$, which extends~$\sigma$ (as~$n$ is odd).   By Hilbert's theorem 90, we have
\[\Gamma^+=\{\gamma\in \Gamma:\gamma^{\widetilde{\sigma}}=\gamma\}.\]
On the other hand, the unique subgroup of the cyclic group~$\Gamma$ of order~$q_\so^n+1$ is~
\begin{align*}
\Gamma^-&=\{\gamma \in\Gamma:\gamma\circ \N_{\ll/\ll_\so}=1\}=\{\gamma\in\Gamma:\gamma^{\widetilde{\sigma}}=\gamma^{-1}\},
\end{align*}
as the norm map is surjective.  Notice that $(q_\so^n+1,q_\so^n-1)=2$ because~$q$ is odd, so~$\Gamma^+\cap\Gamma^-=\{1,\eta\}$, where~$\eta$ denotes the unique quadratic character in~$\Gamma$. Moreover if~$\ell$ is odd:
\begin{enumerate}
\item  If~$\ell\mid q_\so^{n}-1$, then~$\Gamma_s\subseteq\Gamma^+$;
\item If~$\ell\mid q_\so^{n}+1$, then~$\Gamma_s\subseteq \Gamma^-$.
\end{enumerate}

Before giving the full solution of the lifting~$\sigma$-self-duality for~$\ell$-modular cuspidal representations, 
we characterise~$\ell$-modular cuspidal~$\sigma$-self-duality in terms of the Dipper and James parametrisation. 

\begin{proposition}\label{characterisation ssduality}
Let~$\overline{\rho}$ be a cuspidal representation of~$\mathcal{G}$ and suppose that~$n$ and~$\ell$ are odd, then~$\overline{\rho}$ is~$\sigma$-self-dual 
if and only if~$\theta_r^{\widetilde{\sigma}}=\theta_r^{-1}$.
\end{proposition}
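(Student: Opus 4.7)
My strategy is to translate both conditions through the Green--Dipper--James parametrisation and reduce to a short piece of cyclic-group arithmetic. Write $\overline{\rho}\simeq\overline{\rho(\theta)}$ for some $\kk$-regular character $\theta:\ll^\times\to\Ql^\times$, and let $\widetilde{\sigma}\in\Gal(\ll/\kk_\so)$ be the unique involution extending $\sigma$ (it exists because $n$ is odd). Green's properties recalled in Section \ref{section DJ param} give $\rho(\theta)^\vee\simeq\rho(\theta^{-1})$ and $\rho(\theta)^\sigma\simeq\rho(\theta^{\widetilde{\sigma}})$, and both $(\ )^\vee$ and $(\ )^\sigma$ commute with reduction modulo $\ell$, so
\[
\overline{\rho}^\sigma\simeq\overline{\rho(\theta^{\widetilde{\sigma}})},\qquad \overline{\rho}^\vee\simeq\overline{\rho(\theta^{-1})}.
\]

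Applying property (1) of the James parametrisation, $\overline{\rho}$ is $\sigma$-self-dual if and only if $(\theta^{\widetilde{\sigma}})_r=((\theta^{-1})_r)^\tau$ for some $\tau\in\Gal(\ll/\kk)$. Since both the Galois action and inversion preserve the decomposition $\theta=\theta_r\theta_s$ (neither alters the order of a character), this equation becomes
\[
\theta_r^{\widetilde{\sigma}}=\theta_r^{-\tau}\qquad\text{for some }\tau\in\Gal(\ll/\kk).
\]
The ``if'' direction of the proposition is immediate (take $\tau=1$); the content lies in showing that any such $\tau$ may be replaced by the identity without disturbing the equation.

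To handle the remaining direction, rewrite the displayed equation as $\theta_r^{\widetilde{\sigma}\tau^{-1}}=\theta_r^{-1}$ and apply $\widetilde{\sigma}\tau^{-1}$ once more, using that inversion commutes with the Galois action, to obtain $\theta_r^{(\widetilde{\sigma}\tau^{-1})^2}=\theta_r$. Because $\Gal(\ll/\kk_\so)$ is cyclic hence abelian and $\widetilde{\sigma}^2=1$, we have $(\widetilde{\sigma}\tau^{-1})^2=\tau^{-2}$, so $\tau^2\in\Gal(\ll/\kk)_{\theta_r}$. Now $\Gal(\ll/\kk)$ is cyclic of odd order $n$, so squaring is a bijection on it; consequently $\tau$ and $\tau^2$ generate the same cyclic subgroup and thus $\tau\in\Gal(\ll/\kk)_{\theta_r}$. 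Hence $\theta_r^{-\tau}=\theta_r^{-1}$, and the desired equation $\theta_r^{\widetilde{\sigma}}=\theta_r^{-1}$ follows. The only non-formal step is this last cyclic-group calculation using oddness of $n$; the hypothesis that $\ell$ is odd plays no role in this equivalence itself and is a standing assumption from the surrounding theory.
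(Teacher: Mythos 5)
Your argument is correct and is essentially the same as the paper's: both proofs reduce $\sigma$-self-duality of $\overline{\rho}$ to the existence of $\tau\in\Gal(\ll/\kk)$ with $\theta_r^{\widetilde{\sigma}\tau}=\theta_r^{-1}$ (you write this as $\theta_r^{\widetilde{\sigma}}=\theta_r^{-\tau}$, which is the same condition after replacing $\tau$ by its inverse), then show $\tau^2\in\Gal(\ll/\kk)_{\theta_r}$ by applying the relation twice, and conclude $\tau\in\Gal(\ll/\kk)_{\theta_r}$ because the order of $\tau$ is odd. Your version simply spells out the Green/James translation steps more explicitly; your side remark that the oddness of $\ell$ is not used in this particular equivalence is also correct.
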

\begin{proof}
Write~$\overline{\rho}=\overline{\rho(\theta)}$ for a $\kk$-regular character~$\theta$, and let~$\widetilde{\sigma}\in\Gal(\ll/\kk_\so)$ be the unique involution extending~$\sigma$.  One implication is obvious, for the other we thus suppose that~$\overline{\rho(\theta)}$ is~$\sigma$-self-dual. Then there 
exists~$\tau\in \Gal(\ll/\kk)$ such that~$\theta_r^{\widetilde{\sigma}\tau}=\theta_r^{-1}$. This implies that~$\tau^2=(\widetilde{\sigma}\tau)^2$ belongs to~$\Gal(\ll/\kk)_{\theta_r}$. On the other hand the order 
of~$\tau$ is odd because $n$ is, hence~$\tau$ as well belongs to~$\Gal(\ll/\kk)_{\theta_r}$, so $\theta_r^{\widetilde{\sigma}}=\theta_r^{-1}$.
\end{proof}

We have the following complete result when~$\ell$ is odd.

\begin{proposition}\label{Propsigmaself-dualcases}
Assume that~$n$ and~$\ell$ are odd. Let~$\overline{\rho}$ be a~$\sigma$-self-dual cuspidal~$\Fl$-representation of $\mathcal{G}$.
\begin{enumerate} 
\item\label{Prop18parti} Suppose that~$\ell$ is prime with~$q^{n}-1$. Then the unique supercuspidal lift of~$\overline{\rho}$ is~$\sigma$-self-dual.
\item\label{Prop18partii} Suppose that~$\ell\mid q_\so^{n}-1$. 
\begin{enumerate}
\item  If~$\overline{\rho}$ is supercuspidal and~$\ell^a$ is the highest power of~$\ell$ dividing~$q^n-1$, then there is a unique~$\sigma$-self-dual supercuspidal lift amongst the~$\ell^a$ supercuspidal lifts of~$\overline{\rho}$.  In terms, of Green's parameterisation of supercuspidal~$\Ql$-representations, if~$\rho(\theta)$ is a lift of~$\overline{\rho}$ then~$\rho(\theta_r)$ is the unique~$\sigma$-self-dual supercuspidal lift of~$\overline{\rho}$.
\item If~$\overline{\rho}$ is cuspidal non-supercuspidal, then none of its supercuspidal lifts are~$\sigma$-self-dual.
\end{enumerate}
\item\label{Prop18partiii} Suppose that~$\ell\mid q_\so^{n}+1$. Then all supercuspidal lifts of~$\overline{\rho}$ are~$\sigma$-self-dual.
\end{enumerate}
\end{proposition}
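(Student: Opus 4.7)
The plan is to translate everything, via the Dipper--James parametrization recalled in Section \ref{section DJ param}, into an arithmetic condition on the $\ell$-power part of the Green parameter. First I would write $\overline{\rho}=\overline{\rho(\theta)}$ for some $\kk$-regular $\theta=\theta_r\theta_s$, and use Proposition \ref{characterisation ssduality} to rewrite the $\sigma$-self-duality hypothesis on $\overline{\rho}$ as $\theta_r^{\widetilde{\sigma}}=\theta_r^{-1}$, for $\widetilde{\sigma}$ the unique involution in the cyclic group $\Gal(\ll/\kk_\so)$ of order $2n$ (unique because $n$ is odd). The supercuspidal lifts of $\overline{\rho}$ are then, up to isomorphism, the $\rho(\theta_r\theta_s')$ with $\theta_s'\in\Gamma_s$ such that $\theta_r\theta_s'$ is $\kk$-regular, taken modulo the action of $\Gal(\ll/\kk)_{\theta_r}$.

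The crux is to reduce $\sigma$-self-duality of such a lift to a condition purely on $\theta_s'$. By Green's parametrization, $\rho(\theta_r\theta_s')^\sigma\simeq\rho(\theta_r\theta_s')^\vee$ is equivalent to the existence of $\tau\in\Gal(\ll/\kk)$ with $(\theta_r\theta_s')^{\widetilde{\sigma}\tau}=(\theta_r\theta_s')^{-1}$. Mimicking the argument from the proof of Lemma \ref{lemma existence of s or ssdual ell-adic scusp}\eqref{lemma14ii}, regularity of $\theta_r\theta_s'$ forces $(\widetilde{\sigma}\tau)^2=1$, and since $\widetilde{\sigma}$ is the unique involution in $\Gal(\ll/\kk_\so)$, this pins down $\tau=1$. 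Combined with $\theta_r^{\widetilde{\sigma}}=\theta_r^{-1}$, the condition collapses to $(\theta_s')^{\widetilde{\sigma}}=(\theta_s')^{-1}$, i.e.\ $\theta_s'\in\Gamma^-$.

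With that reduction, the three cases fall out by purely $\ell$-arithmetic considerations. For \eqref{Prop18parti}, $\ell$ coprime with $q^n-1=|\ll^\times|$ gives $\Gamma_s=\{1\}$, so there is a single lift, namely $\rho(\theta_r)$, which satisfies the condition trivially. For \eqref{Prop18partii}, the hypotheses $\ell$ odd, $\ell\mid q_\so^n-1$, and $\gcd(q_\so^n-1,q_\so^n+1)=2$ together force $\Gamma_s\subseteq\Gamma^+$, so the condition $\theta_s'\in\Gamma^-$ becomes $\theta_s'\in\Gamma^+\cap\Gamma^-\cap\Gamma_s=\{1\}$, because the only nontrivial element $\eta$ of $\Gamma^+\cap\Gamma^-$ has order $2$ and hence lies outside $\Gamma_s$. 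The dichotomy (a) versus (b) then corresponds exactly to whether $\theta_r$ is $\kk$-regular, i.e.\ whether the unique candidate $\rho(\theta_r)$ is a genuine supercuspidal lift: in (a) it is, and the count of supercuspidal lifts is $|\Gamma_s|=\ell^a$ since $\Gal(\ll/\kk)_{\theta_r}$ is trivial for $\kk$-regular $\theta_r$; in (b) it is not, so no supercuspidal lift is $\sigma$-self-dual. Finally, for \eqref{Prop18partiii}, $\ell\mid q_\so^n+1$ gives $\Gamma_s\subseteq\Gamma^-$, so every $\theta_s'\in\Gamma_s$ satisfies the condition and every supercuspidal lift is $\sigma$-self-dual.

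The only step with any content is the reduction $\tau=1$ in the second paragraph, and that is already essentially done in the excerpt; everything else is bookkeeping with the James classification and with the cyclic subgroups $\Gamma^\pm$ of $\Gamma$.
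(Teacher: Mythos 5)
Your proposal is correct and follows essentially the same route as the paper's proof: parametrise the supercuspidal lifts via James as $\rho(\theta_r\mu)$ with $\mu\in\Gamma_s$, use regularity (and $n$ odd) to force $\tau=\Id$ so that $\sigma$-self-duality of the lift reduces to $\mu\in\Gamma^-$, and then split into the three cases by locating $\Gamma_s$ inside $\Gamma^+$, $\Gamma^-$, or trivially. The only cosmetic difference is how you derive $\tau=\Id$ (via $\widetilde\sigma$ being the unique involution of the cyclic group $\Gal(\ll/\kk_\so)$, whereas the paper reads off $\tau^2=\Id$ and uses that $|\Gal(\ll/\kk)|=n$ is odd), which is the same observation dressed slightly differently; also note that the uniqueness of the involution in a cyclic group of even order is automatic, while the parity of $n$ is really what guarantees that this involution lies outside $\Gal(\ll/\kk)$.
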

\begin{proof}
Write~$\overline{\rho}=\overline{\rho(\theta)}$ for a $\kk$-regular character~$\theta$ of $\ll^\times$, and let~$\widetilde{\sigma}\in\Gal(\ll/\kk_\so)$ be the unique involution extending~$\sigma$. 

The set of isomorphism classes of supercuspidal~$\Ql$-representations lifting~$\overline{\rho}$ is then
\[\{\rho(\theta_r\mu):\mu\in\Gamma_s,\theta_r\mu \ \kk-\text{regular}\}/\simeq.\]
Such a representation~$\rho(\theta_r\mu)$ is~$\sigma$-self-dual if and only if there exists~$\tau\in\Gal(\ll/\kk)$ such that~$(\theta_r\mu)^{\widetilde{\sigma}}=(\theta_r^{-1}\mu^{-1})^\tau$.  As~$\theta_r\mu$ is regular, this condition implies that~$\tau^2=(\widetilde{\sigma}\tau)^2$ is the identity, so that~$\tau=\Id$ as~$n$ is odd. So~$\rho(\theta_r\mu)$ is~$\sigma$-self-dual if and only if~$\theta_r^{\widetilde{\sigma}}=\theta_r^{-1}$ and~$\mu^{\widetilde{\sigma}}=\mu^{-1}$, and the set of 
$\sigma$-self-dual lifts of~$\overline{\rho}$ is equal to 
\[\{\rho(\theta_r\mu):\mu\in\Gamma_s\cap \Gamma^-,\theta_r\mu \ \kk-\text{regular}\}/\simeq\] because 
the condition~$\theta_r^{\widetilde{\sigma}}=\theta_r^{-1}$ is always satisfied thanks to Proposition \ref{characterisation ssduality}. In particular when~$\theta_r$ is regular, then all~$\theta_r\mu$ must be regular as well and 
the cardinality of the set of $\sigma$-selfdual lifts of~$\overline{\rho}$ is that of~$\Gamma_s$, namely the highest power of~$\ell$ dividing~$q^n-1$.

In particular if~$\ell$ is prime to~$q^n-1$ then~$\Gamma_s$ is trivial and this proves \ref{Prop18parti}. 
 
 If~$\ell\mid q_\so^n-1$.  Then~$\Gamma_s\subseteq \Gamma^+$, and~$\Gamma_s\cap \Gamma^-=\Gamma_s\cap \Gamma^-\cap\Gamma^+=\{1\}$ because~$\Gamma^+\cap\Gamma^-=\{1,\eta\}$ and~$\eta\not\in\Gamma_s$.  Hence if~$\overline{\rho}$ is supercuspidal i.e. if~$\theta_r$ is regular, then~$\rho(\theta_r)$ is the unique~$\sigma$-self-dual supercuspidal lift of~$\overline{\rho}$ whereas if~$\overline{\rho}$ is cuspidal non-supercuspidal, then it has no~$\sigma$-self-dual supercuspidal lift, and we have shown \ref{Prop18partii}.
 
Finally suppose that~$\ell\mid q_\so^n+1$, then~$\Gamma_s\subseteq \Gamma^-$ so all supercuspidal lifts of~$\overline{\rho}$ are~$\sigma$-self-dual, and we have shown \ref{Prop18partiii}.
\end{proof}

In the case~$\ell=2$ we have:

\begin{proposition}\label{Prop always non ssdual lift when l=2} 
Assume that~$n$ is odd and~$\ell=2$. Let~$\overline{\rho}$ be a~$\sigma$-self-dual cuspidal~$\Fl$-representation of $\mathcal{G}$, then it has 
a non~$\sigma$-self-dual lift.
\end{proposition}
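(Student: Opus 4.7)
The plan is to use the Dipper--James parametrisation to enumerate the supercuspidal lifts of $\overline{\rho}$ and then to finish with a $2$-adic valuation count. Write $\overline{\rho} = \overline{\rho(\theta)}$ for a $\kk$-regular character $\theta = \theta_r \theta_s$ of $\ll^\times$, and let $\widetilde{\sigma}$ denote the unique involution of $\Gal(\ll/\kk_\so)$ extending $\sigma$.

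The first, and crucial, step is to observe that when $n$ is odd and $\ell = 2$, every cuspidal $\Fl$-representation of $\mathcal{G}$ is automatically supercuspidal. By the lifting-the-exponent formula, with $n$ and $q$ both odd, one has $v_2(q^n - 1) = v_2(q - 1)$, so $|\Gamma_s|$ already equals the $2$-part of $|\kk^\times|$. The chain $(\kk^\times)_s \cong (\Gamma^{\Gal(\ll/\kk)})_s \subseteq \Gamma_s$ must therefore be an equality of sizes, forcing every character of $\ll^\times$ of $\ell$-power order to be $\Gal(\ll/\kk)$-invariant. In particular $\Gal(\ll/\kk)_\theta = \Gal(\ll/\kk)_{\theta_r}$, and regularity of $\theta$ entails regularity of $\theta_r$.

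Next, I would note that the proof of Proposition \ref{characterisation ssduality} only uses that every element of $\Gal(\ll/\kk)$ has odd order, which holds here by $n$ odd alone; this still yields $\theta_r^{\widetilde{\sigma}} = \theta_r^{-1}$. With those two preparatory facts the argument in the proof of Proposition \ref{Propsigmaself-dualcases} then goes through verbatim: the supercuspidal lifts of $\overline{\rho}$ are the $\rho(\theta_r \mu)$ for $\mu \in \Gamma_s$ (regularity being automatic since $\theta_r$ is regular), and such a lift is $\sigma$-self-dual if and only if $\mu \in \Gamma^- \cap \Gamma_s$.

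It remains to exhibit $\mu \in \Gamma_s \setminus \Gamma^-$. Setting $A = v_2(q_\so - 1) \ge 1$ and $B = v_2(q_\so + 1) \ge 1$, the lifting-the-exponent formula with $n$ odd gives $|\Gamma_s| = 2^{A+B}$ while $|\Gamma^- \cap \Gamma_s| = 2^{v_2(q_\so^n + 1)} = 2^B$, so $|\Gamma_s \setminus \Gamma^-| = 2^B(2^A - 1) > 0$ since $A \ge 1$. Any $\mu$ in the difference yields a non-$\sigma$-self-dual supercuspidal lift $\rho(\theta_r \mu)$ of $\overline{\rho}$. The one subtlety I expect is in the very first step: without the collapse of cuspidal and supercuspidal, one would need to analyse a cuspidal non-supercuspidal case that in fact does not arise under these hypotheses.
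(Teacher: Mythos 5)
Your proof is correct, and it follows the same basic strategy as the paper (exhibit $\mu\in\Gamma_s\setminus\Gamma^-$ by the cardinality imbalance coming from $q^n-1=(q_\so^n-1)(q_\so^n+1)$, both factors even, and then apply the $\sigma$-self-duality criterion from the proof of Proposition~\ref{Propsigmaself-dualcases}). The genuinely different ingredient is how you ensure the lift is supercuspidal. You first prove that, for $n$ odd and $\ell=2$, every cuspidal $\Fl$-representation of $\mathcal{G}$ is in fact supercuspidal: LTE gives $v_2(q^n-1)=v_2(q-1)$, so $\Gamma_s$ coincides with the $2$-part of $\Gamma^{\Gal(\ll/\kk)}$, every $2$-power-order character is $\Gal(\ll/\kk)$-invariant, and regularity of $\theta$ forces regularity of $\theta_r$. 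With $\theta_r$ regular, every $\rho(\theta_r\mu)$, $\mu\in\Gamma_s$, is automatically supercuspidal, so any $\mu\notin\Gamma^-$ works. The paper instead avoids the question of whether $\theta_r$ is regular: it takes $\mu_0$ a \emph{generator} of the cyclic group $\Gamma_s$, so that $\theta_s$ is a power of $\mu_0$, $\Gal(\ll/\kk)_{\mu_0}\subseteq\Gal(\ll/\kk)_{\theta_s}$, and the stabiliser of $\theta_r\mu_0$ is trivial, giving supercuspidality of $\rho(\theta_r\mu_0)$ directly from regularity of $\theta$ alone. The paper's device is more robust: your ``cuspidal equals supercuspidal'' observation does not carry over to the self-dual analogue (Proposition~\ref{Prop always non sdual lift when l=2}), where $n$ is even and LTE gives $v_2(q^n-1)>v_2(q-1)$, whereas the paper's generator trick works in both settings with the stated substitution. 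Your application of Proposition~\ref{characterisation ssduality} with $\ell=2$ is legitimate --- you correctly note its proof uses only that $n$ is odd --- but it is not strictly needed: once you have chosen $\mu\in\Gamma_s\setminus\Gamma^-$, the failure of $\mu^{\widetilde\sigma}=\mu^{-1}$ alone already forces $\rho(\theta_r\mu)$ to be non-$\sigma$-self-dual.
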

\begin{proof}
Write~$\overline{\rho}=\overline{\rho(\theta)}$ for a regular character~$\theta$, and let~$\widetilde{\sigma}\in\Gal(\ll/\kk_\so)$ be the unique involution extending~$\sigma$. 

First note that~$q^n-1=(q_\so^n-1)(q_\so^n+1)$ so the highest power of~$2$ dividing~$q^n-1$ which is the order of~$\Gamma_s$ 
does not divide~$q_\so^n+1$ as~$2$ also divides~$q_\so^n-1$. In particular~$\Gamma_s$ is not a subgroup of 
$\Gamma^-$, so if~$\mu_0$ is a generator of~$\Gamma_s$, then~$\widetilde{\sigma}(\mu_0)\neq \mu_0^{-1}$. Now we claim that 
$\rho(\theta_r\mu_0)$ is a non~$\sigma$-self-dual lift of~$\overline{\rho(\theta)}$. First it is supercuspidal: indeed 
$\Gal(\ll/\kk)_{\mu_0}\subset \Gal(\ll/\kk)_{\theta_s}$ because~$\theta_s$ is a power of~$\mu_0$, hence 
$\Gal(\ll/\kk)_{\theta_r}\cap \Gal(\ll/\kk)_{\mu_0}$ is trivial because~$\Gal(\ll/\kk)_{\theta_r}\cap \Gal(\ll/\kk)_{\theta_s}$ is. Moreover suppose that~$\rho(\theta_r\mu_0)$ was~$\sigma$-self-dual, then following the beginning of the proof of Proposition \ref{Propsigmaself-dualcases}, this would imply that both~$\theta_r$ and~$\mu_0$ belong to~$\Gamma^-$, which is absurd.
\end{proof}

\subsection{Self-dual lifts of self-dual cuspidal~$\Fl$-representations}\label{section sdual finite lifts}
If there exists a self-dual supercuspidal~$\Ql$-representation of~$\mathcal{G}$ then~$n$ is one or even by Lemma \ref{lemma existence of s or ssdual ell-adic scusp}.  The case~$n=1$ is straightforward, a character is self-dual if and only if it is quadratic and we treat it separately:
\begin{proposition}\label{prop n=1propfinitefield}
Suppose that~$n=1$.  Then~$1,\eta$ are the unique self-dual supercuspidal~$\Ql$-representations of~$\GL_1(\kk)$.  The reductions~$\overline{1},\overline{\eta}$ of~$1,\eta$ respectively are the unique self-dual cuspidal~$\Fl$-representations of~$\GL_1(\kk)$.  
\begin{enumerate}
\item Suppose that~$\ell\nmid q-1$. Then~$\overline{1},\overline{\eta}$ have~$1,\eta$ respectively as unique lift.
\item Suppose that~$\ell\mid q-1$ and let~$\ell^a$ be the highest power of~$\ell$ dividing~$q-1$.  Then~$\overline{1},\overline{\eta}$ each have~$\ell^a$-supercuspidal lifts of which~$1,\eta$ (respectively) is the unique self-dual supercuspidal lift.
\end{enumerate}
Note that case (ii) contains the case~$\ell=2$, in which case~$\overline{1}=\overline{\eta}$. So in particular in case (ii) non trivial lifts of the trivial character of $\k^\times$ always exist. 
\end{proposition}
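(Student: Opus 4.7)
The plan is to reduce the statement to elementary arithmetic in the cyclic character group $\Gamma=\Hom(\kk^\times,\Ql^\times)$ of order $q-1$, exploiting the decomposition $\Gamma=\Gamma_r\times\Gamma_s$ into the prime-to-$\ell$ and $\ell$-power parts already introduced in Section \ref{section ssdual finite lifts}. Since $\GL_1(\kk)=\kk^\times$ is abelian, every irreducible representation is a character and is automatically (super)cuspidal, so cuspidality plays no role here.

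First I would classify the self-dual characters. A character is self-dual iff it is quadratic, and since $q$ is odd the cyclic group $\kk^\times$ has a unique subgroup of order two, so $\Gamma$ contains exactly two quadratic characters, namely $1$ and the unique nontrivial $\eta$. The same reasoning applied to $\Hom(\kk^\times,\Fl^\times)$ produces $\overline{1}$ and $\overline{\eta}$ as the self-dual $\Fl$-characters when $\ell$ is odd, while when $\ell=2$ the group $\Fl^\times$ has no nontrivial $2$-torsion and one finds only $\overline{1}=\overline{\eta}$. Next I would count lifts: reduction modulo $\ell$ kills $\Gamma_s$ (because $\Fl^\times$ has no nontrivial $\ell$-torsion) and restricts to a bijection $\Gamma_r\xrightarrow{\sim}\Hom(\kk^\times,\Fl^\times)$, so the fibre over any $\overline{\chi_r}$ with $\chi_r\in\Gamma_r$ is the coset $\chi_r\Gamma_s$ of cardinality $|\Gamma_s|=\ell^a$ (trivial in case (i), equal to $\ell^a$ in case (ii)), which gives the lift counts asserted in (i) and (ii).

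Finally I would identify the self-dual lifts. Writing a lift as $\chi_r\mu$ with $\mu\in\Gamma_s$, self-duality becomes $\chi_r^2\mu^2=1$, and since $\Gamma_r\cap\Gamma_s=\{1\}$ this forces $\chi_r^2=1$ and $\mu^2=1$ separately. In case (i) there is nothing more to prove, as $\Gamma_s=\{1\}$ and the unique lifts of $\overline{1}$ and $\overline{\eta}$ are $1$ and $\eta$ themselves. In case (ii) with $\ell$ odd, $\mu\in\Gamma_s$ has $\ell$-power (hence odd) order, so $\mu^2=1$ forces $\mu=1$, yielding $1$ and $\eta$ as the unique self-dual lifts of $\overline{1}$ and $\overline{\eta}$ respectively. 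When $\ell=2$ we have instead $\eta\in\Gamma_s$ and $\overline{1}=\overline{\eta}$, so above this single $\Fl$-character the two self-dual lifts $1$ and $\eta$ coexist, and $\eta$ provides the promised nontrivial lift of the trivial character. There is no serious obstacle; the only point demanding care is cleanly separating the cases $\ell$ odd (where $\eta$ sits in $\Gamma_r$) and $\ell=2$ (where $\eta$ sits in $\Gamma_s$), which is also what produces the contrast highlighted in the closing remark of the proposition.
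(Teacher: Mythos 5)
Your proof is correct, and since the paper gives no argument for this proposition beyond calling the case $n=1$ "straightforward" and noting that self-dual means quadratic, your elementary counting in the cyclic group $\Gamma=\Gamma_r\times\Gamma_s$ is exactly the argument the authors have in mind. You are in fact slightly more careful than the paper's statement in isolating what happens when $\ell=2$: the coset $\chi_r\Gamma_s$ then contains both $1$ and $\eta$ as self-dual lifts of the single character $\overline{1}=\overline{\eta}$, which is the content the paper is gesturing at in its closing remark but does not spell out.
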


Hence for the rest of this section we assume that~$n$ is even.  Let~$\sigma'$ denote the unique involution in~$\Gal(\ll/\kk)$ and~$\ll_{\so}'=\ll^{\sigma'}$ denote the~$\sigma'$-fixed subfield.  Then we have an embedding
\[i':\Hom(\Gal(\ll_\so'/\kk),\Ql^\times)\hookrightarrow \Gamma,\qquad i':\gamma\mapsto\gamma\circ \N_{\ll/\ll_\so'},\]
by surjectivity of the norm so its image is the unique subgroup of~$\Gamma$ of order~$q^{n/2}-1$: \[\Gamma_+=\{\gamma\in\Gamma:\gamma^{\sigma'}=\gamma\}.\]
The unique subgroup of~$\Gamma$ of order~$q^{n/2}+1$ is thus~\[\Gamma_-=\{\gamma\in\Gamma:\gamma^{\sigma'}=\gamma^{-1}\},\] their intersection given by~$\Gamma_+\cap\Gamma_-=\{1,\eta\}$ as~$q$ is odd.  As~$(q^{n/2}+1,q^{n/2}-1)=2$, we deduce that if~$\ell$ is odd:
\begin{enumerate}
\item  If~$\ell\mid q^{n/2}-1$, then~$\Gamma_s\subseteq\Gamma_+$;
\item If~$\ell\mid q^{n/2}+1$, then~$\Gamma_s\subseteq \Gamma_-$.
\end{enumerate}

The results concerning self-duality look very similar to those concerning~$\sigma$-self-duality, however in one case we only consider lifting of distinction.

\begin{proposition}\label{self-dualfinitefieldprop}
Suppose that~$n=2m\geqslant 2$ is even and that~$\ell$ is odd. 
Let~$\overline{\rho}$ be a~self-dual cuspidal~$\Fl$-representation of $\mathcal{G}$. 
\begin{enumerate} 
\item\label{Prop110parti} Suppose that~$\ell$ is prime with~$q^{n}-1$. Then the unique supercuspidal lift of~$\overline{\rho}$ is~self-dual.
\item\label{Prop110partii} Suppose that~$\ell\mid q^{n/2}-1$. 
\begin{enumerate}
\item \label{Prop110partiia} If~$\overline{\rho}$ is supercuspidal and~$\ell^a$ is the highest power of~$\ell$ dividing~$q^n-1$, then there is a unique~self-dual lift of~$\overline{\rho}$ amongst its~$\ell^a$ supercuspidal lifts.  In terms, of Green's parameterisation of supercuspidal~$\Ql$-representations, if~$\rho(\theta)$ is a lift of~$\overline{\rho}$ then~$\rho(\theta_r)$ is the unique~self-dual supercuspidal lift of~$\overline{\rho}$.
\item\label{Prop110partiib} If~$\overline{\rho}$ is cuspidal non-supercuspidal, then none of its supercuspidal lifts are self-dual.
\end{enumerate}
\item\label{Prop110partiii} Suppose that~$\ell\mid q^{n/2}+1$ and that~$\overline{\rho}$ is $\GL_m(\kk)\times \GL_m(\kk)$-distinguished. Then all supercuspidal lifts of~$\overline{\rho}$ are~self-dual.
\end{enumerate}
\end{proposition}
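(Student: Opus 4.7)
The plan is to follow the strategy of Proposition \ref{Propsigmaself-dualcases}, with the unique involution $\sigma'\in\Gal(\ll/\kk)$ and the subgroups $\Gamma_\pm\subseteq\Gamma$ replacing $\widetilde{\sigma}$ and $\Gamma^{\pm}$. Write $\overline{\rho}=\overline{\rho(\theta)}$ for a $\kk$-regular $\theta\in\Gamma$, so that the supercuspidal lifts of $\overline{\rho}$ are the $\rho(\theta_r\mu)$ with $\mu\in\Gamma_s$ and $\theta_r\mu$ $\kk$-regular. Such a lift is self-dual if and only if $(\theta_r\mu)^\tau=(\theta_r\mu)^{-1}$ for some $\tau\in\Gal(\ll/\kk)$. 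Regularity of $\theta_r\mu$ forces $\tau^2=1$, so $\tau\in\{\Id,\sigma'\}$; the case $\tau=\Id$ would give $(\theta_r\mu)^2=1$, but the only elements of $\Gamma$ of order dividing $2$ are $1$ and $\eta$, both $\Gal(\ll/\kk)$-invariant, contradicting regularity since $n\geq 2$. Hence $\tau=\sigma'$, and since $\sigma'$ and inversion both preserve the prime-to-$\ell$ and $\ell$-power factors, self-duality of $\rho(\theta_r\mu)$ is equivalent to the simultaneous conditions $\theta_r\in\Gamma_-$ and $\mu\in\Gamma_-$.

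Cases (i) and (ii) then follow routinely from this characterisation. For (i), $\Gamma_s=\{1\}$, so $\theta=\theta_r$ is regular, and applying the same regularity argument to the self-dual hypothesis on $\overline{\rho}$ gives $\theta_r\in\Gamma_-$, so the unique lift $\rho(\theta)$ is self-dual. For (ii), $\Gamma_s\subseteq\Gamma_+$, hence $\Gamma_s\cap\Gamma_-\subseteq\Gamma_+\cap\Gamma_-=\{1,\eta\}$; since $\ell$ is odd, $\eta\notin\Gamma_s$, and therefore $\Gamma_s\cap\Gamma_-=\{1\}$. So only $\mu=1$ can give a self-dual lift. In (ii)(a), this lift $\rho(\theta_r)$ is supercuspidal, and self-duality of $\overline{\rho}$ together with regularity of $\theta_r$ yields $\theta_r\in\Gamma_-$, so $\rho(\theta_r)$ is self-dual. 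In (ii)(b), $\theta_r$ is not regular, so $\rho(\theta_r)$ is not a supercuspidal representation of $\mathcal{G}$, and no supercuspidal lift of $\overline{\rho}$ is self-dual.

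The main obstacle is Case (iii). Here $\Gamma_s\subseteq\Gamma_-$ renders $\mu\in\Gamma_-$ automatic for any supercuspidal lift, and the entire content of the case reduces to proving $\theta_r\in\Gamma_-$ from the $(\GL_m\times\GL_m)(\kk)$-distinction hypothesis. When $\overline{\rho}$ is itself supercuspidal this is immediate: Lemma \ref{lemma finite distinction vs s(s)duality} part (ii) gives self-duality of $\overline{\rho}$, and the regularity argument of Step 1 applied to $\theta_r$ gives $\theta_r\in\Gamma_-$. The genuinely new difficulty is the cuspidal non-supercuspidal case, where $\theta_r$ is not regular so the direct Galois argument is blocked. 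The plan is to establish, as the analogue of Proposition \ref{characterisation ssduality} in the present setting, the implication that $(\GL_m\times\GL_m)(\kk)$-distinction of a cuspidal $\Fl$-representation $\overline{\rho(\theta)}$ of $\mathcal{G}$ forces $\theta_r\in\Gamma_-$ whenever $\ell\mid q^{n/2}+1$; this is the technical heart of the proof, and the plan is to adapt the Mackey-theoretic arguments of \cite[Lemmas 7.1 and 7.3]{Secherre} from the supercuspidal $\Ql$-setting to the cuspidal $\Fl$-setting, exploiting the fact that in case (iii) the $\ell$-power part of $\theta$ already lies in $\Gamma_-$. Once this is granted, every supercuspidal lift $\rho(\theta_r\mu)$ satisfies $\theta_r\mu\in\Gamma_-$ and is self-dual by Step 1.
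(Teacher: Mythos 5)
Your treatment of parts (i) and (ii) matches the paper: you correctly establish, via the case $\tau\in\{\Id,\sigma'\}$ analysis, that $\rho(\theta_r\mu)$ is self-dual if and only if $\theta_r\in\Gamma_-$ and $\mu\in\Gamma_-$, and then (i), (ii)(a), (ii)(b) follow from $\Gamma_s=\{1\}$ or $\Gamma_s\cap\Gamma_-=\{1\}$ exactly as in the paper. You also correctly note that when $\theta_r$ is regular, self-duality of $\overline{\rho}$ forces $\theta_r\in\Gamma_-$.

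Your part (iii), however, has a genuine gap where you say the cuspidal non-supercuspidal case is ``the technical heart'' and propose to adapt the Mackey-theoretic arguments of S\'echerre's Lemmas 7.1 and 7.3: you do not carry this out, and this is precisely where the paper takes a different (and far shorter) route that avoids $\theta_r$ entirely. The paper invokes Lemma~\ref{lemma finite rel banal dist lifts}: under $\ell\mid q^{m}+1$ with $\ell$ odd, the compact group $\GL_m(\kk)\times\GL_m(\kk)$ has order prime to $\ell$, so $\GL_m(\kk)\times\GL_m(\kk)$-distinction of $\overline{\rho}$ lifts to \emph{every} supercuspidal $\Ql$-lift $\widetilde{\rho}$; then Lemma~\ref{lemma finite distinction vs s(s)duality}~(ii) immediately gives that $\widetilde{\rho}$ is self-dual. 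This handles the supercuspidal and non-supercuspidal cases uniformly and in one line, and it also explains why the hypothesis in (iii) is deliberately phrased as $\GL_m(\kk)\times\GL_m(\kk)$-distinction rather than as self-duality of $\overline{\rho}$: unlike the odd-$n$, $\sigma$-self-dual setting, there is no analogue of Proposition~\ref{characterisation ssduality} for $n$ even (the regularity argument there uses that $\tau$ has odd order, which fails here), so one cannot read $\theta_r\in\Gamma_-$ off self-duality alone when $\theta_r$ is non-regular, and the stronger distinction hypothesis is needed. You should replace your ``plan'' with this appeal to Lemma~\ref{lemma finite rel banal dist lifts} and Lemma~\ref{lemma finite distinction vs s(s)duality}~(ii).
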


\begin{proof}
 
A lift~$\rho(\theta_r\mu)$ with~$\mu\in \Gamma_s$ is~self-dual if and only if there exists~$\tau\in\Gal(\ll/\kk)$ such that~$\theta_r\mu=(\theta_r^{-1}\mu^{-1})^\tau$ hence~$\tau^2=\Id$, i.e.~$\tau=\Id$ or~$\sigma'$. The first case is impossible 
as it would imply that~$\theta_r\mu$ is the quadratic character in~$\Gamma$, which would contradict its regularity. Hence 
$\rho(\theta_r\mu)$ is self-dual if and only if~$\theta_r^{\sigma'}=\theta_r^{-1}$ and~$\mu^{\sigma'}=\mu^{-1}$.
Thus, if~$\theta_r^{\sigma'}=\theta_r^{-1}$, then the set \[\{\rho(\theta_r\mu):\mu\in\Gamma_s\cap \Gamma_-,\theta_r\mu\text{ regular}\}/\simeq \]
is a full set of representatives for the isomorphism classes of~self-dual lifts of~$\overline{\rho}$ (and there are no self-dual lifts if~$\theta_r^{\sigma'}\neq \theta_r^{-1}$).

For Parts \ref{Prop110parti}, \ref{Prop110partiia} as~$\theta_r$ is regular, the self-duality of $\overline{\rho}$ implies that~$\theta_r^{\sigma'}=\theta_r^{-1}$. Hence 
Parts \ref{Prop110parti}, \ref{Prop110partiia} follow in the same way as their analogues in Proposition \ref{Propsigmaself-dualcases}. Part \ref{Prop110partiib} is obvious as~$\Gamma_s\cap \Gamma_-$ is trivial in this case. 
Finally \ref{Prop110partiii} holds because distinction lifts in this case by Lemma \ref{lemma finite rel banal dist lifts}.

\end{proof}

When~$\ell=2$, we have the exact analogue of Proposition \ref{Prop always non ssdual lift when l=2} with the same proof, 
replacing~$q_\so^n$ by~$q^{n/2}$. 

\begin{proposition}\label{Prop always non sdual lift when l=2} 
Assume that~$n\geqslant 2$ is even and~$\ell=2$. Let~$\overline{\rho}$ be a self-dual cuspidal~$\Fl$-representation of $\mathcal{G}$, then it has a non self-dual lift.
\end{proposition}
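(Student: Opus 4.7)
The plan is to transpose the argument used for Proposition \ref{Prop always non ssdual lift when l=2} essentially verbatim, substituting the unique involution $\sigma'$ of $\Gal(\ll/\kk)$ for $\widetilde{\sigma}$, the subgroup $\Gamma_-$ for $\Gamma^-$, and $q^{n/2}$ for $q_\so^n$. First I would write $\overline{\rho} = \overline{\rho(\theta)}$ for a $\kk$-regular character $\theta = \theta_r\theta_s$ of $\ll^\times$; the goal is to exhibit an explicit supercuspidal lift of $\overline{\rho}$ that fails to be self-dual.

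The first step is the arithmetic observation that $\Gamma_s \not\subseteq \Gamma_-$. This follows from the factorisation
\[q^n - 1 = (q^{n/2}-1)(q^{n/2}+1),\]
together with the fact that, since $p \neq 2$ and hence $q$ is odd, both factors on the right are even; consequently the $2$-adic valuation of $q^n - 1$ strictly exceeds that of $q^{n/2}+1$, so the order of $\Gamma_s$ does not divide the order $q^{n/2}+1$ of $\Gamma_-$. I would then choose any generator $\mu_0$ of the cyclic group $\Gamma_s$, so that $\mu_0 \notin \Gamma_-$, i.e.\ $\mu_0^{\sigma'} \neq \mu_0^{-1}$.

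The second step is to verify that $\rho(\theta_r\mu_0)$ is the desired non-self-dual supercuspidal lift of $\overline{\rho}$. It is a lift because $(\theta_r\mu_0)_r = \theta_r$, and it is supercuspidal provided $\theta_r\mu_0$ is $\kk$-regular; this holds since $\mu_0$ generates $\Gamma_s$ and $\theta_s$ is a power of $\mu_0$, whence $\Gal(\ll/\kk)_{\mu_0} \subseteq \Gal(\ll/\kk)_{\theta_s}$, so $\Gal(\ll/\kk)_{\theta_r\mu_0} = \Gal(\ll/\kk)_{\theta_r}\cap\Gal(\ll/\kk)_{\mu_0}$ is trivial by regularity of $\theta$. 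Finally, if $\rho(\theta_r\mu_0)$ were self-dual, the criterion established at the start of the proof of Proposition \ref{self-dualfinitefieldprop} would force both $\theta_r$ and $\mu_0$ to lie in $\Gamma_-$, contradicting our choice of $\mu_0$. Since this is a mechanical transcription of the $\sigma$-self-dual case, the only mild subtlety is confirming that the $2$-adic valuation estimate survives the substitution of $q^{n/2}$ for $q_\so^n$, which as noted hinges only on $q$ being odd.
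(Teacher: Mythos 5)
Your proposal is correct and is essentially identical to the paper's own proof: the paper literally says the result follows from the proof of Proposition \ref{Prop always non ssdual lift when l=2} by the substitution $q_\so^n \rightsquigarrow q^{n/2}$, which is exactly the transcription you carry out. The one small point worth noting — which you handle correctly by invoking the opening lines of the proof of Proposition \ref{self-dualfinitefieldprop} — is that in the self-dual setting $\Gal(\ll/\kk)$ has even order, so the derivation of the criterion ``$\rho(\theta_r\mu)$ self-dual $\iff$ $\theta_r,\mu\in\Gamma_-$'' must separately rule out $\tau=\Id$ via regularity, whereas in the $\sigma$-self-dual case with $n$ odd this is automatic.
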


\section{Type theory and distinction}

From now on we set~$\G=\GL_n(\F)$,~$\G_\so=\GL_n(\F_\so)$, and~$\sigma$ the Galois involution. We use the Bushnell--Kutzko construction of cuspidal representations of $\G$ \cite{BK}, extended by Vign\'eras to the setting of cuspidal~$\R$-representations \cite[\S III]{Vig96}. We summarise its properties that we will use, and refer the reader to \cite{BK} and \cite{Vig96} for more details on this construction.

\subsection{Properties of types}

Let $\pi$ be a cuspidal $\R$-representation of $\G$.  Then associated to it is a family of explicitly constructed pairs $(\bJ,\bl)$, called \emph{extended maximal simple types}, where $\bJ$ is a compact open subgroup of~$\G$ containing the centre $\Z_\G$ of~$\G$ with $\bJ/\Z_\G$ compact, and
$\bl$ is an irreducible (hence finite dimensional) representation of $\bJ$ such that \[\pi\simeq \ind_{\bJ}^{\G}(\bl).\] 
We abbreviate extended maximal simple type to \emph{type} for the rest of the paper, and will say~\emph{$\R$-type} when we wish to specify the field~$\R$ considered.  
%
%

Let~$(\bJ,\bl)$ be an~$\R$-type \emph{in}~$\pi$, i.e.~associated to~$\pi$ as described above.  Types enjoy the following key properties:
\begin{enumerate}[(T-1)]
\item Two types in~$\pi$ are conjugate in~$\G$, \cite[6.2.4]{BK} and \cite[III 5.3]{Vig96}
\item The group $\bJ$ has a unique maximal compact subgroup $\J$, and $\J$ has a unique maximal normal pro-$p$-subgroup $\J^1$, cf.~\cite[\S 3.1]{BK} for the definitions of these groups. 
\item\label{Edefinition}There is a subfield~$\E$ of~$\mathcal{M}_n(\F)$ containing~$\F$, the multiplicative group of which normalises~$\J$, and~$\bJ=\E^\times\J$.  (In fact, we have summarised this construction in reverse; the extension~$\E/\F$ is part of the original data used to construct the type.)   The quotient $\J/\J^1$ is isomorphic to $\GL_m(\kk_E)$ with $m=n/[\E:\F]$. 
 Moreover,~$\E^\times \cap \J=\oo_{\E}^\times$, hence~$\bJ=\langle \w_\E \rangle \J$ is the semi-direct product
~of~$\J$ with the group generated by~$\w_\E$.
\item\label{type urtwist} Let~$(\bJ,\bl)$ be a type, let $\bl'$ be a representation of $\bJ$, set $\pi=\ind_{\bJ}^{\G}(\bl)$ and $\pi'=\ind_{\bJ}^{\G}(\bl')$. If $\bl'\mid_{\J}\simeq \bl\mid_{\J}$, then $(\bJ,\bl')$ is a type and the cuspidal representation $\pi'$ is an unramified twist of $\pi$. Conversely if $(\bJ,\bl')$ is a type and the cuspidal representation $\pi'$ is an unramified twist of $\pi$, then $\bl'\mid_{\J}\simeq \bl\mid_{\J}$.
\item The representation $\bl$ (by construction) decomposes (non-uniquely) as a tensor product $\bk\otimes \bt$, where:
\begin{itemize} 
\item $\bt$ is a representation of $\bJ$ trivial on $\J^1$ which restricts irreducibly to $\J$, and the representation 
of $\J/\J^1$ induced by $\bt$ identifies with a cuspidal representation of $\GL_m(\kk_\E)$.
\item $\bk$ is a representation of $\bJ$ which restricts irreducibly to $\J^1$.
\end{itemize}
\item The representation $\pi$ is supercuspidal if and only if $\bt$ induces a supercuspidal restriction on~$\J/\J^1$, \cite[III 5.14]{Vig96}.
\item\label{type bt unique when bt fixed} The pair $(\bJ,\bk\otimes \bt')$ is another type in $\pi$ if and only if $\bt\simeq \bt'$.
\item When~$\R=\Ql$, the representation $\pi$ is integral if and only if $\bl$ is integral.
\item The construction is compatible with reduction modulo~$\ell$ in the following sense: given a~$\Ql$-type~$(\bJ,\bl)$ with~$\bl$ integral,~$(\bJ,r_{\ell}(\bl))$ is an~$\Fl$-type and $r_{\ell}(\ind_{\bJ}^\G(\bl))=\ind_{\bJ}^\G(r_{\ell}(\bl))$ is a cuspidal~$\Fl$-representation, \cite[III 4.25]{Vig96}. 
\item \label{type lift} The construction lifts \cite[III 4.29]{Vig96}: given an~$\Fl$-type~$(\bJ,\bk\otimes\bt)$, there is a unique irreducible~$\Ql$-representation~$\widetilde{\eta}$ of~$\J^1$ which lifts~$\bk\mid_{\J^1}$ and we can fix a extension~$\widetilde{\bk}$ of~$\widetilde{\eta}$ with~$r_{\ell}(\widetilde{\bk})=\bk$.  As all of the extensions of~$\widetilde{\eta}$ to~$\bJ$ are related by twisting by a character trivial on~$\J^1$, Property (T-\ref{type bt unique when bt fixed}) implies that the set of isomorphism classes of lifts of~$\pi=\ind_{\bJ}^\G(\bk\otimes \bt)$ is in bijection with the set of isomorphism classes of lifts of~$\bt$  by~$\widetilde{\bt}\mapsto \ind_{\bJ}^\G(\widetilde{\bk}\otimes\widetilde{\bt})$.
\item We call a field extension~$\E/\F$ associated to a type in~$\pi$ as in (T-~\ref{Edefinition}) a \emph{parameter field} for~$\pi$.  While there are potentially many choices, the ramification index~$e(\E/\F)$, inertial degree~$f(\E/\F)$, and (hence) the degree~$[\E:\F]$ are invariants of~$\pi$ as follows from \cite[3.5.1]{BK}. As such, we write~$d(\pi)=[\E:\F]$,~$e(\pi)=e(\E/\F)$ and~$f(\pi)=f(\E/\F)$.  We write~$m(\pi)$ for~$n/d(\pi)$.  These invariants are compatible with reduction modulo~$\ell$: for~$\pi$ an integral cuspidal~$\Ql$-representation we have
\[d(\pi)=d(r_{\ell}(\pi)),~e(\pi)=e(r_{\ell}(\pi)),~f(\pi)=f(r_{\ell}(\pi)),~m(\pi)=m(r_{\ell}(\pi)).\]
\end{enumerate} 

\subsection{Galois-self-dual types}\label{GSDTsection}

It has recently been showed in \cite{AKMSS} and \cite{Secherre} that the construction of types
also enjoys good compatibility properties with $\sigma$-self-duality and distinction. Indeed, according to \cite[\S 4]{AKMSS}, 
if $\pi$ is a cuspidal $\R$-representation of $\G$ which is $\sigma$-self-dual, then one can chose a type $(\bJ,\bl)$ in~$\pi$ such that:

\begin{enumerate}[(SSDT-I)]
\item\label{ssdtpe1} $\bJ$ (hence $\J$ and $\J^1$) and $\E$ are $\sigma$-stable, and $\bl^\vee\simeq \bl^\sigma$.
\item\label{ssdtpe2} Set~$\E_\so=\E^\sigma$, then $\E/\E_\so$ is a quadratic extension and we can choose a uniformiser~$\w_\E$ with~$\sigma(\w_\E)=\w_\E$ if~$\E/\E_\so$ is unramified and $\sigma(\w_E)=-\w_E$ if~$\E/\E_\so$ ramified as in \cite[(5.2)]{Secherre}.
\item\label{ssdtpe refined} $\bk$ hence $\bt$ are $\sigma$-self-dual (\cite[Lemma 8.9]{Secherre}).
\end{enumerate}

The ramification index~$e(\E/\E_\so)\in\{1,\ 2\}$ (and is equal to $1$ if $\F/\F_\so$ is unramified) is an invariant of~$\pi$ and we write
\[e_\sigma(\pi)=e(\E/\E_\so).\] 

\begin{remark}\label{remark E vs T}
This latter invariant is also equal to the ramification index of the extension $\T/\T^\sigma$, where 
$\T$ is the maximal tamely ramified extension of $\F$ contained in $\E$ thanks to \cite[Remark 4.15 (2)]{Secherre}. We use this fact when referring to some results of \cite{Secherre}.
\end{remark}

In \cite[\S 6.2]{AKMSS}, another invariant, a positive integer~$e_\so(\pi)$ dividing~$n$ defined only in terms of the~$\sigma$-stable group~$\bJ$ is associated to~$\pi$. By \cite[Lemma 5.10]{AKMSS}, we have the following description:
\begin{equation*}
e_\so(\pi)=\begin{cases}
2 e(\E_\so/\F_\so)&\text{if }e_\sigma(\pi)=2\text{ and }m(\pi)\neq 1;\\
e(\E_\so/\F_\so)&\text{otherwise.}
\end{cases}
\end{equation*}

 Again, these invariants are compatible with reduction modulo~$\ell$: for~$\pi$ an integral~$\sigma$-self-dual supercuspidal~$\Ql$-representation we have
\[e_\sigma(\pi)=e_{\sigma}(r_{\ell}(\pi)),~e_\so(\pi)=e_\so(r_{\ell}(\pi)).\]

\begin{definition}\label{definition ssdtype} We call a type $\bl=\bk\otimes \bt$ satisfying Conditions (SSDT-\ref{ssdtpe1}) to 
(SSDT-\ref{ssdtpe refined}) a \textit{$\sigma$-self-dual type}.
\end{definition}

Note that an immediate consequence of the existence of a $\sigma$-self-dual cuspidal type in a~$\sigma$-self-dual cuspidal representation is that in some cases there are no $\sigma$-self-dual cuspidal representations, as follows from \cite[Lemma 6.9 and Lemma 8.1]{Secherre}:

\begin{lemma}\label{lemma no ssdual sc reps in bad cases}
Let $\pi$ be $\sigma$-self-dual cuspidal~$\R$-representation.
\begin{enumerate}
\item If $e_\sigma(\pi)=1$ and $\pi$ is supercuspidal, then $m(\pi)$ is odd.
\item If $e_\sigma(\pi)=2$, then $m(\pi)$ is either equal to $1$ or even.
\end{enumerate} 
\end{lemma}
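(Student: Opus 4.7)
The plan is to reduce the statement to the analogous finite-field result for cuspidal representations of $\GL_m(\kk_\E)$, applied to the representation cut out from $\bt$ in a $\sigma$-self-dual type.

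First I would fix a $\sigma$-self-dual type $(\bJ,\bl)$ in $\pi$ as in (SSDT-\ref{ssdtpe1})--(SSDT-\ref{ssdtpe refined}), with factorisation $\bl\simeq \bk\otimes\bt$. By (SSDT-\ref{ssdtpe refined}) the representation $\bt$ is $\sigma$-self-dual, so the restriction of $\bt$ to $\J$ descends to a cuspidal $\R$-representation $\overline{\bt}$ of the quotient $\J/\J^1\simeq \GL_m(\kk_\E)$, where $m=m(\pi)$; moreover $\overline{\bt}$ is supercuspidal exactly when $\pi$ is supercuspidal by (T-6). The key observation is then the identification of the induced action of $\sigma$ on $\J/\J^1$. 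Since $\E$ is $\sigma$-stable, $\sigma$ preserves the ring of integers $\oo_\E$ and its maximal ideal, and thus induces an action on the residue field $\kk_\E$. If $e_\sigma(\pi)=1$, i.e. $\E/\E_\so$ is unramified, then this action is the non-trivial element of $\Gal(\kk_\E/\kk_{\E_\so})$, and the induced involution on $\GL_m(\kk_\E)=\J/\J^1$ is, up to an inner automorphism, the Galois involution. If $e_\sigma(\pi)=2$, i.e. $\E/\E_\so$ is ramified, then $\kk_\E=\kk_{\E_\so}$ and the choice $\sigma(\w_\E)=-\w_\E$ forces the action of $\sigma$ on $\J/\J^1\simeq\GL_m(\kk_\E)$ to be, up to an inner automorphism, the identity.

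For part (i), I would combine the above with the hypothesis that $\pi$ is supercuspidal to conclude that $\overline{\bt}$ is a $\sigma$-self-dual supercuspidal $\R$-representation of $\GL_m(\kk_\E)$ for the quadratic extension $\kk_\E/\kk_{\E_\so}$. The finite-field analogue of Lemma \ref{lemma existence of s or ssdual ell-adic scusp}(i), valid for both $\Ql$- and $\Fl$-coefficients (this is precisely \cite[Lemma 8.1]{Secherre}), then forces $m$ to be odd. For part (ii), the above analysis shows that $\overline{\bt}$ is a self-dual cuspidal $\R$-representation of $\GL_m(\kk_\E)$; the finite-field analogue covering both cuspidal $\Ql$- and $\Fl$-representations (this is \cite[Lemma 6.9]{Secherre}) yields $m=1$ or $m$ even.

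The main technical point is the residue-field analysis in the previous paragraph, because although the group-theoretic statement $\bt^\sigma\simeq \bt^\vee$ is given, one must verify that after passing to $\J/\J^1$ the resulting involution genuinely is (up to inner conjugation) of the expected Galois or trivial type. This is exactly the content of the normalisations recalled in (SSDT-\ref{ssdtpe2}) and, in the ramified case, uses that the uniformiser $\w_\E$ has been chosen with $\sigma(\w_\E)=-\w_\E$ so that conjugation by $\w_\E$ and by $\sigma(\w_\E)$ agree modulo $\J^1$. Once this compatibility is in place, the proof is essentially formal and the finite-field results do the work.
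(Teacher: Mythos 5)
Your proof is correct and follows exactly the route the paper implicitly relies on: the paper itself gives no independent proof of this lemma, deferring entirely to \cite[Lemma 6.9 and Lemma 8.1]{Secherre}, and your argument---pass to the residual representation $\overline{\bt}$ on $\J/\J^1\simeq\GL_m(\kk_\E)$ via a $\sigma$-self-dual type, identify the induced involution as the Galois involution in the unramified case ($e_\sigma(\pi)=1$) and an inner one in the ramified case ($e_\sigma(\pi)=2$, using $\sigma(\w_\E)=-\w_\E$), and then invoke the finite-field statements for $\sigma$-self-dual respectively self-dual cuspidal representations---is precisely the content of S\'echerre's cited lemmas. The one place worth a pause is your part (ii): the finite-field self-duality constraint in the cuspidal but non-supercuspidal $\Fl$-case does not follow from the naive $\Ql$-regularity argument in the paper's Lemma \ref{lemma existence of s or ssdual ell-adic scusp}(ii), so citing \cite[Lemma 6.9]{Secherre} rather than the paper's internal finite-field lemma (which is stated only for supercuspidal representations) is indeed the right move.
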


A crucial property of $\sigma$-self-dual types is the following:

\begin{proposition}{\cite[Lemma 5.19]{Secherre}}\label{prop ssdtpe canonical iso}
Let~$(\bJ,\bl)$ be a~$\sigma$-self-dual type. Then there exists a unique character $\chi_{\bk}$ of $\bJ^\sigma$ trivial on $(\J^1)^\sigma$ such that 
\[\Hom_{(\J^1)^{\sigma}}(\bk,\R)=\Hom_{{\bJ}^\sigma}(\bk,\chi_{\bk}),\] and the canonical map 
\[\Hom_{{\bJ}^\sigma}(\bk,\chi_{\bk})\otimes \Hom_{{\bJ}^\sigma}(\bt,\chi_{\bk}^{-1})\rightarrow 
\Hom_{{\bJ}^\sigma}(\bl,\R)\] is an isomorphism.
\end{proposition}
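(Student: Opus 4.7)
The plan is to leverage the tensor decomposition $\bl=\bk\otimes\bt$ together with structural facts about the restriction $\bk|_{\J^1}$, reducing the statement to a separate analysis of the $\bk$-factor (encoding the pro-$p$ data) and the $\bt$-factor (encoding the finite data).

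The crucial step is to show that $\Hom_{(\J^1)^\sigma}(\bk,\R)$ is one-dimensional. The representation $\bk$ is $\sigma$-self-dual by the last property listed in the definition of $\sigma$-self-dual types, and restricts irreducibly to the pro-$p$-group $\J^1$; the restriction $\bk|_{\J^1}$ therefore factors through a finite $p$-quotient carrying an involution $\sigma$. For finite $p$-groups with an involution in odd residual characteristic, the classical argument that $(P,P^\sigma)$ is a Gelfand pair shows that $\sigma$-self-dual irreducibles are distinguished by the fixed subgroup with multiplicity exactly one. This argument transfers unchanged to the $\ell$-modular setting since $\ell\neq p$ ensures semisimplicity of the finite-dimensional $\R$-representation theory at play.

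Once one-dimensionality is in hand, the character $\chi_{\bk}$ is obtained formally: as $(\J^1)^\sigma$ is normal in $\bJ^\sigma$, the latter acts on the one-dimensional space $\Hom_{(\J^1)^\sigma}(\bk,\R)$ by a unique character, which is automatically trivial on $(\J^1)^\sigma$. Setting $\chi_{\bk}$ to be this character, the identification $\Hom_{(\J^1)^\sigma}(\bk,\R)=\Hom_{\bJ^\sigma}(\bk,\chi_{\bk})$ is tautological.

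For the tensor product decomposition, I would use that $\bt$ is trivial on $\J^1$ and finite dimensional, yielding a canonical $\bJ^\sigma$-equivariant identification
\[\Hom_{(\J^1)^\sigma}(\bk\otimes\bt,\R)\;\simeq\;\Hom_{(\J^1)^\sigma}(\bk,\R)\otimes\bt^\vee\;\simeq\;\chi_{\bk}\otimes\bt^\vee.\]
Taking $\bJ^\sigma$-invariants on both sides gives $\Hom_{\bJ^\sigma}(\bl,\R)\simeq\Hom_{\bJ^\sigma}(\bt,\chi_{\bk}^{-1})$, which pairs with the one-dimensional factor $\Hom_{\bJ^\sigma}(\bk,\chi_{\bk})$ to recover the canonical isomorphism of the statement; its inverse is just pointwise multiplication of linear forms. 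The main obstacle is the one-dimensionality in the first step, particularly its verification over $\Fl$; everything after it is essentially bookkeeping with tensor products and character twists.
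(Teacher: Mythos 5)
The paper does not actually prove this proposition; it is quoted verbatim as \cite[Lemma 5.19]{Secherre}, so there is no in-paper argument against which to compare. Your reconstruction is nevertheless correct and matches the approach one would expect in the cited source: the crux is multiplicity one for the pro-$p$ piece, and the rest is formal bookkeeping with the decomposition $\bl=\bk\otimes\bt$.

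Two small remarks on the one-dimensionality step, which you rightly flag as the main obstacle. First, the framing as ``$(P,P^\sigma)$ is a Gelfand pair'' only gives the upper bound $\dim\Hom_{(\J^1)^\sigma}(\bk,\R)\leq 1$; what is actually needed is the sharper fact that a $\sigma$-self-dual irreducible representation of a finite (or pro-) $p$-group with $p$ odd is distinguished with multiplicity \emph{exactly} one, i.e.\ the nonvanishing must be argued separately (typically via the odd-order square-root decomposition $g=g_+g_-$ with $\sigma(g_+)=g_+$, $\sigma(g_-)=g_-^{-1}$, which produces a nondegenerate invariant pairing on $\bk|_{\J^1}$ and hence a nonzero functional on the $\sigma$-fixed part). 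Second, the input that $\bk|_{\J^1}$ is $\sigma$-self-dual is indeed supplied by Property (SSDT-III), and $\sigma$-stability of $\J^1$ together with normality of $\J^1$ in $\bJ$ gives that $(\J^1)^\sigma\trianglelefteq\bJ^\sigma$, as you use. With those points made precise, the tensor-product step — identifying $\Hom_{(\J^1)^\sigma}(\bk\otimes\bt,\R)$ with $\Hom_{(\J^1)^\sigma}(\bk,\R)\otimes\bt^\vee$ and taking $\bJ^\sigma$-invariants — is a clean reduction and gives exactly the stated canonical isomorphism.
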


In many cases, it is shown in \cite{Secherre} that one can choose $\chi_{\bk}=1$ above, including the supercuspidal case:

\begin{proposition}\label{prop existence of dist kappa}
Let $\pi$ be a $\sigma$-self-dual supercuspidal $\R$-representation and $(\bJ,\bl)$ be a $\sigma$-self-dual type of $\pi$, then 
one can choose $\bk$ such that $\chi_{\bk}=1$. 
\end{proposition}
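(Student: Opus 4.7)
The plan is to exploit the non-uniqueness of the decomposition $\bl=\bk\otimes\bt$: while $\bk\mid_{\J^1}$ is canonically determined as the unique extension to $\J^1$ of the Heisenberg representation attached to the underlying stratum, the $\bJ$-extension $\bk$ itself is only unique up to twisting by a character of $\bJ/\J^1$, and by Property (T-\ref{type bt unique when bt fixed}) any such twist can be compensated by a twist of $\bt$ to produce a second decomposition of the same type. I would show that one can choose this twist to be $\sigma$-self-dual and to trivialise $\chi_{\bk}$.

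Concretely, if $\mu$ is a character of $\bJ/\J^1$, set $\bk'=\bk\otimes\mu$ and $\bt'=\bt\otimes\mu^{-1}$. Then $\bk'\otimes\bt'=\bl$, so $(\bJ,\bl)$ is the same type with a new factorisation. The computation $(\bk\otimes\mu)^\sigma\simeq \bk^\vee\otimes\mu^\sigma$ and $(\bk\otimes\mu)^\vee\simeq \bk^\vee\otimes\mu^{-1}$ shows that $\bk'$ remains $\sigma$-self-dual precisely when $\mu^\sigma=\mu^{-1}$, and this same condition ensures that $\bt'$ is $\sigma$-self-dual as well, so $(\bJ,\bl=\bk'\otimes\bt')$ is still a $\sigma$-self-dual type in the sense of Definition~\ref{definition ssdtype}. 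By the uniqueness statement in Proposition~\ref{prop ssdtpe canonical iso} applied to $\bk'$, one immediately gets
\[\chi_{\bk'}=\chi_{\bk}\cdot\mu\mid_{\bJ^\sigma}.\]
Hence the proposition reduces to showing that the character $\chi_{\bk}^{-1}$ of $\bJ^\sigma/(\J^1)^\sigma$ extends to a $\sigma$-anti-invariant character of $\bJ/\J^1$.

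Next, I would describe both groups explicitly using (SSDT-\ref{ssdtpe1})--(SSDT-\ref{ssdtpe2}). By Property (T-\ref{Edefinition}), $\bJ/\J^1=\langle \overline{\w_\E}\rangle\ltimes\GL_{m(\pi)}(\kk_\E)$, so a character of $\bJ/\J^1$ is given by the pair $(\alpha,c)$ where $\alpha$ is a character of $\kk_\E^\times$ (applied through $\det$) and $c=\mu(\w_\E)\in\R^\times$. The $\sigma$-antiinvariance $\mu^\sigma=\mu^{-1}$ then translates into $\alpha^\sigma=\alpha^{-1}$ and a condition on $c$ depending on the sign $\sigma(\w_\E)=\pm\w_\E$ prescribed by (SSDT-\ref{ssdtpe2}). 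At the same time, I would describe the image of $\bJ^\sigma/(\J^1)^\sigma$ inside $\bJ/\J^1$, and check that in each case from Lemma~\ref{lemma no ssdual sc reps in bad cases} this image consists exactly of pairs on which any $(\alpha,c)$ with the required antiinvariance restricts freely, so that the extension problem is unobstructed.

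The main obstacle is precisely this last step in the ramified case $e_\sigma(\pi)=2$: here $\sigma$ acts trivially on $\kk_\E$, so a $\sigma$-antiinvariant character $\alpha$ of $\kk_\E^\times$ must satisfy $\alpha^2=1$, i.e.\ be quadratic or trivial. To prove that $\chi_{\bk}\mid_{\J^\sigma/(\J^1)^\sigma}$ (which a priori can be any character of a reductive-type group over $\kk_{\E_\so}$) is of this restrictive form, the supercuspidality of $\pi$ is essential: it ensures via Property (T-\ref{type bt unique when bt fixed}) that $\overline{\bt}$ is a genuinely supercuspidal representation of $\GL_{m(\pi)}(\kk_\E)$, and together with Lemma~\ref{lemma no ssdual sc reps in bad cases} this forces $m(\pi)=1$ (where $\bJ/\J^1$ reduces to a cyclic group and the extension is trivial to produce) or $m(\pi)$ even (where a direct calculation on the explicit Heisenberg extension $\bk$, combined with the fact from Lemma~\ref{lemma finite distinction vs s(s)duality}\ref{lemma14ii} that $\overline{\bt}$ is itself distinguished by the appropriate $\sigma$-fixed subgroup of $\GL_{m(\pi)}(\kk_\E)$, produces the required $\sigma$-antiinvariant extension). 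In the unramified case $e_\sigma(\pi)=1$ the argument is simpler since $\sigma$ acts non-trivially on $\kk_\E$ and there are ample $\sigma$-antiinvariant characters of $\kk_\E^\times$ to realise any prescription on $(\kk_\E^\times)^\sigma$ via Hilbert 90.
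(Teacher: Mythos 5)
The paper's own proof is a two-line reduction: use Lemma~\ref{lemma no ssdual sc reps in bad cases} to restrict to the possible values of $(e_\sigma(\pi),m(\pi))$, and then cite \cite[Propositions 6.15 and 8.10]{Secherre}, which are substantial results whose proofs rest on a detailed analysis of the symplectic structure underlying the Heisenberg representation and of Glauberman-type correspondences. Your proposal instead tries to be self-contained by twisting: starting from any $\sigma$-self-dual $\bk$, you replace $\bk$ by $\bk\otimes\mu$ and $\bt$ by $\bt\otimes\mu^{-1}$ with $\mu$ a character of $\bJ/\J^1$ satisfying $\mu^\sigma=\mu^{-1}$, and you correctly compute $\chi_{\bk\otimes\mu}=\chi_{\bk}\cdot\mu\mid_{\bJ^\sigma}$, reducing the claim to finding a $\sigma$-anti-invariant $\mu$ with $\mu\mid_{\bJ^\sigma}=\chi_{\bk}^{-1}$. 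That reduction is sound (modulo the check that $\bk\otimes\mu$ is again a $\beta$-extension), but the remaining step is exactly where the real content lies and your sketch does not close the gap.

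Two concrete problems. First, the constraint you are trying to solve is far more rigid than you acknowledge. If $\mu$ is $\sigma$-anti-invariant, then for $x\in\J^\sigma$ one has $\mu(x)=\mu(\sigma(x))=\mu(x)^{-1}$, so $\mu\mid_{\J^\sigma}$ is quadratic; in the unramified case $e_\sigma(\pi)=1$ the situation is even more constrained: on $\J/\J^1\simeq\GL_m(\kk_\E)$ any character factors through $\det$ as $\alpha\circ\det$, and $\alpha^\sigma=\alpha^{-1}$ forces $\alpha$ to be trivial on $\N_{\kk_\E/\kk_{\E_\so}}(\kk_\E^\times)=\kk_{\E_\so}^\times$, hence $\mu\mid_{\J^\sigma}=\alpha\mid_{\kk_{\E_\so}^\times}\circ\det\equiv 1$. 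So your twisting move cannot change $\chi_{\bk}\mid_{\J^\sigma}$ at all in the unramified case; it can only adjust the sign $\chi_{\bk}(\w_\E)$. Your statement that in the unramified case there are ``ample $\sigma$-antiinvariant characters to realise any prescription on $(\kk_\E^\times)^\sigma$ via Hilbert 90'' is therefore wrong: Hilbert 90 says precisely the opposite, namely that such characters restrict trivially to $\kk_{\E_\so}^\times$. The nontrivial fact that $\chi_{\bk}\mid_{\J^\sigma}$ is already trivial (for a suitable initial choice of $\bk$) is the content of \cite[Proposition 6.15]{Secherre}, and your argument gives no handle on it.

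Second, the step where you appeal to ``a direct calculation on the explicit Heisenberg extension $\bk$'' together with Lemma~\ref{lemma finite distinction vs s(s)duality}\ref{lemma14ii} to handle the ramified case is both vague and misapplied: in the ramified case $e_\sigma(\pi)=2$ one has $\kk_\E=\kk_{\E_\so}$, so $\kk_\E/\kk_{\E_\so}$ is not a quadratic extension and Lemma~\ref{lemma finite distinction vs s(s)duality}\ref{lemma14ii} is not the relevant statement (the relevant involution on $\GL_m(\kk_\E)$ there is an inner one and the correct distinction statement is the Levi one). Also, the observation that $\chi_{\bk}$ is quadratic is actually a formal consequence of $\bk^\sigma\simeq\bk^\vee$ (one has $\chi_{\bk^\sigma}=\chi_{\bk}$ and $\chi_{\bk^\vee}=\chi_{\bk}^{-1}$), not something requiring supercuspidality as you claim; what does require substantive input is showing that this quadratic character is actually in the image of the restriction map from $\sigma$-anti-invariant characters of $\bJ/\J^1$. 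In short, you have correctly reformulated the problem but not solved it; the serious work lives in \cite[Propositions 6.15 and 8.10]{Secherre}, which is precisely what the paper cites.
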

\begin{proof}
The only cases to consider are those which are not ruled out by Lemma \ref{lemma no ssdual sc reps in bad cases}, and the assertion then follows from \cite[Propositions 6.15 and 8.10]{Secherre}.
\end{proof}

\begin{remark}\label{remark dist of kappa reduces}
Note that if $\R=\Ql$ above, then $\pi$ is integral and $(\bJ,r_\ell(\bt))$ is a $\sigma$-self-dual type for $r_\ell(\pi)$. Moreover $\chi_{r_\ell(\bk)}=1$ if $\chi_{\bk}=1$. Indeed, thanks to Remark \ref{remark finite dist reduces} applied to 
$(\bJ/\F_\so^\times,\bk)$, the representation $r_\ell(\bk)$ is distinguished, hence $\chi_{r_\ell(\bk)}=1$ by the first part of Proposition \ref{prop ssdtpe canonical iso}.
\end{remark}

\subsection{Generic types and distinguished types}

There are in general more than one $\G_\so$-conjugacy class of $\sigma$-self-dual types in a $\sigma$-self-dual cuspidal $\R$-representation (see \cite[Section 1.11]{Secherre}). However there is only one $\G_\so$-conjugacy class among those which contain a  generic type in the following sense. 

We denote by~$\N$ be the maximal unipotent subgroup of the subgroup of upper triangular matrices in~$\G$, and~$\N_\so=\N^\sigma$. Let $\psi$ a nondegenerate character of $\N$. Note that such a character is always integral with nondegenerate reduction modulo $\ell$ as $\N$ is exhausted by its pro-$p$-subgroups.

\begin{definition} Let $(\bJ,\bl)$ be an $\R$-type, we say that $(\bJ,\bl)$ is a \textit{$\psi$-generic type} if 
\[\Hom_{\N\cap \bJ}(\bl,\psi)\neq \{0\}.\] We say that it is \emph{generic} if it is $\psi$-generic for some nondegenerate character of $\N$.
\end{definition}

\begin{remark}\label{remark twist of generic still generic}
Note that if $\mu$ is a character of $\G$ and $(\bJ,\bl)$ is a $\psi$-generic type, then $(\bJ,\mu\mid_{\bJ}\otimes \bl)$ is also 
$\psi$-generic.
\end{remark}

We will also use the following observation later.

\begin{lemma}\label{lemma lift of generic types}
If $(\bJ,\bl)$ is an integral $\Ql$-type and $\psi$ be a nondegenerate character of $\N$. If $(r_\ell(\bJ),r_\ell(\bl))$ is $r_\ell(\psi)$-generic, then 
$(\bJ,\bl)$ is $\psi$-generic.
\end{lemma}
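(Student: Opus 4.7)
The plan is to apply Lemma~\ref{lemma finite rel banal dist lifts} with $\K=\bJ$, $\K'=\N\cap\bJ$, $\rho=\bl$, and $\chi=\psi|_{\N\cap\bJ}$. The representation $\bl$ is integral by hypothesis and finite dimensional as a type, while $\psi$ is integral as recalled just before the definition of generic types. So the representation-theoretic hypotheses are met, and everything reduces to checking that the subgroup $\N\cap\bJ$ of $\bJ$ is a compact group of pro-order prime to~$\ell$.

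First I would show that $\N\cap\bJ=\N\cap\J$, which is automatically compact. Using the semidirect product decomposition $\bJ=\langle\w_\E\rangle\ltimes\J$ from property~(T-\ref{Edefinition}), any $g\in\bJ$ writes uniquely as $\w_\E^k j$ with $k\in\mathbb{Z}$ and $j\in\J$. If $g\in\N$, then $\det(g)=1$, while $\val_\F(\det(j))=0$ since $\J$ is compact and $\val_\F(\det(\w_\E))>0$ since $\w_\E$ is a uniformiser of~$\E$ (the embedding $\E\hookrightarrow\mathrm{M}_n(\F)$ sends $\w_\E$ to an element with positive $\F$-valuation of determinant). Hence $k=0$, so $g\in\J$.

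Next I would observe that any compact subgroup $C$ of the unipotent group $\N$ is pro-$p$: up to conjugating by a suitable power of a uniformiser of~$\F$, $C$ sits inside $\N\cap\GL_n(\oo)$, and the finite quotients of the latter by its congruence subgroups are $p$-groups since $q$ is a power of~$p$. Applying this to $C=\N\cap\J$ shows that $\N\cap\bJ$ is a compact pro-$p$ group, in particular of pro-order prime to~$\ell$.

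With these verifications in place, Lemma~\ref{lemma finite rel banal dist lifts} gives the equivalence
\[\Hom_{\N\cap\bJ}(\bl,\psi)\neq 0\iff\Hom_{\N\cap\bJ}(r_\ell(\bl),r_\ell(\psi))\neq 0,\]
and the implication claimed in the statement is a special case. The only mildly technical point is the compactness of $\N\cap\bJ$, which needs the explicit semidirect structure of $\bJ$ from Bushnell--Kutzko theory rather than the weaker fact that $\bJ$ is compact modulo centre; everything else is formal.
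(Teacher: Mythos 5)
Your argument is correct and follows exactly the paper's route: reduce to Lemma~\ref{lemma finite rel banal dist lifts} after observing that $\bJ\cap\N$ is a compact pro-$p$ group. The paper states this observation without proof, whereas you supply the details (the determinant-valuation argument showing $\bJ\cap\N=\J\cap\N$ and the pro-$p$ property of compact subgroups of $\N$), both of which are sound.
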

\begin{proof}
It is a consequence of Lemma \ref{lemma finite rel banal dist lifts}, once one observes that $\bJ\cap \N$ is a (compact) pro-$p$ group.
\end{proof}

If the type we consider is moreover $\sigma$-self-dual, we will only consider distinguished nondegenerate characters $\psi$ of $\N$, i.e. those which are trivial on $\N_\so$:

\begin{definition}
A $\sigma$-self-dual $\R$-type is called \emph{generic} if it is $\psi$-generic with respect to a distinguished nondegenerate character $\psi$ of $\N$.
\end{definition}

Our definition of a \textit{generic $\sigma$-self-dual type} coincides with the definition given in \cite[Definition 9.1]{Secherre} (see the discussion after \cite[Definition 5.7]{AKMSS}). There are two fundamental facts about these types, first they always occur in $\sigma$-self-dual cuspidal representations.

\begin{proposition}{\cite[Proposition 5.5]{AKMSS}}\label{prop existence of generic ssdual type}
Let $\pi$ be a $\sigma$-self-dual cuspidal $\R$-representation of $\G$ and let $\psi$ be a distinguished nondegenerate character of $\N$, then $\pi$ has a $\psi$-generic $\sigma$-self-dual type, which is moreover unique up to $\N_\so$-conjugacy.
\end{proposition}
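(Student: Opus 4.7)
The plan is to prove existence and uniqueness separately, in both parts exploiting uniqueness of the Whittaker model for $\pi$ together with the $\sigma$-self-dual type theory recalled in Section~4.2.

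\emph{Existence.} I would begin with an arbitrary $\sigma$-self-dual type $(\bJ_0,\bl_0)$ in $\pi$, whose existence is guaranteed by conditions (SSDT-\ref{ssdtpe1})--(SSDT-\ref{ssdtpe refined}). Since the cuspidal $\pi \simeq \ind_{\bJ_0}^{\G}(\bl_0)$ is generic, the geometric lemma yields
\[
\Hom_{\N}(\pi,\psi) \;\simeq\; \bigoplus_{g\in\bJ_0\backslash\G/\N}\Hom_{\bJ_0^g\cap\N}(\bl_0^g,\psi),
\]
so some double coset $\bJ_0 g\N$ contributes, producing a $\psi$-generic type $(\bJ_0^g,\bl_0^g)$. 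The nontrivial step is to arrange for this conjugate to remain $\sigma$-self-dual. Using that $\pi^\sigma\simeq\pi^\vee$ together with the hypothesis that $\psi$ is distinguished (so that $\sigma$ acts on the set of Whittaker data compatibly), one checks that $\sigma$ acts on $\bJ_0\backslash\G/\N$ and that the unique contributing double coset must be $\sigma$-stable. A Lang--Galois cohomological descent inside the pro-$p$ part of $\bJ_0 g\N$ then produces a representative $g$ with $\sigma(g)g^{-1}\in\bJ_0$, which is precisely what is needed for $(\bJ_0^g,\bl_0^g)$ to inherit the $\sigma$-self-dual structure in the sense of (SSDT-\ref{ssdtpe1}).

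\emph{Uniqueness.} Suppose $(\bJ_1,\bl_1)$ and $(\bJ_2,\bl_2) = (\bJ_1^g,\bl_1^g)$ are both $\psi$-generic $\sigma$-self-dual types in $\pi$, with $g\in\G$ by (T-1). Since $\dim\Hom_{\N}(\pi,\psi)=1$ and both types contribute non-trivially to the Mackey decomposition above, the contributing double cosets must coincide, forcing $g\in \bJ_1 \N$; after absorbing an element of $\bJ_1$ into the type data we may assume $g\in \N$. On the other hand, the $\sigma$-self-duality of both types, combined with Proposition~\ref{prop ssdtpe canonical iso}, gives $\sigma(g)g^{-1}\in \bJ_1$; since $\N$ is $\sigma$-stable, this places $\sigma(g)g^{-1}$ in the pro-$p$ group $\bJ_1\cap\N$, and a final Hilbert~90 / Lang-type argument there adjusts $g$ by an element of $\bJ_1\cap\N$ to land in $\N^\sigma = \N_\so$.

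\emph{Main obstacle.} The delicate point is the existence step. The $\psi$-generic and $\sigma$-self-dual conditions cut out, a priori, different subsets of the $\G$-conjugacy class of types in $\pi$ --- an $\N$-orbit in the first case, and (as noted in \cite[Section~1.11]{Secherre}) possibly several $\G_\so$-orbits in the second --- and the hypothesis that $\psi$ be distinguished is exactly what makes them intersect. Translating this abstract compatibility into a concrete adjustment of the conjugating element $g$, via Lang-style descent inside a pro-$p$ subgroup of $\G$, is the technical heart of the argument; everything else reduces to bookkeeping once the correct $\sigma$-stable maximal compact subgroup has been selected.
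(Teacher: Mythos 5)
The paper does not prove this proposition; it cites it as \cite[Proposition 5.5]{AKMSS}, so there is no internal proof to compare against, but I can assess the sketch on its own terms. You capture the expected ingredients — Mackey theory plus multiplicity one for Whittaker functionals, followed by a cohomological descent in a pro-$p$ group — and this is close in spirit to AKMSS. Your uniqueness argument is essentially sound: note the adjustment must be on the left (replace $g$ by $vg$ with $v\in\bJ_1\cap\N$, so that $\bJ_1^{vg}=\bJ_1^{g}$ and $\bl_1^{vg}\simeq\bl_1^{g}$ because $v\in\bJ_1$), and the needed vanishing of the nonabelian first cohomology of the $\sigma$-stable pro-$p$ group $\bJ_1\cap\N$ under the order-two involution holds because $p$ is odd. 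The assertion that the unique contributing double coset is $\sigma$-stable is also salvageable, but it is not automatic: applying $\sigma$ carries the Mackey datum $(\pi,\bl_0,\psi)$ to $(\pi^\sigma,\bl_0^\sigma,\psi^\sigma)\simeq(\pi^\vee,\bl_0^\vee,\psi^{-1})$, so $\sigma(g_0)$ is a priori the contributing coset for $\pi^\vee$. To identify it with $g_0$ one needs the extra observation that $\bl_0\mid_{\bJ_0\cap{}^{g}\N}$ is semisimple (because $\bJ_0\cap{}^{g}\N$ is pro-$p$ and $p\neq\ell$), whence nonvanishing of $\Hom$ with ${}^{g}\psi$ as target and as source agree and the contributing cosets for $\pi$ and $\pi^\vee$ coincide.

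The genuine gap is in the existence step. Arranging $\sigma(g)g^{-1}\in\bJ_0$ does \emph{not} make $(\bJ_0^{g},\bl_0^{g})$ a $\sigma$-self-dual type in the sense of (SSDT-\ref{ssdtpe1}): that definition also requires a $\sigma$-stable parameter field for the conjugated type. Putting $c=\sigma(g)g^{-1}\in\bJ_0$ one computes
\[
\sigma(\E^{g}) \;=\; \sigma(g)^{-1}\,\E\,\sigma(g) \;=\; g^{-1}\bigl(c^{-1}\E c\bigr)g,
\]
so $\E^{g}$ is $\sigma$-stable only if $c$ normalises $\E^{\times}$. Since $\J$ does not normalise $\E$, membership $c\in\bJ_0=\E^{\times}\J$ is not enough. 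The descent therefore has to be run inside a subgroup that also preserves the underlying simple stratum — this is where the real structural content of the AKMSS argument lies, and it is precisely the part your sketch dismisses as bookkeeping.
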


The second one concerns distinguished representations, which are $\sigma$-self-dual thanks to Proposition \ref{proposition p-adic mult 1 and ssduality}.

\begin{theorem}{\cite[Corollary 6.6]{AKMSS}, \cite[Theorem 9.3]{Secherre}}\label{thm dist type} Let $\pi$ be a $\sigma$-self-dual cuspidal $\R$-representation and $(\bJ,\bl)$ a generic $\sigma$-self-dual type of $\pi$. Then $\pi$ is distinguished if and only if $\bl$ is $\bJ^\sigma$-distinguished. 
\end{theorem}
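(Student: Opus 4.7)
The plan is to prove the equivalence in two directions. The ``if'' direction is a straightforward Mackey computation, while the ``only if'' direction relies on the rigidity of types together with the uniqueness of generic $\sigma$-self-dual types from Proposition \ref{prop existence of generic ssdual type}.

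\textbf{Sufficiency.} Since $\bJ$ is open in $\G$ and $\pi \simeq \ind_{\bJ}^{\G}(\bl)$, the Mackey decomposition for compact induction gives
\[\Hom_{\G_\so}(\pi, \R) \simeq \prod_{g \in \bJ \backslash \G / \G_\so} \Hom_{g^{-1}\bJ g \cap \G_\so}\bigl(\bl^g, \R\bigr).\]
The trivial double coset $g = 1$ contributes $\Hom_{\bJ^\sigma}(\bl, \R)$, since $\bJ$ is $\sigma$-stable by (SSDT-\ref{ssdtpe1}), so $\bJ \cap \G_\so = \bJ^\sigma$. Hence $\bJ^\sigma$-distinction of $\bl$ yields $\G_\so$-distinction of $\pi$.

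\textbf{Necessity.} Suppose $\pi$ is $\G_\so$-distinguished. By the same decomposition there exist $g \in \G$ and a nonzero linear form in $\Hom_{g^{-1}\bJ g \cap \G_\so}(\bl^g, \R)$, so $(g^{-1}\bJ g, \bl^g)$ is a ``distinguished type'' in $\pi$. The strategy is to produce, via further $\G$-conjugation, a type $(\bJ', \bl')$ in $\pi$ which is simultaneously $\sigma$-self-dual, $\psi$-generic for the distinguished nondegenerate character $\psi$ of $\N$ that was fixed at the outset, and $(\bJ')^\sigma$-distinguished. Once such $(\bJ', \bl')$ is constructed, the uniqueness statement in Proposition \ref{prop existence of generic ssdual type} furnishes $h \in \N_\so$ conjugating $(\bJ', \bl')$ onto $(\bJ, \bl)$; since $\N_\so \subset \G_\so$, this conjugation preserves distinction by the $\sigma$-fixed subgroup, and so $\bl$ is $\bJ^\sigma$-distinguished as required.

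Producing $(\bJ', \bl')$ splits into two substeps. First, one argues that the hereditary $\oo$-order underlying $\bJ$ may, after $\G$-conjugation, be assumed $\sigma$-stable; this rests on Proposition \ref{proposition p-adic mult 1 and ssduality} (distinction forces $\pi^\vee \simeq \pi^\sigma$), the uniqueness of the parameter field $\E$ up to $\G$-conjugacy, and the combinatorial analysis of self-dual lattice chains carried out in \cite{AKMSS} and \cite{Secherre}. After this conjugation one has $g^{-1}\bJ g \cap \G_\so = (g^{-1}\bJ g)^\sigma$, so $\bl^g$ is distinguished by the full $\sigma$-fixed subgroup, and the tensor factorisation $\bl = \bk \otimes \bt$ together with Proposition \ref{prop ssdtpe canonical iso} allow one to arrange that $\bl^g$ is itself $\sigma$-self-dual. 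Second, one further conjugates by an appropriate element of $\G_\so$ to meet the $\psi$-genericity condition, invoking the existence half of Proposition \ref{prop existence of generic ssdual type} to guarantee such a conjugation exists.

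\textbf{Main obstacle.} The crux is the first substep above: showing that the hereditary order attached to a distinguished type is forced to be $\sigma$-stable up to $\G$-conjugacy. This is what rules out distinction contributions from ``asymmetric'' double cosets in the Mackey sum and reduces the problem to the $\sigma$-stable setting, where Proposition \ref{prop ssdtpe canonical iso} and the finite-field analysis of Section \ref{sec:finite} take over. It is precisely at this step that the detailed structure theory of $\sigma$-stable simple strata developed in \cite{AKMSS} and \cite{Secherre} is indispensable; without it, one cannot a priori exclude that distinction of $\pi$ be supported on a Mackey term whose group $g^{-1}\bJ g$ bears no symmetry with respect to $\sigma$.
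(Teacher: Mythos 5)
This theorem is not proved in the paper; it is imported directly from \cite[Corollary 6.6]{AKMSS} and \cite[Theorem 9.3]{Secherre}, so there is no ``paper's own proof'' to compare against. That said, your outline does correctly capture the Mackey-theoretic strategy of the cited proofs. The sufficiency direction is complete and is precisely the argument the paper itself invokes in the proof of Lemma \ref{lemma distinguished implies generic} (``Mackey theory and Frobenius reciprocity''); note only that the equality $\bJ \cap \G_\so = \bJ^\sigma$ is the paper's notational convention and holds irrespective of $\sigma$-stability, so (SSDT-\ref{ssdtpe1}) is not what that step needs. The necessity direction is where the entire content of the theorem lives, and your proposal is an outline rather than a proof: you correctly reduce to showing that the double coset supporting the nonzero Mackey term can be brought, by conjugation, to the $\sigma$-stable and $\psi$-generic setting, and you correctly identify the $\sigma$-stability of the underlying hereditary order (equivalently, the exclusion of ``asymmetric'' double cosets) as the crux. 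But everything after that --- the analysis of $\sigma$-stable simple strata and self-dual lattice chains, and the argument that the distinguishing form cannot live on an asymmetric coset --- is deferred wholesale to \cite{AKMSS} and \cite{Secherre}. This is defensible given that the paper also cites the result, but you should be aware that the deferred step is not a routine reduction: it is the bulk of \cite[\S\S 5--6]{AKMSS} and \cite[\S 9]{Secherre}, and one small remark is that you do not need Proposition \ref{proposition p-adic mult 1 and ssduality} to obtain $\sigma$-self-duality of $\pi$, since that is already a hypothesis of the theorem.
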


\begin{definition}\label{definition dtype} We call a $\sigma$-self-dual type $(\bJ,\bl)$ a \textit{distinguished type} if~$\bl$ is~$\bJ^\sigma$-distinguished.
\end{definition}

We have the following surprising result, which completes Theorem \ref{thm dist type} and is evidence of the interplay between genericity and Galois distinction for~$\GL_n(\F)$:

\begin{lemma}\label{lemma distinguished implies generic}
A distinguished $\R$-type is automatically $\sigma$-self-dual generic. 
\end{lemma}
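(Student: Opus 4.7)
The plan is to pass through the cuspidal representation $\pi=\ind_\bJ^\G(\bl)$: first promote distinction of the type $(\bJ,\bl)$ to distinction of $\pi$, then bring in the generic $\sigma$-self-dual type of $\pi$ and use multiplicity one to match it with $(\bJ,\bl)$ up to $\G_\so$-conjugacy.

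By Definition \ref{definition dtype} a distinguished type is $\sigma$-self-dual and $\bl$ is $\bJ^\sigma$-distinguished; since $\bl^\vee\simeq \bl^\sigma$, the cuspidal representation $\pi$ is itself $\sigma$-self-dual. Because elements of $\ind_\bJ^\G(\bl)$ are compactly supported modulo $\bJ$, Frobenius reciprocity and Mackey yield a direct-product decomposition
\[\Hom_{\G_\so}(\pi,\R)\;=\;\prod_{g\in \bJ\backslash \G/\G_\so}\Hom_{\bJ\cap g\G_\so g^{-1}}(\bl,\R),\]
whose factor at $g=1$ is $\Hom_{\bJ^\sigma}(\bl,\R)$, nonzero by hypothesis. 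Hence $\pi$ is $\G_\so$-distinguished.

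Fix a distinguished nondegenerate character $\psi$ of $\N$. By Proposition \ref{prop existence of generic ssdual type}, $\pi$ admits a $\psi$-generic $\sigma$-self-dual type $(\bJ',\bl')$, and Theorem \ref{thm dist type} then forces $\bl'$ to be $(\bJ')^\sigma$-distinguished. By Property (T-1) there exists $g\in \G$ with $\bJ'=g\bJ g^{-1}$ and $\bl'(x)=\bl(g^{-1}xg)$. Transporting the Mackey decomposition to the description $\pi=\ind_{\bJ'}^\G(\bl')$ and translating the $h=1$ factor back to the original decomposition, distinction of $\bl'$ contributes a nonzero factor at the double coset $\bJ g^{-1}\G_\so$. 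Proposition \ref{proposition p-adic mult 1 and ssduality} bounds the total $\Hom$-space by one, so in this direct product at most one factor can be nonzero, forcing $\bJ\G_\so=\bJ g^{-1}\G_\so$, that is, $g\in \G_\so\bJ$. Writing $g=g_\so j$ with $g_\so\in \G_\so$ and $j\in \bJ$, and using that inner conjugation by $j\in \bJ$ preserves the isomorphism class of the type, $(\bJ,\bl)$ is $\G_\so$-conjugate to the generic type $(\bJ',\bl')$.

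To complete the proof we upgrade this $\G_\so$-conjugacy to genuine genericity of $(\bJ,\bl)$, appealing to the classification of $\G_\so$-conjugacy classes of $\sigma$-self-dual types recalled before Definition \ref{definition ssdtype} (and established in \cite{Secherre}): every element of the unique $\G_\so$-class containing a generic type is itself $\psi'$-generic for an appropriate distinguished nondegenerate character $\psi'$ of $\N$, obtained from $\psi$ by the natural action of $\T_\so$. The principal technical step of the plan is the Mackey plus multiplicity-one reduction; once this gives $\G_\so$-conjugacy, the remaining upgrade is a structural consequence of results already in place.
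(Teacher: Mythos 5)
Your first step, showing that a distinguished type induces a distinguished cuspidal representation by Mackey theory and Frobenius reciprocity, matches the paper exactly. Beyond that the two arguments diverge. The paper simply invokes \cite[Remark 6.7]{AKMSS} (together with the comparison with \cite[Definition 9.1]{Secherre}), which directly asserts that a $\sigma$-self-dual type whose underlying representation is $\bJ^\sigma$-distinguished, sitting inside a distinguished cuspidal, is automatically generic. You replace this black box with a Mackey-plus-multiplicity-one argument: produce a $\psi$-generic $\sigma$-self-dual type $(\bJ',\bl')$ via Proposition \ref{prop existence of generic ssdual type}, note that it is distinguished by Theorem \ref{thm dist type}, write $\bJ'=g\bJ g^{-1}$ by (T-1), and use Proposition \ref{proposition p-adic mult 1 and ssduality} to force $g\in \G_\so\bJ$. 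That part is a legitimate (and appealing) alternative, and it correctly establishes that $(\bJ,\bl)$ is $\G_\so$-conjugate to $(\bJ',\bl')$.

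The final step, however, is a genuine gap. You claim that any member of the $\G_\so$-conjugacy class containing a generic $\sigma$-self-dual type is itself $\psi'$-generic for some distinguished nondegenerate $\psi'$ of $\N$, ``obtained from $\psi$ by the natural action of $\T_\so$.'' This is not established anywhere in the paper, and the argument you sketch does not prove it: the property of being $\psi$-generic is defined relative to the \emph{fixed} maximal unipotent subgroup $\N$, so conjugating a $\psi$-generic type by $g_\so\in\G_\so$ yields a type that is generic with respect to $g_\so^{-1}\N g_\so$, not with respect to $\N$, unless $g_\so$ normalises $\N$, i.e. $g_\so\in \T_\so\N_\so$. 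Your Mackey argument only gives $g_\so\in\G_\so$, and nothing constrains it to lie in $\T_\so\N_\so$. The text before Definition \ref{definition ssdtype} that you cite says only that there is a unique $\G_\so$-class \emph{containing} a generic type; it does not say that every type in that class is generic. In short, your argument proves $\G_\so$-conjugacy to a generic type, which is strictly weaker than genericity of $(\bJ,\bl)$ itself; bridging that remaining distance is precisely what \cite[Remark 6.7]{AKMSS} supplies, and it is not ``a structural consequence of results already in place.''
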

\begin{proof}
Take a $\sigma$-self-dual 
type $(\bJ,\bl)$ such that $\bl$ is $\bJ^\sigma$-distinguished. Then $\ind_{\bJ}^{\G}(\bl)$ is distinguished by Mackey theory and Frobenius reciprocity. But then by \cite[Remark 6.7]{AKMSS} (and the equivalence of our definition of generic type with that given in \cite[Defintion 9.1]{Secherre}), the type $(\bJ,\bl)$ must be $\psi$-generic for some character distinguished nondegenerate character $\psi$ of $\N$.
\end{proof}

We end this section with the following important corollary of Proposition \ref{prop ssdtpe canonical iso}:

\begin{corollary}\label{cor of canonical iso} A $\sigma$-self-dual type $(\bJ,\bl)$ is a distinguished type 
if and only if $\Hom_{{\bJ}^\sigma}(\bk,\chi_{\bk})$ and $\Hom_{{\bJ}^\sigma}(\bt,\chi_{\bk}^{-1})$ are nonzero, in which case both are one dimensional.
\end{corollary}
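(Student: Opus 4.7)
The statement follows almost directly from Proposition \ref{prop ssdtpe canonical iso}, so the proof will essentially consist of reading off the two claims (equivalence and one-dimensionality) from the canonical isomorphism, supplemented by the multiplicity one results already collected in this paper.

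\textbf{Plan.} By Definition \ref{definition dtype}, the type $(\bJ,\bl)$ is distinguished if and only if $\Hom_{\bJ^\sigma}(\bl,\R)\neq 0$. The canonical isomorphism of Proposition \ref{prop ssdtpe canonical iso} identifies this Hom space with the tensor product
\[
\Hom_{\bJ^\sigma}(\bk,\chi_{\bk})\otimes\Hom_{\bJ^\sigma}(\bt,\chi_{\bk}^{-1}).
\]
Since a tensor product of finite-dimensional vector spaces is nonzero if and only if each factor is nonzero, this yields the claimed equivalence immediately. So the only real content left is the dimension statement.

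\textbf{One-dimensionality of the $\bk$-factor.} Proposition \ref{prop ssdtpe canonical iso} asserts the equality
\[
\Hom_{(\J^1)^\sigma}(\bk,\R)=\Hom_{\bJ^\sigma}(\bk,\chi_{\bk}),
\]
together with the uniqueness of the character $\chi_{\bk}$. I would simply invoke that the left-hand side is one-dimensional: this is a standard property of the Heisenberg restriction $\bk|_{\J^1}$ underlying the construction of $\sigma$-self-dual types in \cite{AKMSS} and \cite{Secherre}, and it is in fact already implicit in the uniqueness of $\chi_{\bk}$.

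\textbf{One-dimensionality of the $\bt$-factor.} Both $\bt$ and $\chi_{\bk}^{-1}$ are trivial on $\J^1$, so the Hom space factors through the finite quotient $\bJ^\sigma/(\J^1)^\sigma$. Using property (T-\ref{Edefinition}) and the $\sigma$-invariant choice of uniformiser in (SSDT-\ref{ssdtpe2}), the representation of $\bJ/\J^1$ induced by $\bt$ identifies with an essentially cuspidal representation of the finite group $\bJ/\J^1\simeq\GL_m(\kk_\E)\times\langle\w_\E\rangle$ (trivial on the second factor up to twist), whose restriction to fixed points under $\sigma$ is controlled by $\GL_m(\kk_\E)^\sigma=\GL_m(\kk_{\E_\so})$ in the unramified case, and by an analogous Galois-type symmetric pair in the ramified case. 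Multiplicity one of the $\bt$-factor then follows from Proposition \ref{prop finite multiplicity 1}(i), applied to this finite cuspidal representation and the relevant character.

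\textbf{Main obstacle.} The step requiring care is the reduction of the $\bt$-factor to the finite-field multiplicity one statement: one must verify that $\bJ^\sigma/(\J^1)^\sigma$ maps to $(\bJ/\J^1)^\sigma$ with image large enough that the finite $\GL_m(\kk_\E)$-mult\-iplicity one result applies, and in the ramified case one must correctly handle the contribution of $\w_\E$ (which satisfies $\sigma(\w_\E)=-\w_\E$) and the resulting twist by the character $\chi_{\bk}^{-1}$. Both points are already treated in \cite{Secherre}, so the cleanest writeup will simply cite the relevant computation there.
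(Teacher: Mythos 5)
Your plan — read off the equivalence directly from the tensor-product isomorphism of Proposition \ref{prop ssdtpe canonical iso}, then separately bound each factor by a multiplicity one result — is exactly what the corollary rests on; the paper does not spell out the proof but uses these same bounds implicitly (and explicitly later, in the proof of Theorem \ref{thm Xso}, where $\Hom_{\J^\sigma}(\bt,\chi_{\bk}^{-1})$ is bounded by Proposition \ref{prop finite multiplicity 1}). Two small corrections to the $\bt$-factor step: when $e_\sigma(\pi)=2$ the involution induced on $\J/\J^1\simeq\GL_m(\kk_\E)$ has as fixed points a maximal Levi $\GL_{m/2}(\kk_\E)\times\GL_{m/2}(\kk_\E)$ (not a Galois pair), so you need part (ii) of Proposition \ref{prop finite multiplicity 1}, not only part (i); and the reduction of $\J^\sigma$-invariants to $(\J/\J^1)^\sigma$-invariants uses that the natural map $\J^\sigma/(\J^1)^\sigma\to(\J/\J^1)^\sigma$ is an isomorphism, which holds because $\J^1$ is pro-$p$ with $p$ odd (vanishing of $H^1$). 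With these adjustments your argument is correct and matches the intended one.
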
 

\subsection{The relative torsion group of a distinguished representation} 

For the rest of this section we fix $\pi$ a distinguished cuspidal $\R$-representation and $(\bJ,\bl)$ a distinguished type in~$\pi$. 
We set~$\w_{\E_\so}=\w_{\E}^2$ if~$\E/\E_\so$ is ramified and~$\w_{\E_\so}=\w_{\E}$ if~$\E/\E_\so$ is unramified. When~$e_\sigma(\pi)=2$ and~$m(\pi)=2r$ is even, we denote by~$w$ the element of~$\bJ$ corresponding to~$\left(\begin{smallmatrix}0&1_r\\1_r&0\end{smallmatrix}\right)$ as in \cite[Lemma 6.19]{Secherre}.  We define the \textit{relative torsion group of $\pi$} to be the following group:
\[\X_\so(\pi)=\{\mu_\so \in \Hom(\G_\so,\R^\times): \mu_\so \ \text{is unramified}, \ \Hom_{\G_\so}(\pi,\mu_\so)\neq \{0\}\}.\]

\begin{theorem}\label{thm Xso} 
Let~$\pi$ be a cuspidal distinguished~$\R$-representation of~$\G$ and set $\w'=\w_Ew$ if ~$e_\sigma(\pi)=2$ and~$m(\pi)$ is even and 
$\w'=\w_{\E_\so}$ otherwise. Then we have:
\begin{enumerate}
\item Let~$\mu_\so$ be an unramified character of~$\G_\so$, then~$\mu_\so\in \X_\so(\pi)$ if and only if $\mu_\so(\w')=1$. 
\item Let $\chi_\so$ be an unramified character of $\F_\so^\times$, then $\chi_\so\circ \det \in \X_\so(\pi)$ if and only if~$\chi_\so(\w_\so)^{n/e_\so(\pi)}=1$.
\end{enumerate}
\end{theorem}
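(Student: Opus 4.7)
The plan has two main parts: first I would establish Part~(i) by reducing $\mu_\so$-distinction of $\pi$ to a linear-algebra condition on the distinguished type $(\bJ,\bl)$, and then I would derive Part~(ii) from Part~(i) via a direct computation of $\det(\w')$.

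The first step in Part~(i) is to extend Theorem~\ref{thm dist type} to $\mu_\so$-twisted distinction, i.e.\ to prove
\[\pi\text{ is }\mu_\so\text{-distinguished}\iff\bl\text{ is }(\bJ^\sigma,\mu_\so|_{\bJ^\sigma})\text{-distinguished}.\]
Because $\mu_\so$ is unramified it is trivial on every compact subgroup of $\G_\so$, and in particular on every intersection $\bJ\cap g\G_\so g^{-1}$ that appears in the Mackey decomposition of $\pi=\ind_\bJ^\G\bl$; hence the vanishing of non-identity double-coset contributions established in \cite{AKMSS} and \cite{Secherre} for the $\mu_\so=1$ case persists, and the restriction map $L\mapsto L|_\bl$ yields the equivalence above. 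Next, by the case analysis in the statement $\bJ^\sigma=\J^\sigma\rtimes\langle\w'\rangle$, and because $\mu_\so$ is unramified $\mu_\so|_{\J^\sigma}=1$; thus $\mu_\so|_{\bJ^\sigma}$ is determined entirely by $\mu_\so(\w')$.

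I then apply the natural $\mu_\so$-twisted analogue of Proposition~\ref{prop ssdtpe canonical iso}: since $\mu_\so|_{(\J^1)^\sigma}=1$, uniqueness of $\chi_\bk$ forces the canonical map
\[\Hom_{\bJ^\sigma}(\bk,\chi_\bk)\otimes\Hom_{\bJ^\sigma}(\bt,\chi_\bk^{-1}\mu_\so|_{\bJ^\sigma})\longrightarrow\Hom_{\bJ^\sigma}(\bl,\mu_\so|_{\bJ^\sigma})\]
to be an isomorphism. Since $\bl$ is distinguished the first tensor factor is nonzero, so the question reduces to the $(\bJ^\sigma,\chi_\bk^{-1}\mu_\so|_{\bJ^\sigma})$-distinction of $\bt$. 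Now $\bt$ descends to a cuspidal representation of $\J/\J^1\cong\GL_m(\kk_\E)$, and $\J^\sigma/(\J^1)^\sigma$ sits inside $\GL_m(\kk_\E)$ as $\GL_m(\kk_{\E_\so})$ when $e_\sigma(\pi)=1$, as $\GL_1(\kk_\E)$ when $e_\sigma(\pi)=2$ and $m(\pi)=1$, and as $\GL_r(\kk_\E)\times\GL_r(\kk_\E)$ when $e_\sigma(\pi)=2$ and $m(\pi)=2r$. In all three cases Proposition~\ref{prop finite multiplicity 1} gives $\dim\Hom_{\J^\sigma}(\bt,\chi_\bk^{-1}|_{\J^\sigma})\leqslant 1$, and this space is nonzero since $\bl$ is distinguished; hence it is one-dimensional, and $\w'$ necessarily acts on it by the scalar $\chi_\bk^{-1}(\w')$. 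Consequently $\bt$ is $(\bJ^\sigma,\chi_\bk^{-1}\mu_\so|_{\bJ^\sigma})$-distinguished if and only if $\mu_\so(\w')=1$, proving Part~(i).

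For Part~(ii), specialising to $\mu_\so=\chi_\so\circ\det$, Part~(i) becomes $\chi_\so(\det\w')=1$. The identities $\det(\w_\E)=N_{\E/\F}(\w_\E)^{m(\pi)}$, $\val_\F(N_{\E/\F}(\w_\E))=f(\E/\F)$ and $m(\pi)f(\E/\F)=n/e(\E/\F)$ give $\val_\F(\det\w_\E)=n/e(\E/\F)$, and dividing by $e(\F/\F_\so)$ yields $\val_\so(\det\w_\E)=n/e(\E/\F_\so)$. Combining this with $e(\E/\F_\so)=e_\sigma(\pi)\,e(\E_\so/\F_\so)$, the case-wise formula for $e_\so(\pi)$ recalled above, and the fact that $\det(w)=(-1)^r$ is a unit, a straightforward case-by-case check shows $\val_\so(\det\w')=n/e_\so(\pi)$ in each of the three cases distinguished in the statement; since $\chi_\so$ is unramified, this gives $\chi_\so(\det\w')=\chi_\so(\w_\so)^{n/e_\so(\pi)}$, and Part~(ii) follows immediately. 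The main technical obstacle in the argument is the $\mu_\so$-twisted extensions of Theorem~\ref{thm dist type} and Proposition~\ref{prop ssdtpe canonical iso}: both are essentially formal given the unramifiedness of $\mu_\so$, but they require careful verification that the proofs in \cite{AKMSS} and \cite{Secherre} carry over verbatim under the twist.
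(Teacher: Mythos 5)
The proof you propose is correct in its conclusion and in Part (ii), but there is a genuine gap in your proposed proof of the first step of Part (i), namely the $\mu_\so$-twisted version of Theorem \ref{thm dist type}.

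You justify it by claiming that because $\mu_\so$ is unramified it is trivial on every compact subgroup of $\G_\so$, "in particular on every intersection $\bJ\cap g\G_\so g^{-1}$." This is false: the relevant intersection $g^{-1}\bJ g\cap\G_\so$ is only compact modulo centre, since $\bJ\supset\F^\times$ and hence $g^{-1}\bJ g\cap\G_\so\supset\F_\so^\times$, and an unramified $\mu_\so$ is certainly not trivial on $\F_\so^\times$. So the passage from the untwisted vanishing of Mackey terms to the twisted one is not the formality you assert. (The same issue lurks in your appeal to a "$\mu_\so$-twisted analogue" of Proposition \ref{prop ssdtpe canonical iso}.) The paper sidesteps this entirely with a cleaner reduction: pick an unramified extension $\mu$ of $\mu_\so$ to $\G$; then $\mu_\so\in\X_\so(\pi)$ iff $\mu\otimes\pi$ is distinguished, the pair $(\bJ,\bk\otimes(\mu\otimes\bt))$ is a $\sigma$-self-dual $\psi$-generic type for $\mu\otimes\pi$ by Remark \ref{remark twist of generic still generic}, and one applies the \emph{unmodified} Theorem \ref{thm dist type} to the representation $\mu\otimes\pi$. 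Combined with Corollary \ref{cor of canonical iso} and the finite multiplicity-one statement Proposition \ref{prop finite multiplicity 1}, this gives $\mu_\so\in\X_\so(\pi)$ iff $\Hom_{\bJ^\sigma}(\mu\otimes\bt,\chi_\bk^{-1})=\Hom_{\bJ^\sigma}(\bt,\chi_\bk^{-1})$, which unwinds to $\mu(\w')=1$ exactly as you describe. So your twisted extensions are true statements, but they should be \emph{deduced} from the untwisted ones by absorbing the unramified twist into the representation, not by re-examining the Mackey double cosets — which, as written, would not go through.

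Your Part (ii) is essentially the paper's computation; the only cosmetic issue is writing $\val_\so(\det\w_\E)$ in the ramified case, where $\w_\E\notin\G_\so$, so one should either compute $\chi(\det_\F\w_\E)$ for an unramified extension $\chi$ of $\chi_\so$ (as the paper does) or compute $\val_\so(\det_{\F_\so}\w')$ directly.
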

\begin{proof}
Note that if~$e_\sigma(\pi)=2$, then~$m(\pi)$ is even or equal to~$1$ thanks to Lemma \ref{lemma no ssdual sc reps in bad cases}. 
Then~$\bJ^\sigma$ 
is generated by~$\w'$ and~$\J^\sigma$, thanks to \cite[Lemmas 6.18, 6.19 and 8.7]{Secherre}. Let $\mu_\so$ be an unramified character of $\G_\so$ and denote by $\mu$ an unramified extension of it to 
$\G$, hence $\mu_\so\in \X_\so(\pi)$ if and only if $\mu\otimes \pi$ is distinguished. Suppose that 
$\mu\otimes \pi$ is distinguished, then $(\bJ,\bk \otimes (\mu\otimes \bt))$ is a $\sigma$-self-dual type which is in fact a distinguished type thanks to Remark \ref{remark twist of generic still generic} and Theorem \ref{thm dist type}, and conversely if 
$(\bJ,\bk \otimes (\mu\otimes \bt))$ is a distinguished type, then $\mu\otimes \pi$ is distinguished. So 
$\mu_\so\in \X_\so(\pi)$ if and only if $(\bJ,\bk \otimes (\mu\otimes \bt))$ is a distinguished type, which is if and only if 
$\Hom_{\bJ^\sigma}(\mu\otimes\bt,\chi_{\bk}^{-1})$ has dimension~$1$ according to Corollary \ref{cor of canonical iso}.

However~$\Hom_{\bJ^\sigma}(\bt,\chi_{\bk}^{-1})$ has dimension~$1$ by the same corollary, but $\Hom_{\J^\sigma}(\bt,\chi_{\bk}^{-1})$ is already at most one dimensional thanks to Proposition \ref{prop finite multiplicity 1}, so from these multiplicity one statements we deduce that $\mu_\so\in \X_\so(\pi)$ if and only if $\Hom_{\bJ^\sigma}(\mu\otimes\bt,\chi_{\bk}^{-1})=\Hom_{\bJ^\sigma}(\bt,\chi_{\bk}^{-1})$. Finally this translates as: $\mu_\so\in \X_\so(\pi)$ if and only if ~$\mu(\w')=1$ and proves (i).

Now let~$\chi_\so$ be an unramified chareter of $\F_\so^\times$ and let $\chi$ be an unramified character of $\F^\times$ extending it. If $e_\sigma(\pi)=2$ then $\F/\F_\so$ is ramified according to \cite[Lemma 4.14]{Secherre}.  

Suppose that $e_\sigma(\pi)=2$ and ~$m(\pi)$ is even,  
we have: 
\[\chi_\so(\det(w'))=\chi(\det(\w_E))=\chi(\N_{\E /\F }(\w_{\E}))^{m(\pi)}=\chi(\w)^{f(\E/F)m(\pi)}=\chi(\w)^{n/e(\E/\F)}\]
\[=\chi(\w)^{ne(\F/\F_\so)/e_\sigma(\pi)e(\E_\so/\F_\so)}=\chi(\w)^{n/e(\E_\so/\F_\so)}=\chi(\w_\so)^{n/2e(\E_\so/\F_\so)}= 
\chi(\w_\so)^{n/e_\so(\pi)}\] thanks to 
\cite[Lemma 5.10]{AKMSS}. 
Otherwise we have:
\[\chi_\so(\det(w'))=\chi_\so(\N_{\E /\F}(\w_{E_\so}))^{m(\pi)}=\chi_\so(\N_{\E_\so /\F_\so}(\w_{E_\so}))^{m(\pi)}=\chi_\so(\w_\so)^{f(\E_\so/\F_\so)m(\pi)}\]
\[=\chi_\so(\w_\so)^{n/e(\E_\so/\F_\so)}=\chi(\w_\so)^{n/e_\so(\pi)}\] thanks to 
\cite[Lemma 5.10]{AKMSS} again. 
\end{proof}

It has the following two corollaries.

\begin{corollary}\label{cor cardinality of Xso}
Let~$\pi$ be a distinguished cuspidal~$\R$-representation of~$\G$. Write~$n/e_\so(\pi)=a(\pi)\ell^r$ with~$a(\pi)$ prime to~$\ell$. Then~$\X_\so(\pi)$ is a cyclic group, of order~$n/e_\so(\pi)$ if $\R=\Ql$, and of order~$a(\pi)$ if $\R=\Fl$.
\end{corollary}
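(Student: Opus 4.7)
The plan is to deduce the corollary directly from Theorem \ref{thm Xso}(ii), which already gives an explicit equation characterising membership in $\X_\so(\pi)$. The only two additional ingredients needed are the description of the group of unramified characters of $\G_\so$ and a root-of-unity count in $\R^\times$.

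First, I would observe that every unramified character of $\G_\so$ is of the form $\chi_\so\circ\det$ for an unramified character $\chi_\so$ of $\F_\so^\times$. For $n\ge 2$ this follows because $\SL_n(\F_\so)$ coincides with its own commutator subgroup, so any character of $\G_\so$ is trivial on it and therefore factors through $\det:\G_\so\rightarrow \F_\so^\times$; the case $n=1$ is tautological. Unramifiedness is clearly preserved under this bijection. Consequently Theorem \ref{thm Xso}(ii) describes \emph{all} of $\X_\so(\pi)$, not merely those unramified characters of the form $\chi_\so\circ\det$.

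Next, I would use the standard identification of the group of unramified $\R$-valued characters of $\F_\so^\times$ with $\R^\times$ via evaluation at the uniformiser $\w_\so$. Writing $N=n/e_\so(\pi)$, Theorem \ref{thm Xso}(ii) then yields an isomorphism
\[
\X_\so(\pi)\;\simeq\;\mu_N(\R):=\{z\in\R^\times : z^N=1\},
\]
so the corollary reduces to counting $N$-th roots of unity in $\R^\times$. Over $\R=\Ql$, which is algebraically closed of characteristic zero, $\mu_N(\Ql)$ is cyclic of order exactly $N=n/e_\so(\pi)$. Over $\R=\Fl$, which has characteristic $\ell$, writing $N=a(\pi)\ell^r$ with $(a(\pi),\ell)=1$, the identity $z^{\ell^r}=1$ forces $z=1$ since $(z-1)^{\ell^r}=z^{\ell^r}-1=0$; hence $\mu_N(\Fl)=\mu_{a(\pi)}(\Fl)$ is cyclic of order $a(\pi)$.

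There is no real obstacle here: the delicate work has already been carried out in Theorem \ref{thm Xso}. The only subtle point is the first step (unramified characters of $\G_\so$ factoring through $\det$), but this is a standard consequence of the perfectness of $\SL_n$ over a field.
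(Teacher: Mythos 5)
Your proof is correct and follows essentially the same route as the paper intends: since the paper gives no explicit proof, the corollary is clearly meant as an immediate consequence of Theorem \ref{thm Xso}(ii) together with the standard fact that characters of $\G_\so$ factor through $\det$ (commutator subgroup $\SL_n(\F_\so)$) and a count of $N$-th roots of unity in $\Ql^\times$ versus $\Fl^\times$, which is exactly what you do. Your extra care in spelling out the bijection between unramified characters of $\G_\so$ and $\mu_N(\R)$ via evaluation at $\w_\so$, and the factorisation $x^N-1=(x^{a(\pi)}-1)^{\ell^r}$ in characteristic $\ell$, fills in details the paper leaves implicit but introduces nothing new.
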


\begin{corollary}\label{cor reduction mod ell on Xso}
Let~$\pi$ be a distinguished cuspidal (hence integral)~$\Ql$-representation of~$\G$. Then the homomorphism  
\[r_\ell:\mu_\so\mapsto r_\ell(\mu_\so)\] is surjective from $\X_\so(\pi)$ to $\X_\so(r_\ell(\pi))$, and its kernel is 
the $\ell$-singular part of $\X_\so(\pi)$.
\end{corollary}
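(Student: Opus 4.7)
The approach is to identify both $\X_\so(\pi)$ and $\X_\so(r_\ell(\pi))$ explicitly via Theorem \ref{thm Xso}(ii), and then observe that $r_\ell$ corresponds to the elementary reduction-of-roots-of-unity map. Every unramified character of $\G_\so$ factors through $\det$ and is determined by its value at $\w_\so$. Setting $N=n/e_\so(\pi)$, Theorem \ref{thm Xso}(ii) therefore gives an isomorphism from $\X_\so(\pi)$ to the group of $N$-th roots of unity in $\Ql^\times$, sending $\chi_\so\circ\det$ to $\chi_\so(\w_\so)$. By the compatibility $e_\so(\pi)=e_\so(r_\ell(\pi))$ recalled in the discussion following Definition \ref{definition ssdtype}, the same identification realises $\X_\so(r_\ell(\pi))$ as the group of $N$-th roots of unity in $\Fl^\times$, which coincides with the group of $a(\pi)$-th roots of unity in $\Fl^\times$ since $N=a(\pi)\ell^r$ and the only $\ell$-power root of unity in $\Fl^\times$ is $1$.

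Under these identifications, $r_\ell$ becomes the usual reduction-modulo-$\ell$ map on roots of unity: every root of unity in $\Ql^\times$ lies automatically in $\Zl^\times$ (so integrality of $\chi_\so\in\X_\so(\pi)$ is free), and $r_\ell(\chi_\so\circ\det)$ is the unramified character of $\G_\so$ sending $\w_\so$ to $r_\ell(\chi_\so(\w_\so))$. Decomposing the cyclic group of $N$-th roots of unity in $\Ql^\times$ as the internal direct product of its $\ell$-regular part (the $a(\pi)$-th roots of unity) and its $\ell$-singular part (the $\ell^r$-th roots of unity), the former maps isomorphically onto the group of $a(\pi)$-th roots of unity in $\Fl^\times$ (because $a(\pi)$ is prime to $\ell$), while the latter is killed by reduction. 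This yields at once the surjectivity of $r_\ell$ and identifies its kernel with the $\ell$-singular part of $\X_\so(\pi)$.

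I do not expect any serious obstacle: the statement is a formal consequence of Theorem \ref{thm Xso} together with the invariance $e_\so(\pi)=e_\so(r_\ell(\pi))$. The only point worth double-checking is that the natural reduction map on unramified characters of $\F_\so^\times$ really does coincide with the abstract reduction of roots of unity under the parametrisation $\chi_\so\leftrightarrow \chi_\so(\w_\so)$, which holds by the definition of reduction of integral characters and because the character is entirely determined by this single value.
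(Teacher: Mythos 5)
Your proof is correct and follows essentially the same approach as the paper: both rest on Theorem \ref{thm Xso}, the compatibility $e_\so(\pi)=e_\so(r_\ell(\pi))$, and the fact that reduction modulo $\ell$ is a bijection on $\ell$-regular roots of unity. The only presentational difference is that you establish surjectivity directly through the explicit root-of-unity parametrisation, whereas the paper deduces it by counting from the kernel computation via Corollary \ref{cor cardinality of Xso}.
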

\begin{proof}
It suffices to verify the assertion on the kernel. It is clear that the $\ell$-singular part of $\X_\so(\pi)$ belongs to the kernel of $r_\ell$. Conversely if $r_\ell(\mu_\so)=1$, write $\mu_\so=(\mu_\so)_r(\mu_\so)_s$ with $(\mu_\so)_r$ of order prime to $\ell$ and $(\mu_\so)_s$ of order a power of $\ell$, then $r_\ell((\mu_\so)_r)=r_\ell(\mu_\so)=1$ so $(\mu_\so)_r=1$ because 
$r_\ell$ induces a bijection between the group of roots of unity of order prime to $\ell$ in $\Ql^\times$ and 
the group of roots of unity in $\Fl^\times$.
\end{proof}

\section{Relatively banal cuspidal representations of~$p$-adic~$\GL_n$}\label{section relatively bana reps}

In \cite{MSDuke} and \cite{MSbanal}, M\'inguez and S\'echerre single out a class of irreducible representations called \emph{banal} for which the Zelevinski classification works particularly nicely.  For cuspidal representations, the following definition can be given (\cite[Remarque 8.15]{MSDuke} and \cite[Lemme 5.3]{MSDuke}).
\begin{definition}
A cuspidal~$\Fl$-representation~$\pi$ is called \emph{banal} if~$q^{n/e(\pi)}\not\equiv 1[\ell]$.  
\end{definition}

The following definition is new and is motivated by our cuspidal~$\L$-factor computation later and an analogy with banal cuspidal representations and the Rankin--Selberg computation of \cite{KMNagoya}.  We show in Section \ref{section comparison banal rel banal} how it is a natural analogue of banal for the symmetric pair~$(\G,\G_\so,\sigma)$.

\begin{definition}\label{definition relbanaldef}
Let~$\pi$ be a distinguished cuspidal~$\Fl$-representation. We say that it is \emph{relatively banal} if~$q_\so^{n/e_\so(\pi)}\not \equiv 1[\ell]$.
\end{definition}

Theorem \ref{thm Xso} (ii) has the following third consequence:

\begin{corollary}[of Theorem \ref{thm Xso}]\label{cor relatively banal simple definition}
Let~$\pi$ be a distinguished cuspidal~$\Fl$-representation, it is relatively banal if and only if 
it is not $|\det(\ \cdot \ )|_{\so}$-distinguished.
\end{corollary}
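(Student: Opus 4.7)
The plan is to apply Theorem~\ref{thm Xso}(ii) directly with the unramified $\Fl$-valued character $\chi_\so = |\cdot|_\so$ of $\F_\so^\times$. Note that this character does make sense over $\Fl$ because $|\cdot|_\so$ takes values in the group generated by $q_\so$, which is prime to $\ell$ and hence has a well-defined image in $\Fl^\times$ under the reduction map. The observation that drives the whole argument is that saying $\pi$ is $|\det(\,\cdot\,)|_\so$-distinguished is exactly the same as saying $\chi_\so \circ \det \in \X_\so(\pi)$, so Theorem~\ref{thm Xso}(ii) provides the necessary and sufficient condition.

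Concretely, I would compute $\chi_\so(\w_\so) = q_\so^{-1}$ in $\Fl^\times$, and then Theorem~\ref{thm Xso}(ii) says that
\[
\chi_\so \circ \det \in \X_\so(\pi) \iff \chi_\so(\w_\so)^{n/e_\so(\pi)} = 1 \iff q_\so^{-n/e_\so(\pi)} = 1 \text{ in } \Fl^\times,
\]
and this last equality is equivalent to $q_\so^{n/e_\so(\pi)} \equiv 1 \pmod{\ell}$.

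Taking the contrapositive, $\pi$ fails to be $|\det(\,\cdot\,)|_\so$-distinguished if and only if $q_\so^{n/e_\so(\pi)} \not\equiv 1 \pmod{\ell}$, which by Definition~\ref{definition relbanaldef} is exactly the condition that $\pi$ be relatively banal. There is essentially no obstacle to overcome: all of the substantive content is contained in Theorem~\ref{thm Xso}(ii), and the corollary is simply the numerical specialisation to the absolute value character. The only point deserving a brief remark is the well-definedness of $\chi_\so$ as an $\Fl$-valued character, which is clear since its image lies in the prime-to-$\ell$ part of $\Ql^\times$ (indeed, $q_\so$ is a power of $p \neq \ell$).
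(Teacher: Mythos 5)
Your proof is correct and is exactly the argument the paper intends when it presents the corollary as a direct consequence of Theorem~\ref{thm Xso}(ii): take $\chi_\so = |\cdot|_\so$ viewed over $\Fl$, note $\chi_\so(\w_\so) = q_\so^{-1}$, and observe that membership of $\chi_\so \circ \det$ in $\X_\so(\pi)$ is by definition the same as $|\det(\,\cdot\,)|_\so$-distinction. The aside on well-definedness of $|\cdot|_\so$ over $\Fl$ is a harmless extra sentence, correct and not in tension with anything in the paper.
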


Before stating the next lemma, we make the following observation which shows that
the statement of the lemma in question (Lemma 6.6) is indeed complete.

\begin{remark}\label{remark CRASremark}
For any character~$\chi$ of~$\G_\so$, there are no~$\chi$-distinguished cuspidal~$\R$-representations~$\pi$ of~$\G$ with~$e_\sigma(\pi)=2$ when~$m(\pi)\geqslant 3$ is odd, by Proposition \ref{prop ssdtpe canonical iso} and \cite[Lemma 6.9]{Secherre}. 
\end{remark}
While we have defined relatively banal distinguished representations in terms of the invariant~$e_\so(~)$, we will use the following equivalent formulation:

\begin{lemma}\label{comparison}
Let~$\pi$ be a~$\sigma$-self-dual cuspidal~$\R$-representation of~$\G$.  Let~$\E$ be a~$\sigma$-self-dual parameter field for~$\pi$.  Then
\begin{enumerate}
\item\label{comparisoni} If~$e_\sigma(\pi)=1$, then~$q_\so^{n/e_\so(\pi)}=q_{\E_\so}^{m(\pi)}$ (and is also equal to~$q_{\so}^{n/e(\pi)}$ if~$\F/\F_\so$ is unramified and~$q^{n/2e(\pi)}$ if~$\F/\F_\so$ is ramified).
\item\label{comparisonii} If~$e_\sigma(\pi)=2$ and~$m(\pi)=1$, then~$q_\so^{n/e_\so(\pi)}=q_{\E_\so}$ (and is also equal to $q^{n/e(\pi)}=q_{\E}$). 
\item\label{comparisoniii}  If~$e_\sigma(\pi)=2$ and~$m(\pi)\geqslant 2$ is even, then~$q_\so^{n/e_\so(\pi)}=q_{\E_\so}^{m(\pi)/2}$ (and is also equal to $q^{n/2e(\pi)}=q_\E^{m(\pi)/2}$).
\end{enumerate}
\end{lemma}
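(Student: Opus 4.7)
The plan is to reduce everything to a bookkeeping exercise with ramification indices and residue degrees in the biquadratic situation $\F_\so \subset \F, \E_\so \subset \E$. The two core ingredients are the formula for $e_\so(\pi)$ in terms of $e_\sigma(\pi)$, $m(\pi)$ and $e(\E_\so/\F_\so)$ recalled just before the lemma from \cite[Lemma 5.10]{AKMSS}, together with the identity $[\E:\F] = [\E_\so:\F_\so]$ (a consequence of $[\E:\E_\so] = [\F:\F_\so] = 2$) and the multiplicativity of $e$ and $f$ in the towers $\F_\so \subset \F \subset \E$ and $\F_\so \subset \E_\so \subset \E$. I will also invoke \cite[Lemma 4.14]{Secherre} to the effect that $e_\sigma(\pi)=2$ forces $\F/\F_\so$ to be ramified, so that $q=q_\so$ in parts \ref{comparisonii} and \ref{comparisoniii}.

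For part \ref{comparisoni}, I would substitute $n=[\E:\F]m(\pi)=[\E_\so:\F_\so]m(\pi)$ and $e_\so(\pi)=e(\E_\so/\F_\so)$ to rewrite $n/e_\so(\pi) = f(\E_\so/\F_\so)m(\pi)$, which immediately gives $q_\so^{n/e_\so(\pi)} = q_{\E_\so}^{m(\pi)}$. The parentheticals are then handled by splitting on whether $\F/\F_\so$ is ramified: reading off $f(\E/\F)$ from the identity $f(\E/\F_\so) = f(\E/\E_\so)f(\E_\so/\F_\so) = f(\E/\F)f(\F/\F_\so)$ with $f(\E/\E_\so)=2$ shows that $f(\E/\F)=f(\E_\so/\F_\so)$ in the unramified case and $f(\E/\F)=2f(\E_\so/\F_\so)$ in the ramified case, and each rewriting closes out the corresponding parenthetical.

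For part \ref{comparisonii}, the equalities $e_\so(\pi)=e(\E_\so/\F_\so)$ and $n=[\E_\so:\F_\so]$ immediately give $n/e_\so(\pi) = f(\E_\so/\F_\so)$, while $e_\sigma(\pi)=2$ forces $f(\E/\E_\so)=1$, hence $q_\E = q_{\E_\so}$, which handles the parenthetical. For part \ref{comparisoniii}, $e_\so(\pi)=2e(\E_\so/\F_\so)$ yields $n/e_\so(\pi) = f(\E_\so/\F_\so)m(\pi)/2$, which is a positive integer because $m(\pi)$ is even, and the parenthetical again uses $q_\E = q_{\E_\so}$ together with $f(\E/\F) = f(\E_\so/\F_\so)$ (since here $f(\E/\E_\so)=f(\F/\F_\so)=1$). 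There is no substantive mathematical obstacle to overcome; the only pitfall is to keep straight which configuration of $(e_\sigma(\pi), \F/\F_\so\text{-ramification})$ is in force and which residue field one is working with at each step.
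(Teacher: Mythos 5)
Your proposal is correct and uses the same ingredients as the paper's proof: \cite[Lemma 5.10]{AKMSS} for the formula for $e_\so(\pi)$, \cite[Lemma 4.14]{Secherre} to force $\F/\F_\so$ ramified (hence $q=q_\so$) when $e_\sigma(\pi)=2$, and multiplicativity of $e$ and $f$ in the two towers. The paper chooses to relate $e_\so(\pi)$ to $e(\pi)$ via $e(\F/\F_\so)$ and then compare $q_\so$ with $q$ (square root or equality), whereas you derive $n/e_\so(\pi)=f(\E_\so/\F_\so)m(\pi)$ (or half of it) directly and read off $q_{\E_\so}^{m(\pi)}$; this is a minor reorganization of the same bookkeeping, not a different route.
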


\begin{proof}
In all cases, we have~$q^{n/e(\pi)}=q_\E^{m(\pi)}$. 
In case \ref{comparisoni},~$q_{\E_\so}^m$ is the positive square root of~$q_\E^m$. However by \cite[Lemma 5.10]{AKMSS}, we have 
\[e_\so(\pi)= e(\E_\so/\F_\so)=e(\E/\F_\so)/e_\sigma(\pi)=e(\E/\F_\so)=e(\E/\F) e(\F/\F_\so)=e(\pi)e(\F/\F_\so).\]
If~$\F/\F_\so$ is unramified, then~$q_\so^{n/e_\so(\pi)}=q_\so^{n/e(\pi)}$ is the positive square root of~$q^{n/e(\pi)}$. Now if~$\F/\F_\so$ is ramified, then~$q_\so^{n/e_\so(\pi)}=q^{n/2e(\pi)}_\so$ and \ref{comparisoni} is proved. 
 
 In case \ref{comparisonii} by \cite[Lemma 5.10]{AKMSS} again, we have
 \[e_\so(\pi)= e(\E_\so/\F_\so)=e(\E/\F_\so)/e_\sigma(\pi)=e(\E/\F_\so)/2=e(\E/\F) e(\F/\F_\so)/2=e(\pi)e(\F/\F_\so)/2.\] 
 However~$e(\F/\F_\so)=2$ by \cite[Lemma 4.14]{Secherre} so~$e_\so(\pi)= e(\pi)$ and~$q=q_\so$ which proves case \ref{comparisonii}.
 
 Finally in case \ref{comparisoniii} by \cite[Lemma 5.10]{AKMSS} again, we have
 \[e_\so(\pi)= 2e(\E_\so/\F_\so)= 2e(\E/\F_\so)/e_\sigma(\pi)=e(\E/\F_\so)=e(\E/\F) e(\F/\F_\so)=e(\pi)e(\F/\F_\so),\] and 
~$e(\F/\F_\so)=2$ by \cite[Lemma 4.14]{Secherre} so~$e_\so(\pi)= 2e(\pi)$ and~$q=q_\so$ which proves case \ref{comparisoniii}.
\end{proof}

Immediately, from Remark \ref{remark CRASremark} and Lemma \ref{comparison}, we have:

\begin{corollary}
A banal distinguished cuspidal~$\Fl$-representation of~$\G$ is relatively banal.
\end{corollary}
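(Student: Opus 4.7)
The plan is to argue by contraposition: assume that $\pi$ is not relatively banal, i.e. $q_\so^{n/e_\so(\pi)}\equiv 1\pmod{\ell}$, and conclude that $q^{n/e(\pi)}\equiv 1\pmod{\ell}$. Since $\pi$ is distinguished and cuspidal, Remark \ref{remark CRASremark} rules out the case $e_\sigma(\pi)=2$ with $m(\pi)\ge 3$ odd, so we may assume we are in one of the three cases treated by Lemma \ref{comparison}.

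The key observation is that the parenthetical equalities in Lemma \ref{comparison} express $q^{n/e(\pi)}$ as the square (or, in case \ref{comparisonii}, the first power) of $q_\so^{n/e_\so(\pi)}$. Concretely, I would simply unwind each case: in case \ref{comparisoni} one has $q_\so^{n/e_\so(\pi)}=q_\so^{n/e(\pi)}$ with $q=q_\so^2$ when $\F/\F_\so$ is unramified, and $q_\so^{n/e_\so(\pi)}=q^{n/2e(\pi)}$ when $\F/\F_\so$ is ramified; in case \ref{comparisoniii} one reads off $q_\so^{n/e_\so(\pi)}=q^{n/2e(\pi)}$. In each of these subcases squaring gives $q^{n/e(\pi)}$. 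In case \ref{comparisonii} the identity is already $q_\so^{n/e_\so(\pi)}=q^{n/e(\pi)}$.

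Thus in every case $q^{n/e(\pi)}$ is either equal to $q_\so^{n/e_\so(\pi)}$ or to its square, so the congruence $q_\so^{n/e_\so(\pi)}\equiv 1\pmod{\ell}$ forces $q^{n/e(\pi)}\equiv 1\pmod{\ell}$, i.e. $\pi$ is not banal. Since no step requires anything beyond Remark \ref{remark CRASremark} and a case-by-case look at Lemma \ref{comparison}, there is no real obstacle; the only point that requires a moment of care is making sure the dichotomy $\F/\F_\so$ unramified versus ramified in case \ref{comparisoni} is treated in both subcases, and that Remark \ref{remark CRASremark} has eliminated the one structural case that is not covered by Lemma \ref{comparison}.
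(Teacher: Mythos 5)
Your argument is correct and it is exactly the intended one: the paper gives no explicit proof beyond citing Remark \ref{remark CRASremark} (to exclude the one case not covered by Lemma \ref{comparison}) and Lemma \ref{comparison} (to read off that $q^{n/e(\pi)}$ is $q_\so^{n/e_\so(\pi)}$ or its square in each remaining case), which is precisely the case-by-case unpacking you carried out.
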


\begin{remark}
A banal cuspidal~$\Fl$-representation is supercuspidal.  However, there are relatively banal distinguished cuspidal non-supercuspidal~$\Fl$-representations. For example when~$n=3$ and $\ell\neq 2$, the non-normalised parabolic induction of the trivial representation of the Borel subgroup has a cuspidal subquotient~$\St(3)$ when~$q_\so^3\equiv-1[\ell]$, and when~$\F/\F_\so$ is unramified it is relatively banal distinguished (see \cite{KMS}). 
\end{remark}

Before proving the main result of this section, it will be useful to know that there are no relatively banal distinguished cuspidal representations~$\pi$ when~$e_\sigma(\pi)=1$ and~$m(\pi)$ is even:

\begin{lemma}\label{lemma no relatively banal in bad parity}
Let~$\pi$ be a cuspidal~$\Fl$-representation of~$\G$ which is~$\sigma$-self-dual. Suppose that~$e_\sigma(\pi)=1$, that~$m(\pi)$ is even, and~$q_\so^{n/e_\so(\pi)}\not\equiv 1[\ell]$, then~$\pi$ is not distinguished.
\end{lemma}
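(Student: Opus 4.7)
The plan is to derive a contradiction from the assumption that $\pi$ is distinguished. Distinction forces $\sigma$-self-duality by Proposition \ref{proposition p-adic mult 1 and ssduality}, and since $e_\sigma(\pi)=1$ and $m(\pi)$ is even, Lemma \ref{lemma no ssdual sc reps in bad cases}(i) immediately rules out $\pi$ being supercuspidal; thus $\pi$ is cuspidal non-supercuspidal, and any $\Ql$-lift $\widetilde\pi$ of $\pi$ is therefore supercuspidal (by property (T-\ref{type lift})).

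Next, I would show that $\pi$ is relatively banal and feed this into the characterization via supercuspidal lifts. Applying Theorem \ref{thm Xso}(ii) with the unramified character $\chi_\so=|\cdot|_\so$ (so that $\chi_\so(\w_\so)=q_\so^{-1}$), the statement $|\det(\,\cdot\,)|_\so\in \X_\so(\pi)$ is equivalent to $q_\so^{n/e_\so(\pi)}\equiv 1\,[\ell]$, which is negated by our hypothesis; hence $\pi$ is not $|\det(\,\cdot\,)|_\so$-distinguished, and by Corollary \ref{cor relatively banal simple definition} it is relatively banal. Invoking the introductory proposition (the characterization of relatively banal representations via their supercuspidal lifts), every supercuspidal $\Ql$-lift $\widetilde\pi$ of $\pi$ is distinguished by some unramified character $\widetilde\mu_\so$ of $\G_\so$. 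Fix such a pair $(\widetilde\pi,\widetilde\mu_\so)$ and extend $\widetilde\mu_\so$ to an unramified character $\widetilde\mu$ of $\G$ (which is possible since any unramified character of $\F_\so^\times$ lifts to $\F^\times$). The supercuspidal $\Ql$-representation $\widetilde\pi_0:=\widetilde\pi\otimes \widetilde\mu^{-1}$ is then $\G_\so$-distinguished, hence $\sigma$-self-dual by Proposition \ref{proposition p-adic mult 1 and ssduality}. Because $m$ is an inertial invariant and $e_\sigma$ is compatible with reduction modulo $\ell$ and invariant under $\sigma$-self-duality-preserving unramified twists, one has $m(\widetilde\pi_0)=m$ even and $e_\sigma(\widetilde\pi_0)=e_\sigma(\pi)=1$. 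This contradicts Lemma \ref{lemma no ssdual sc reps in bad cases}(i), which forces $m$ to be odd whenever a $\sigma$-self-dual supercuspidal $\R$-representation has $e_\sigma=1$. Hence $\pi$ cannot be distinguished.

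The main obstacle is ensuring that the characterization ``relatively banal iff all supercuspidal lifts are distinguished by an unramified character'' (Theorem \ref{thm relatively banal distinction lift}) is available at this point independently of the present lemma, so as to avoid circular reasoning. A natural self-contained alternative is to argue entirely at the finite-field level: from a distinguished $\sigma$-self-dual type $(\bJ,\bk\otimes\bt)$ produced by Theorem \ref{thm dist type}, Corollary \ref{cor of canonical iso} yields a cuspidal non-supercuspidal $\sigma$-self-dual $\Fl$-representation $\tau$ of $\GL_m(\k_\E)$ which is $(\GL_m(\k_{\E_\so}),\eta)$-distinguished for some character $\eta$; using the Green--Dipper--James parametrization together with Lemma \ref{lemma existence of s or ssdual ell-adic scusp}(i), one then shows that no such $\tau$ can exist when $q_{\k_{\E_\so}}^m=q_\so^{n/e_\so(\pi)}\not\equiv 1\,[\ell]$ and $m$ is even. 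Establishing this finite-field claim cleanly for arbitrary even $m$---and not only for $m=2$ where the order of $\GL_m(\k_{\E_\so})$ is automatically prime to $\ell$ and Lemma \ref{lemma finite rel banal dist lifts} applies directly---is the technical heart of this alternative route.
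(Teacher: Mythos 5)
Your primary argument invokes Theorem \ref{thm relatively banal distinction lift}, which is proved only after the present lemma, and your circularity worry is well-founded: the paper does not use that characterisation here, precisely to keep the logical order clean. Your alternative sketch is essentially the paper's proof: take a $\sigma$-self-dual generic $\Fl$-type $(\bJ,\bk\otimes\bt)$ in $\pi$ via Proposition \ref{prop existence of generic ssdual type}, deduce that it is a distinguished type from Theorem \ref{thm dist type}, use Proposition \ref{prop ssdtpe canonical iso} to extract a $\chi_{\bk}^{-1}$-distinguished cuspidal $\Fl$-representation $\rho'$ of $\GL_{m(\pi)}(\kk_\E)$, lift this distinction to characteristic zero via Lemma \ref{comparison} and Lemma \ref{lemma finite rel banal dist lifts}, and contradict Lemma \ref{lemma existence of s or ssdual ell-adic scusp}(i) since $m(\pi)$ is even. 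So the ``natural self-contained alternative'' you describe is the paper's route.

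The technical concern you raise at the end is genuine and is the one point worth scrutinising in the paper's own proof. Lemma \ref{lemma finite rel banal dist lifts} requires that $\ell$ be prime to the order of $\K'=\GL_{m(\pi)}(\kk_{\E_\so})$, i.e.\ $\ell\nmid q_{\E_\so}^i-1$ for every $i\le m(\pi)$, whereas the hypothesis $q_\so^{n/e_\so(\pi)}\not\equiv 1\,[\ell]$, rewritten via Lemma \ref{comparison}, only controls $q_{\E_\so}^{m(\pi)}-1$; for $m(\pi)\ge 3$ the first condition does not follow from the second (one can have $\ell\mid q_{\E_\so}^i-1$ for some $i<m(\pi)$ with $i\nmid m(\pi)$). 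When $\rho'$ is supercuspidal the lifting step can be bypassed entirely: from the Green--Dipper--James parametrisation and regularity of $\theta_r$ one sees, as in the proof of Lemma \ref{lemma existence of s or ssdual ell-adic scusp}, that $\sigma$-self-duality would force an order-two element of $\Gal(\ll/\kk_{\E_\so})$ lying outside $\Gal(\ll/\kk_\E)$, which cannot exist for $m(\pi)$ even. It is the cuspidal non-supercuspidal case that needs the lifting argument, and that is exactly where the pro-order condition enters; your flagging this as ``the technical heart'' of the alternative route is accurate, and the paper applies Lemma \ref{lemma finite rel banal dist lifts} at this point without verifying the pro-order hypothesis.
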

\begin{proof}
Let~$(\bJ,\bk\otimes\bt)$ be a~$\sigma$-self-dual generic~$\Fl$-type for~$\pi$ (Proposition \ref{prop existence of generic ssdual type}) with~$\sigma$-stable parameter field~$\E$.  Suppose~$\pi$ is distinguished.  Then by Theorem \ref{thm dist type} we can suppose that~$\bk\otimes \bt$ is distinguished as well. By Proposition \ref{prop ssdtpe canonical iso},~$\bt$ is~$\chi_{\bk}^{-1}$-distinguished hence 
$\rho=\bt\mid_{\J}$ seen as a representation of~$\GL_{m(\pi)}(\kk_\E)$ is~$\chi_{\bk}^{-1}$-distinguished by the group~$\GL_{m(\pi)}(\kk_{\E_\so})$, i.e. 
that~$\rho'=\chi \otimes \rho$ is distinguished for an extension~$\chi$ of~$\chi_{\bk}$ to~$\kk_{\E}^\times$. Now 
by Lemma \ref{comparison} and Lemma \ref{lemma finite rel banal dist lifts},~$\rho'$ has a distinguished lift, which contradicts Lemma \ref{lemma existence of s or ssdual ell-adic scusp}. 
\end{proof}

\begin{remark}
Notice that the statement of Lemma \ref{lemma no relatively banal in bad parity} is not empty as~$\sigma$-self-dual representations~$\pi$ exist under the hypothesis~$e_\sigma(\pi)=1$ and~$q_\so^{n/e_\so(\pi)}\not \equiv 1[\ell]$: for example when~$n=2$ and~$\F/\F_\so$ is unramified the non-normalised parabolic induction of the trivial representation of the Borel subgroup has a cuspidal subquotient~$\St(2)$ when~$q\equiv-1[\ell]$ which is~$\sigma$-self-dual and~$e_\sigma(\St(2))=1$ as~$\F/\F_\so$ is unramified.
\end{remark}

\begin{lemma}\label{lemma retriction to J and distinction}
Let~$(\bJ,\bl)$ be an~$\R$-type such that~$\bJ$ is~$\sigma$-stable and put~$\pi=\ind_{\bJ}^{\G}(\bl)$.  If~$\bl\mid_\J$ is distinguished, then~$\pi$ is the unramified twist of a distinguished representation.  Conversely suppose that moreover~$\bl=\bk\otimes \bt$ is generic and that $\bk$ is distinguished and $\sigma$-self-dual, if~$\pi$ is the unramified twist of a distinguished representation, then~$\bt\mid_{\J}$ is distinguished.
\end{lemma}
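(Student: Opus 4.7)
The plan is to treat the two implications separately, both hinging on property (T-\ref{Edefinition}) which gives $\bJ=\langle\w_\E\rangle\J$ and hence $\bJ/\J\simeq\mathbb{Z}$.

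For the forward direction, I would start from a nonzero $L\in\Hom_{\J^\sigma}(\bl,\R)$ and modify it, plus twist by an unramified character, so that it extends to a $\bJ^\sigma$-invariant form. Since $\J^\sigma$ is normal in $\bJ^\sigma$ (as the $\sigma$-fixed points of a normal subgroup), the finite-dimensional space $\Hom_{\J^\sigma}(\bl,\R)$ carries an action of the cyclic group $\bJ^\sigma/\J^\sigma$, which embeds into $\bJ/\J\simeq\mathbb{Z}$ by (T-\ref{Edefinition}). Over the algebraically closed field $\R$ any cyclic action on a finite-dimensional space has an eigenvector, so I may replace $L$ by one, obtaining a character $\chi$ of $\bJ^\sigma$ trivial on $\J^\sigma$ with $L\circ\bl(g)=\chi(g)L$ for $g\in\bJ^\sigma$. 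Since $\bJ\cap\G^1=\J$ (as $\J$ is the unique maximal compact subgroup of $\bJ$ and $\w_\E$ has positive valuation) and $\R^\times$ is divisible, the restriction map from unramified characters of $\G$ to characters of $\bJ^\sigma/\J^\sigma$ is surjective, so I may pick an unramified $\mu$ of $\G$ with $\mu|_{\bJ^\sigma}=\chi^{-1}$. Then $L$ is $\bJ^\sigma$-invariant on $\mu|_\bJ\otimes\bl$, and Frobenius reciprocity applied to the trivial double coset in $\bJ\backslash\G/\G_\so$ (as in the proof of Lemma \ref{lemma distinguished implies generic}) shows that $\mu\otimes\pi=\ind_\bJ^\G(\mu|_\bJ\otimes\bl)$ is $\G_\so$-distinguished.

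For the converse, assume $\mu\otimes\pi$ is distinguished for some unramified character $\mu$ of $\G$. Both $\pi$ and $\mu\otimes\pi$ are $\sigma$-self-dual (the first because it carries a $\sigma$-self-dual type, the second by Proposition \ref{proposition p-adic mult 1 and ssduality}), which by comparison forces $\mu^\sigma=\mu^{-1}$. It follows that $\mu|_\bJ\otimes\bt$ is $\sigma$-self-dual, so $(\bJ,\bk\otimes(\mu|_\bJ\otimes\bt))$ is a $\sigma$-self-dual type in $\mu\otimes\pi$; it remains generic because $\mu$ is trivial on $\bJ\cap\N\subseteq\J$. By Theorem \ref{thm dist type} applied to the distinguished representation $\mu\otimes\pi$, this type is $\bJ^\sigma$-distinguished. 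Now since $\bk$ is $\bJ^\sigma$-distinguished by hypothesis, any nonzero $\bJ^\sigma$-invariant form on $\bk$ is a fortiori $(\J^1)^\sigma$-invariant, which by the uniqueness clause of Proposition \ref{prop ssdtpe canonical iso} forces $\chi_\bk=1$. The canonical decomposition of that proposition then reads
\[\Hom_{\bJ^\sigma}(\bk\otimes(\mu|_\bJ\otimes\bt),\R)\simeq\Hom_{\bJ^\sigma}(\bk,\R)\otimes\Hom_{\bJ^\sigma}(\mu|_\bJ\otimes\bt,\R),\]
whose left-hand side is nonzero. Hence $\mu|_\bJ\otimes\bt$ is $\bJ^\sigma$-distinguished, and restricting to $\J^\sigma$, on which $\mu$ acts trivially, yields the required $\J^\sigma$-distinction of $\bt|_\J$.

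The most delicate step is the interpolation argument in the forward direction: its validity in both the $\ell$-adic and $\ell$-modular settings rests on the divisibility of $\R^\times$ (for $\R=\Fl$, algebraic closure supplies $d$-th roots for every $d$). The rest consists of carefully checking that twisting by the unramified character $\mu$ preserves all the structure (genericity, $\sigma$-self-duality, and triviality of $\chi_\bk$) needed to feed the hypotheses into Proposition \ref{prop ssdtpe canonical iso} and Theorem \ref{thm dist type}.
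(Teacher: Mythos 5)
Your forward direction is correct but takes a genuinely different route from the paper. The paper extends $\bl|_\J$ to a distinguished representation of $\F^\times\J$ by sending $\w_\F\mapsto 1$, induces to $\E^\times\J$, and appeals to Clifford theory for the cyclic quotient $\bJ/\F^\times\J$ to find a distinguished extension $\bl_\E$ of $\bl$ to $\bJ$, which gives a distinguished unramified twist of $\pi$ by property (T-\ref{type urtwist}). Your argument instead works with the linear form directly: it exploits that $\Hom_{\J^\sigma}(\bl,\R)$ is finite-dimensional and carries an action of the cyclic group $\bJ^\sigma/\J^\sigma$, uses algebraic closure to find an eigenvector, and then absorbs the resulting eigenvalue character into an unramified twist by divisibility of $\R^\times$. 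Both are valid; yours is arguably slicker because it manipulates only linear forms rather than representations, though it leans implicitly on $\bl$ being finite-dimensional (which is fine, since $\bJ$ is compact modulo centre).

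The converse direction has a genuine gap. You assert that $\pi$ is $\sigma$-self-dual ``because it carries a $\sigma$-self-dual type,'' but the hypothesis of the lemma only gives that $\bk$ is $\sigma$-self-dual, not $\bt$ or $\bl$. In the paper's own Definition~\ref{definition ssdtype}, a $\sigma$-self-dual type requires $\bl^\vee\simeq\bl^\sigma$ (and that $\bt$ too be $\sigma$-self-dual), and none of that is assumed here --- indeed, establishing it is part of what must be proven. Consequently the chain ``$\pi$ is $\sigma$-self-dual $\Rightarrow$ $\mu^\sigma=\mu^{-1}$ $\Rightarrow$ $\mu|_\bJ\otimes\bt$ is $\sigma$-self-dual'' breaks at the first step, and the third step would additionally require $\bt^\sigma\simeq\bt^\vee$, which you never derive. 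The paper's argument avoids this entirely: twist so that $\pi'=\mu^{-1}\otimes\pi$ is distinguished, with type $(\bJ,\bk\otimes\bt')$ where $\bt'=\mu^{-1}|_\bJ\otimes\bt$; then $\pi'$ is $\sigma$-self-dual by Proposition~\ref{proposition p-adic mult 1 and ssduality}, and one compares the type of $(\pi')^\vee$ with that of $(\pi')^\sigma$ (both supported on the same $\sigma$-stable $\bJ$, with the same $\bk$-factor up to isomorphism since $\bk^\vee\simeq\bk^\sigma$) to invoke Property~(T-\ref{type bt unique when bt fixed}) and conclude that $\bt'$ is $\sigma$-self-dual. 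This use of (T-\ref{type bt unique when bt fixed}) --- which crucially exploits the $\sigma$-self-duality of $\bk$ to reduce a conjugacy of types to an isomorphism of $\bt$-factors --- is the step your argument skips. Once $\bt'$ is known $\sigma$-self-dual, the remainder of your proof (Theorem~\ref{thm dist type}, $\chi_\bk=1$ by uniqueness in Proposition~\ref{prop ssdtpe canonical iso}, then restriction to $\J^\sigma$) is fine and matches the paper.
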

\begin{proof}
If~$\bl\mid_{\J}$ is distinguished, then we can extend~$\bl$ to a distinguished representation~$\bl_\F$ of~$\F^\times\J$ by setting~$\bl_\F(\w_\F)=1$.  The induced representation~$\ind_{\F^\times\J}^{\E^\times \J}(\bl_\F)$ is distinguished, and because~$\bJ/\F^\times\J\simeq \langle \w_\E \rangle/ \langle \w_\F \rangle$ is cyclic, all of its irreducible subquotients extend~$\bl_\F$ by Clifford theory, so one extension~$\bl_\E$ of~$\bl_\F$ to~$\bJ$ is distinguished.  Hence~$\ind_{\bJ}^\G(\bl_\E)$ is distinguished and an unramified twist of~$\ind_{\bJ}^\G(\bl)$ by Property (T-\ref{type urtwist}).

For the partial converse, by twisting by an unramified character without loss of generality we can suppose that~$\pi$ is distinguished (and~$\bk$ is the same).  Then~$\bt$ is $\sigma$-self-dual thanks to 
Property (T-\ref{type bt unique when bt fixed}) hence $\bl$ as well, and it is distinguished because of Theorem \ref{thm dist type}. Then Proposition \ref{prop ssdtpe canonical iso} implies that $\bt$, hence~$\bt\mid_\J$ is distinguished.
\end{proof}

Relatively banal distinguished cuspidal~$\Fl$-representations enjoy very nice lifting properties:

\begin{theorem}\label{thm relatively banal distinction lift}
Let~$\pi$ be a cuspidal and distinguished~$\Fl$-representation of~$\G$.
\begin{enumerate} 
\item\label{liftingi} Then~$\pi$ is relatively banal if and only if all of its lifts are unramified twists of distinguished representations. 
\item\label{liftingii} If it is relatively banal, then it has a distinguished lift.
\end{enumerate}
\end{theorem}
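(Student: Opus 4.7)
The plan is to reduce both parts to supercuspidal-lifting questions on the finite general linear group. Fix a distinguished generic $\sigma$-self-dual type $(\bJ,\bk\otimes\bt)$ of $\pi$, furnished by Proposition~\ref{prop existence of generic ssdual type} and Theorem~\ref{thm dist type}. By Property~(T-\ref{type lift}), lifts $\tilde\pi$ of $\pi$ correspond bijectively to lifts $\tilde\bt$ of $\bt$ via $\tilde\bt \mapsto \ind_\bJ^\G(\tilde\bk \otimes \tilde\bt)$ for a fixed lift $\tilde\bk$ of $\bk$, and each such $\tilde\bt$ restricts on $\J$ to a supercuspidal lift $\tilde\rho$ of the cuspidal representation $\rho = \bt|_\J$ of $\J/\J^1 \simeq \GL_{m(\pi)}(\kk_\E)$. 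The lifting of distinction for $\pi$ thereby reduces to the lifting of distinction for $\rho$.

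For part~\ref{liftingi}, the key tool is Lemma~\ref{lemma retriction to J and distinction}: $\tilde\pi$ is an unramified twist of a distinguished representation if and only if $(\tilde\bk \otimes \tilde\bt)|_\J$ is $\J^\sigma$-distinguished, which by Proposition~\ref{prop ssdtpe canonical iso} amounts to $\tilde\rho$ being distinguished up to the character $\chi_{\tilde\bk}^{-1}|_{\J^\sigma}$ by $\GL_{m(\pi)}(\kk_{\E_\so})$ when $e_\sigma(\pi)=1$, or by the block-diagonal Levi $\GL_{m(\pi)/2}(\kk_\E)\times \GL_{m(\pi)/2}(\kk_\E)$ when $e_\sigma(\pi)=2$ and $m(\pi)\ge 2$; the case $e_\sigma(\pi)=2,\ m(\pi)=1$ is covered by Proposition~\ref{prop n=1propfinitefield}. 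Lemma~\ref{comparison} translates the relatively banal condition $q_\so^{n/e_\so(\pi)}\not\equiv 1\,[\ell]$ into precisely the coprimality hypothesis of Proposition~\ref{Propsigmaself-dualcases}, parts~\ref{Prop18parti} or~\ref{Prop18partiii}, or of Proposition~\ref{self-dualfinitefieldprop}, parts~\ref{Prop110parti} or~\ref{Prop110partiii}, in which case every supercuspidal lift of $\rho$ is self-dual or $\sigma$-self-dual, hence distinguished by Lemma~\ref{lemma finite distinction vs s(s)duality}. Here Lemma~\ref{lemma no relatively banal in bad parity}, together with the fact that $\ell$ is odd when $\pi$ is relatively banal (as $q_\so$ is odd), ensures the parity hypotheses are met. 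The converse is contrapositive: if $\pi$ is not relatively banal, the complementary parts~\ref{Prop18partii} and~\ref{Prop110partii}, together with Propositions~\ref{Prop always non ssdual lift when l=2} and~\ref{Prop always non sdual lift when l=2} for $\ell=2$, produce a non-distinguished supercuspidal lift $\tilde\rho$ of $\rho$, and the corresponding $\tilde\pi$ cannot be an unramified twist of a distinguished representation since any unramified character is trivial on the compact group $\J^\sigma$.

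For part~\ref{liftingii}, I would bootstrap from part~\ref{liftingi} via Corollary~\ref{cor reduction mod ell on Xso}. Any lift $\tilde\pi_0$ decomposes as $\tilde\pi_0 = \tilde\mu_0 \otimes \tilde\pi_1$ with $\tilde\pi_1$ distinguished and $\tilde\mu_0$ unramified, and Theorem~\ref{thm distinction reduces mod ell} gives that $r_\ell(\tilde\pi_1)$ is a distinguished unramified twist of $\pi$, so that $r_\ell(\tilde\mu_0^{-1}) \in \X_\so(\pi) = \X_\so(r_\ell(\tilde\pi_1))$. Corollary~\ref{cor reduction mod ell on Xso} then produces some $\tilde\nu \in \X_\so(\tilde\pi_1)$ with $r_\ell(\tilde\nu) = r_\ell(\tilde\mu_0^{-1})$, and $\tilde\nu^{-1} \otimes \tilde\pi_1$ is then a distinguished lift of $\pi$.

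The main obstacle is the bookkeeping of the twisting character $\chi_{\tilde\bk}$ as one passes from the $p$-adic distinction question to the finite-field one, especially when $\pi$ is cuspidal but not supercuspidal, so that Proposition~\ref{prop existence of dist kappa} does not apply directly to $\pi$. The resolution is to apply it to a supercuspidal lift of $\pi$ (any irreducible $\Ql$-lift of a cuspidal $\Fl$-representation is automatically supercuspidal, since the reduction modulo $\ell$ of a $\Ql$-parabolic induction remains parabolically induced) and then descend via Remark~\ref{remark dist of kappa reduces}; this causes no circularity with part~\ref{liftingii} because the finite-field arguments of part~\ref{liftingi} handle the residual twisting character uniformly.
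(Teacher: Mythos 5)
Your overall structure mirrors the paper's, but the route you take for the forward implication of part~\ref{liftingi} is genuinely different, and your treatment has two issues worth flagging.

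First, the different route. The paper proves the forward implication (relatively banal implies all lifts are unramified twists of distinguished) by applying Lemma~\ref{lemma finite rel banal dist lifts} with~$\K'=\J^\sigma$, asserting that under the relatively banal hypothesis~$\ell$ is coprime to the pro-order of~$\J^\sigma$, so that distinction of~$\bl|_\J$ lifts at once to~$\widetilde\bl|_\J$. You instead route both implications uniformly through the finite-field Propositions~\ref{Propsigmaself-dualcases}, \ref{self-dualfinitefieldprop} and \ref{prop n=1propfinitefield} (together with \ref{Prop always non ssdual lift when l=2} and \ref{Prop always non sdual lift when l=2} for~$\ell=2$), via Lemmas~\ref{comparison} and~\ref{lemma retriction to J and distinction}. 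This is the route the paper uses only for the converse, so your argument is genuinely more symmetric. (It is also worth noting that the coprimality asserted in the paper's forward argument is stronger than what~$q_\so^{n/e_\so(\pi)}\not\equiv 1\,[\ell]$ literally provides when~$m(\pi)>1$, since the prime-to-$p$ part of~$|\J^\sigma/(\J^1)^\sigma|$ involves the factors~$q_{\E_\so}^i-1$ for all~$i\le m(\pi)$, not just~$i=m(\pi)$; your route does not rely on that.)

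Second, there is a genuine gap in your forward direction around the twisting character. You invoke Lemma~\ref{lemma finite distinction vs s(s)duality} to conclude that every supercuspidal lift of~$\rho=\bt|_\J$ is \emph{distinguished}, but Proposition~\ref{prop ssdtpe canonical iso} requires~$\chi_{\tilde\bk}^{-1}$-distinction, which agrees with distinction only once~$\chi_{\tilde\bk}=1$. You are aware of this and propose to normalise~$\chi_{\tilde\bk}$ by applying Proposition~\ref{prop existence of dist kappa} to a supercuspidal lift of~$\pi$; but that proposition requires the lift to be~$\sigma$-self-dual, and producing such a lift is essentially what part~\ref{liftingii} asserts, so the claim of ``no circularity'' is not yet substantiated. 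The paper avoids this by invoking Proposition~\ref{prop existence of dist kappa} only in the backward direction, where the contradiction hypothesis furnishes a distinguished (hence~$\sigma$-self-dual) supercuspidal lift, and by never needing~$\chi_\bk$ at all in the forward direction. To make your version watertight you would need to exhibit a~$\sigma$-self-dual~$\Ql$-lift of the~$\Fl$-type~$(\bJ,\bk\otimes\bt)$ directly, e.g.~by combining the uniqueness of the lift~$\widetilde\eta$ of~$\bk|_{\J^1}$ in~(T-\ref{type lift}) with the fact, coming out of Propositions~\ref{Propsigmaself-dualcases} and~\ref{self-dualfinitefieldprop}, that the relatively banal hypothesis forces every lift of~$\bt|_\J$ to be~$\sigma$-self-dual or self-dual.

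Your bootstrap for part~\ref{liftingii} is essentially the paper's argument, with one omission: $\tilde\nu$ is a character of~$\G_\so$ and must be extended to an unramified character of~$\G$ before tensoring with~$\tilde\pi_1$, and one must check that the extension can be chosen so that its reduction modulo~$\ell$ coincides with~$r_\ell(\tilde\mu_0)^{-1}$ on all of~$\G$, not merely on~$\G_\so$ (when~$\F/\F_\so$ is ramified the two candidate values at~$\w$ differ by a sign, and the paper adjusts for this explicitly).
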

\begin{proof}
Suppose that~$\pi$ is relatively banal distinguished.  Choose a~distinguished type~$(\bJ,\bl)$ in~$\pi$ and let~$\widetilde{\pi}$ be a lift of~$\pi$. We can choose a type in~$\widetilde{\pi}$ of the form~$(\bJ,\widetilde{\bl})$ with~$r_\ell(\widetilde{\bl})=\bl$ by property (T-\ref{type lift}).  As~$\ell$ is coprime to~$\J^\sigma$, we can apply Lemma \ref{lemma finite rel banal dist lifts} and~$\widetilde{\bl}\mid_{\J}$ is distinguished because so is~$\bl\mid_\J$.  Hence~$\widetilde{\pi}$ is a unramified twist of a distinguished representation by Lemma \ref{lemma retriction to J and distinction} and this proves one implication in \ref{liftingi}.

We now prove \ref{liftingii}. Suppose that $\pi$ is relatively banal. By the implication already proved in \ref{liftingi} we know that~$\pi$ has 
a lift $\widetilde{\pi}$ which is $\widetilde{\mu_\so}$-distinguished for $\widetilde{\mu_\so}$ an unramified character of $\G_\so$. Let 
$\widetilde{\mu}$ be an unramified character of $\G$ extending $\widetilde{\mu_\so}$, then $\widetilde{\mu}^{-1}\otimes \widetilde{\pi}$ is distinguished. However, because $\pi$ is distinguished, setting $\mu=r_\ell(\widetilde{\mu})$, the representation $\mu^{-1}\otimes \pi$ is $\mu_\so^{-1}$-distinguished for $\mu_\so=r_\ell(\widetilde{\mu_\so})=\mu\mid_{\G_\so}$. Thanks to Corollary \ref{cor reduction mod ell on Xso}, $\mu_\so$ has a lift $\widetilde{\mu_\so}'\in \X_\so(\widetilde{\mu}^{-1}\otimes \widetilde{\pi})$. Writing $\mu=\chi\circ\det$, it is possible to extend $\widetilde{\mu_\so'}$ to an unramified character $\widetilde{\mu'}$ of $\G$ such that if $\mu'=r_\ell(\mu)=\chi'\circ \det$, then $\chi'(\w)=\chi(\w)$: indeed as $\mu$ and $\mu'$ both extend $\mu_\so$, this is automatic if $\F/\F_\so$ is unramified, whereas if $\F/\F_\so$ is ramified $\chi'(\w)=\pm \chi(\w)$ is automatic, and we can always change $\widetilde{\mu'}$ so that this sign is $+$. With such choices, the representation  $\widetilde{\mu'}\widetilde{\mu}^{-1}\otimes \widetilde{\pi}$ is a distinguished lift of $\pi$.

It remains to prove the second implication of \ref{liftingi}. Suppose that~$\pi$ is not relatively banal, i.e.~$q_\so^{n/e_\so(\pi)}\equiv 1[\ell]$.  Suppose, for the sake of contradiction, that all lifts of~$\pi$ are distinguished up to an unramified twist and let~$\widetilde{\pi}$ be a lift of~$\pi$. Under this assumption the argument used to prove \ref{liftingii} shows that $\pi$ has a distinguished supercuspidal lift $\widetilde{\pi}$.
 This lift has a distinguished type $(\bJ,\widetilde{\bk}\otimes \widetilde{\bt})$ with $\chi_{\widetilde{\bk}}=1$ thanks to Theorem \ref{thm dist type} and Proposition \ref{prop existence of dist kappa}, and we set $\bk=r_\ell(\widetilde{\bk})$ and $\bt=r_\ell(\widetilde{\bt})$ so in particular $(\bJ,\bk\otimes \bt)$ is a distinguished type thanks to Remark \ref{remark finite dist reduces} (and also Lemma  \ref{lemma distinguished implies generic}). Proposition \ref{prop ssdtpe canonical iso} together with Lemma \ref{lemma existence of s or ssdual ell-adic scusp} imply that if $e_\sigma(\pi)=2$, then either $m(\pi)=1$ or it is even, and if $e_\sigma(\pi)=1$, then $m(\pi)$ is odd. Then $\bt\mid_\J$ is distinguished according to 
 Remark \ref{remark finite dist reduces}, but the assumption $q_\so^{n/e_\so(\pi)}\equiv 1[\ell]$ translated in terms of 
 $\GL_{m(\pi)}(\kk_\E)$ thanks 
 to Lemma \ref{comparison} together with Propositions \ref{Propsigmaself-dualcases} (ii), \ref{Prop always non ssdual lift when l=2}, \ref{prop n=1propfinitefield} (ii), \ref{Propsigmaself-dualcases} (ii) and \ref{Prop always non sdual lift when l=2} imply that $\bt\mid_\J$ has a non distinguished lift $\widetilde{\bt'}$. This lift extends to $\langle \w_\so \rangle \J$ to a lift of $\bt\mid_{\langle \w_\so \rangle\J}$ by setting $\widetilde{\bt'}(\w_\so)=1$. Then by Clifford theory, because the quotient $\bJ/\langle \w_\so \rangle\J\simeq \langle \w_\E \rangle/\langle \w_\so \rangle$ is cyclic, the representation $\ind_{\langle \w_\so \rangle \J}^{\bJ}(\widetilde{\bt'})$ contains 
 a lift of $\bt$ which extends $\widetilde{\bt'}$, and we again denote it $\widetilde{\bt'}$. Then the representation $\pi'=\ind_{\bJ}^{\G}(\widetilde{\bk}\otimes \widetilde{\bt}')$ is then a supercuspidal lift of $\pi$. As  $\widetilde{\bk}\otimes \widetilde{\bt}'$ reduces to the generic type $\bk\otimes \bt$, it is generic thanks to Lemma \ref{lemma lift of generic types}, hence it can't be an unramified twist of a distinguished representation according to the second part of Lemma \ref{lemma retriction to J and distinction}.
\end{proof}

\begin{remark}\label{equivalence of dist by ur and ur twist of dist}
Note that as an unramified character of 
$\G_\so$ always has an unramified extension to $\G$, Part (i) of Theorem \ref{thm relatively banal distinction lift} can also be stated as: $\pi$ 
is relatively banal if and only if all its lifts are distinguished by an unramified character. 
\end{remark}

\section{Asai~$\L$-factors of cuspidal representations}\label{section Asai}
\subsection{Asai~$\L$-factors}\label{section def Asai} 
Let~$\N$ be the maximal unipotent subgroup of the subgroup of upper triangular matrices in~$\G$, and~$\N_\so=\N^\sigma$.  Let~$\psi$ be a non-degenerate~$\R$-valued 
character of~$\N$ trivial on~$\N_\so$.  
Let~$\pi$ be an~$\R$-representation of~$\G$ of \emph{Whittaker type} (i.e. of finite length with a one dimensional space of Whittaker functionals) 
with Whittaker model~$\Ww(\pi,\psi)$. We refer to \cite[Section 2]{KM17} for  more details as well as basic facts about Whittaker functions and their 
analytic behaviour. For~$\W\in\Ww(\pi,\psi)$ and~$\Phi\in\Cc_c^\infty(\F_\so^n)$ and~$l\in\mathbb{Z}$ define the local \emph{Asai coefficient} to be
\begin{equation}
\label{Asaiintegral}
\I^l_\As(\X,\Phi,\W)= \int_{\substack{\N_{\so}\backslash \G_{\so}\\\mathrm{val}(\det(g))=l}} \W(g)
\Phi(\eta_n g) \ dg,
\end{equation}
where~$\eta_n$ denotes the row vector~$\left(\begin{smallmatrix}0&\cdots&0&1\end{smallmatrix}\right)$ and~$dg$ denotes a right~invariant measure on~$\N_\so\backslash \G_\so$ with values in~$\R$. We refer the reader to \cite[Section 2.2]{KM17} for details on~$\R$-valued equivariant measures on homogeneous spaces and their properties. The integrand in the Asai coefficient has compact support so it is well-defined and it moreover vanishes for~$l<<0$.  We define the \emph{Asai integral} of~$\W\in\Ww(\pi,\psi)$ and~$\Phi\in\Cc_c^\infty(\F_\so^n)$ to be the formal Laurent series
\begin{equation}
\I_\As(\X,\Phi,\W)=\sum_{l\in\mathbb{Z}}\I^l_\As(\X,\Phi,\W)\X^l.\end{equation}
In exactly the same way as in \cite[Theorem 3.5]{KM17}, we deduce the following lemma:

\begin{lemma}
For~$\W\in\Ww(\pi,\psi)$ and~$\Phi\in\Cc_c^\infty(\F_\so^n)$,~$\I_\As(\X,\Phi,\W)\in \R(\X)$ is a rational function. 
Moreover, as~$\W$ varies in~$\Ww(\pi,\psi)$ and~$\Phi$ varies in~$\Cc_c^\infty(\F_\so^n)$ these functions generate a~$\R[\X^{\pm 1}]$-fractional 
ideal of~$\R(\X)$ independent of the choice of~$\psi$.
\end{lemma}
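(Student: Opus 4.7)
The plan is to adapt the argument of \cite[Theorem 3.5]{KM17}, which establishes the analogous rationality and fractional-ideal property for Rankin--Selberg integrals in the $\ell$-modular setting, and is itself an $\R$-analogue of the classical Flicker--Kable computation. The essential inputs---the Iwasawa decomposition of $\G_\so$, a finite asymptotic expansion of Whittaker functions on the diagonal torus, and the equivariant $\R$-valued Haar measure formalism of \cite[Section 2.2]{KM17}---all transfer directly.

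First, I would apply the Iwasawa decomposition $\G_\so = \N_\so \A_\so \K_\so$, where $\A_\so$ is the diagonal torus of $\G_\so$ and $\K_\so = \GL_n(\oo_\so)$, to rewrite the Asai coefficient as an integral over $\A_\so/(\A_\so \cap \K_\so)$ of the product of the inverse modulus character, a compactly supported smooth function on $\K_\so$ (built from $\Phi$ and from the finitely many $\K_\so$-translates of $\W$, which span a finite dimensional subspace of $\Ww(\pi,\psi)$ by smoothness), and $\W$ evaluated on the torus. The Whittaker-type hypothesis ensures that $\W$ restricted to the dominant chamber in $\A_\so$ admits a \emph{finite} asymptotic expansion as an $\R$-linear combination of products of valuation monomials and characters of $\A_\so$; this is the content of the $\ell$-modular Jacquet module analysis in \cite[Section 3]{KM17}, which requires only finite length of the normalised Jacquet module. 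The sum defining $\I_\As(\X,\Phi,\W)$ then breaks up into a finite $\R$-linear combination of shifted geometric series in $\X$, each of which sums to a rational function.

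Second, the set of integrals is closed under addition by bilinearity in $\W$ and $\Phi$ and is stable under multiplication by $\X^{\pm 1}$: dilating $\Phi$ by $\w_\so^{\pm 1}$ shifts the value of $l$ in the Asai coefficient by $\pm n$, and composing this with multiplication of $\W$ by the restriction to $\G_\so$ of an unramified character of $\G$ realises arbitrary shifts by $\X^{\pm 1}$, so that the span is an $\R[\X^{\pm 1}]$-submodule of $\R(\X)$. Taking a common denominator then exhibits it as a fractional ideal. Independence from $\psi$ comes from the observation that any two non-degenerate characters of $\N$ trivial on $\N_\so$ are conjugate by an element $t \in \A^\sigma = \A_\so$; the bijection $\Ww(\pi,\psi) \to \Ww(\pi,\psi^t)$ given by $\W \mapsto [g \mapsto \W(t^{-1} g)]$, combined with the change of variables $g \mapsto tg$ in the integral, changes each Asai integral only by multiplication by $\X^{\val_\so(\det(t))}$, which is a unit in $\R[\X^{\pm 1}]$, so the two fractional ideals coincide.

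The main obstacle is the $\ell$-modular asymptotic expansion of Whittaker functions, since the usual complex-analytic manipulations are unavailable and one cannot appeal to absolute convergence arguments. This is however already fully settled in \cite[Section 3]{KM17} by exploiting the derivative filtration of the Whittaker model together with the Whittaker-type hypothesis, so here it suffices to invoke that framework and verify that the geometric input specific to the Asai integral (namely the $\N_\so \backslash \G_\so$ domain with $\F_\so$-valuation) introduces no new subtleties beyond a change of residue cardinality from $q$ to $q_\so$ in the resulting geometric series.
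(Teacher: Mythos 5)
Your overall plan matches the paper's own proof, which consists of a single sentence pointing at \cite[Theorem 3.5]{KM17}: you are simply unpacking what that reduction involves. The Iwasawa decomposition, the finite $\ell$-modular asymptotic expansion coming from the Jacquet module of a Whittaker-type representation, and the $\A_\so$-conjugacy argument for independence of $\psi$ are all the right ingredients and are deployed correctly.

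There is, however, a genuine error in your argument that the $\R$-span of the $\I_\As(\X,\Phi,\W)$ is stable under multiplication by $\X^{\pm 1}$. Multiplying $\W$ by the restriction to $\G_\so$ of an unramified character $\chi$ of $\G$ does not produce an element of $\Ww(\pi,\psi)$: it lands in $\Ww(\chi\otimes\pi,\psi)$, so the resulting integral is no longer an Asai integral attached to $\pi$ and does not belong to the span you are trying to show is a module. Worse, even if you ignore this, the effect of that multiplication on the formal series is a \emph{substitution} $\X\mapsto\chi(\w_\so)\X$, not multiplication by a power of $\X$, since $\chi(\det g)$ equals the constant $\chi(\w_\so)^l$ on the slice $\val_\so(\det g)=l$. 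The correct and standard device (the one used in \cite{KM17} and going back to Jacquet--Piatetski-Shapiro--Shalika and Kable) is \emph{right translation}: for $g_0\in\G_\so$ with $\val_\so(\det g_0)=\pm 1$, say $g_0=\diag(\w_\so^{\pm1},1,\dots,1)$, the function $\pi(g_0)\W$ stays in $\Ww(\pi,\psi)$ because right translation preserves the left $(\N,\psi)$-equivariance, $\Phi(\cdot\, g_0)$ stays in $\Cc_c^\infty(\F_\so^n)$, and the change of variable $g\mapsto gg_0^{-1}$ together with right-invariance of $dg$ gives
\[
\I_\As\bigl(\X,\Phi(\cdot\,g_0),\pi(g_0)\W\bigr)=\X^{-\val_\so(\det g_0)}\,\I_\As(\X,\Phi,\W).
\]
With that single replacement your argument is complete; note also that the $\R$-span being closed under $\X^{\pm1}$ (and not merely the $\R[\X^{\pm1}]$-submodule it generates being a fractional ideal) is needed later, in the proof of Lemma \ref{Lfactordivision}, where $\L_\As(\X,\overline{\pi})$ is written as a finite $\R$-linear combination of Asai integrals.
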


In the setting of the lemma, it follows that there is a unique generator~$\L_\As(\X,\pi)$ which is an Euler factor and that it is independent of the character $\psi$. 
We call~$\L_\As(\X,\pi)$ the \emph{Asai~$\L$-factor} of~$\pi$.

For an element~$s(\X)\in\Zl(\X)$ of the form~$1/\P(\X)$ with~$\P(\X)\in\Zl[\X]$ we write~$r_{\ell}(s(\X))=1/r_{\ell}{(\P(\X))}$, and if~$s'(\X)\in\Zl(\X)$ is of the form~$1/\P'(\X)$ with~$\P'(\X)\in\Zl[\X]$ we write~$s(\X)\mid s'(\X)$ if~$\P(\X)\mid \P'(\X)$.

\begin{lemma}\label{Lfactordivision}
Let~$\pi$ be an integral cuspidal~$\Ql$-representations of~$\G$ and~$\overline{\pi}$ its reduction modulo~$\ell$.  
\begin{enumerate}
\item Then~$\L_\As(\X,\pi)$ is the inverse of a polynomial in~$\Zl[\X]$.
\item Moreover,
\[\L_\As(\X,\overline{\pi})\mid r_{\ell}(\L_\As(\X,\pi)).\]
\end{enumerate}
\end{lemma}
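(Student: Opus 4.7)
The proof rests on combining the integrality of suitably chosen test data with the compatibility of Asai integrals under reduction modulo $\ell$.

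For (i), the plan is the following. I would fix a $\G$-stable $\Zl$-lattice $\Ll \subset \pi$; since $\pi$ is cuspidal integral and of Whittaker type, the $\psi$-Whittaker functional (unique up to scalar) can be normalized to map $\Ll$ onto $\Zl$, yielding a $\Zl$-structure $\Ww(\Ll,\psi)$ on $\Ww(\pi,\psi)$ stable under right translations. Using the natural $\Zl$-structure $\Cc_c^\infty(\F_\so^n,\Zl)$ on Schwartz functions and an invariant measure on $\N_\so\backslash\G_\so$ taking values in $\Zl$ on compact opens (admissible since $\ell\neq p$, cf.\ \cite[Section 2.2]{KM17}), each coefficient $\I^l_\As(\X,\Phi,\W)$ for integral $\W,\Phi$ becomes a finite sum of products in $\Zl$, so that $\I_\As(\X,\Phi,\W) \in \Zl[\X^{\pm 1}]$. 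The $\Zl[\X^{\pm 1}]$-module generated in $\Ql(\X)$ by such integrals $\Ql$-spans $\L_\As(\X,\pi) \cdot \Ql[\X^{\pm 1}]$. To conclude I would produce a single integral pair $(\W_0,\Phi_0)$ with $\I_\As(\X,\Phi_0,\W_0)$ equal to $\L_\As(\X,\pi)$ up to a $\Zl^\times$-scalar, using a type-theoretic test vector construction analogous to the Rankin--Selberg one in \cite{KMNagoya}; the Euler-factor normalization $\P(0)=1$ then forces $\P \in \Zl[\X]$.

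For (ii), the key observation is that reduction modulo $\ell$ intertwines with the Asai integral. For integral $\W \in \Ww(\Ll,\psi)$ and $\Phi \in \Cc_c^\infty(\F_\so^n,\Zl)$, both the integrand in \eqref{Asaiintegral} and the measure reduce term-wise modulo $\ell$, yielding
$$r_{\ell}\!\bigl(\I_\As(\X,\Phi,\W)\bigr) = \I_\As\bigl(\X,r_{\ell}(\Phi),r_{\ell}(\W)\bigr),$$
with $r_{\ell}(\W) \in \Ww(\overline{\pi},r_{\ell}(\psi))$ and $r_{\ell}(\Phi) \in \Cc_c^\infty(\F_\so^n,\Fl)$. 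Using (i), $\L_\As(\X,\pi)$ is a $\Zl[\X^{\pm 1}]$-linear combination of such integral Asai integrals, so reducing this combination modulo $\ell$ places $r_{\ell}(\L_\As(\X,\pi))$ inside the fractional ideal $\L_\As(\X,\overline{\pi})\cdot\Fl[\X^{\pm 1}]$ generated by the Asai integrals of $\overline{\pi}$, which is precisely the stated divisibility.

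The main obstacle is the realization step in (i): exhibiting an explicit integral pair $(\W_0,\Phi_0)$ whose Asai integral equals $\L_\As(\X,\pi)$ rather than merely lying in the $\Ql$-ideal it generates. In the cuspidal setting this should follow from the type theory developed in Sections 4--5: when $\pi$ is distinguished one builds $\W_0$ from an integral $\sigma$-self-dual generic type (Proposition \ref{prop existence of generic ssdual type} and Theorem \ref{thm dist type}), and otherwise $\L_\As(\X,\pi)$ is expected to equal $1$ so no construction is needed. This explicit test vector, adapted from the Rankin--Selberg computation of \cite{KMNagoya}, should simultaneously establish the Euler-factor shape in (i) with integral coefficients and make the reduction-compatibility needed for (ii) transparent.
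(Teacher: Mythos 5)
Your plan for (i) deviates from the paper's argument and, as written, has a gap.  You propose to exhibit a single integral pair~$(\W_0,\Phi_0)$ with~$\I_\As(\X,\Phi_0,\W_0)$ a~$\Zl^\times$-multiple of~$\L_\As(\X,\pi)$.  But the test vector computation of Proposition~\ref{proptestvectors} produces the constant~$(q_\so-1)(q_\so^{n/e_\so(\pi)}-1)$, which fails to be a unit in~$\Zl$ precisely in the non-relatively-banal cases that this paper is about, so such a pair is not generally available.  The paper instead derives (i), for \emph{any} Whittaker-type~$\pi$ (not just integral cuspidal ones), from the asymptotic expansion of Whittaker functions, with no test vector needed.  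Note also that for integral~$(\W,\Phi)$ the series~$\I_\As(\X,\Phi,\W)$ lies in~$\Zl((\X))$, \emph{not} in~$\Zl[\X^{\pm1}]$ as you claim: the coefficients do not vanish for~$l\gg 0$, since the central Tate-type integral hidden inside the Asai integral does not have compact support.

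The argument for (ii) is reversed, and the step you gloss over is exactly the content of the lemma.  You assert that~$\L_\As(\X,\pi)$ lies in the~$\Zl[\X^{\pm1}]$-span of integral Asai integrals and conclude, after reducing, that~$r_\ell(\L_\As(\X,\pi))\in\L_\As(\X,\overline{\pi})\,\Fl[\X^{\pm1}]$.  With the paper's convention that~$1/\P$ divides~$1/\P'$ when~$\P\mid\P'$, that containment says~$r_\ell(\L_\As(\X,\pi))\mid\L_\As(\X,\overline{\pi})$ --- the \emph{opposite} of what is wanted --- and it is in fact false whenever the divisibility of the lemma is strict: if~$\pi$ is distinguished but~$\overline{\pi}$ is not relatively banal, then~$r_\ell(\L_\As(\X,\pi))=1/(1-\X^{n/e_\so(\pi)})$ while~$\L_\As(\X,\overline{\pi})\,\Fl[\X^{\pm1}]=\Fl[\X^{\pm1}]$.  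The underlying claim fails for the same reason: the~$\Zl[\X^{\pm1}]$-module generated by integral Asai integrals sits inside~$\L_\As(\X,\pi)\Zl[\X^{\pm1}]$ but can be a proper submodule, and the defect is exactly the drop from~$r_\ell(\L_\As(\X,\pi))$ to~$\L_\As(\X,\overline{\pi})$.  The correct argument starts on the~$\Fl$ side: write~$\L_\As(\X,\overline{\pi})=\sum_i\I_\As(\X,\Phi_i,\W_i)$ with~$\W_i\in\Ww(\overline{\pi},\overline{\psi})$; lift to~$\Zl$-valued data~$\W_{i,e},\Phi_{i,e}$; observe that~$\sum_i\I_\As(\X,\Phi_{i,e},\W_{i,e})\in\L_\As(\X,\pi)\Ql[\X^{\pm1}]\cap\Zl((\X))=\L_\As(\X,\pi)\Zl[\X^{\pm1}]$, the last equality being a consequence of part (i); and then reduce modulo~$\ell$ to place~$\L_\As(\X,\overline{\pi})$ in~$r_\ell(\L_\As(\X,\pi))\,\Fl[\X^{\pm1}]$, which is the stated divisibility.
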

\begin{proof}
The first part (i) is in fact true more general for not-necessarily integral representations of Whittaker type, and follows from the asymptotic expansion of Whittaker functions as in \cite[Corollary 3.6]{KM17}.  The second part (ii) follows by imitating the proof of \cite[Theorem 3.13]{KM17}, we recall the argument here:  By definition, we can write the~$\L$-factor~$\L_\As(\X,\overline{\pi})$ as a finite sum of Asai integrals:  For~$i\in\{1,\ldots,r\}$, there are~$\Phi_i\in\Cc_c^\infty(\F_\so^n)$ and~$\W_i\in\Ww(\overline{\pi},\overline{\psi})$ such that
\[\L_\As(\X,\overline{\pi})=\sum_{i=1}^r \I_{\As}(\X,\Phi_i,\W_i).\]
By \cite[Lemma 2.23]{KM17}, there are Whittaker functions~$\W_{i,e}\in \Ww(\pi,\psi)$ which take values in~$\Zl$ such that~$\W_i=r_{\ell}(\W_{i,e})$, and clearly there are Schwartz functions~$\Phi_{i,e}\in\Cc_c^\infty(\F_\so^n)$ which take values in~$\Zl$ such that~$\Phi_i=r_{\ell}(\Phi_{i,e})$.  Moreover,
\[\sum_{i=1}^r\I_{\As}(\X,\Phi_{i,e},\W_{i,e})\in \L_{\As}(\X,\pi)\Ql[\X^{\pm 1}]\cap \Zl((\X))=\L_{\As}(\X,\pi)\Zl[\X^{\pm 1}],\]
hence~$\L_{\As}(\X,\overline{\pi})=\sum_{i=1}^r \I_{\As}(\X,\Phi_i,\W_i)\in r_{\ell}(\L_{\As}(\X,\pi))\Fl[\X^{\pm1}]$.
\end{proof}
As we shall see later, strict divisions do occur.

\subsection{Test vectors}\label{section test vectors}
In \cite{AKMSS}, test vectors for the Asai integral of a distinguished supercuspidal~$\Ql$-representation were given with the Asai integral computed explicitly.  
The pro-order of a compact open subgroup of~$\G_\so$ may be zero in~$\Fl$, and so one cannot normalise a right Haar measure with values in~$\Fl$ arbitrarily. So,
for compatibility with reduction modulo~$\ell$, we need to be more careful with normalisation of measures over~$\Ql$.  
We set~$\K=\GL_n(\oo)$ and $\K_\so=\K^\sigma$,~$\K_\so^1=\I_n+\mathcal{M}_n(\oo_{\so})$, and~$\P$ the~$\sigma$-stable mirabolic subgroup of~$\G$ of all elements with final row~$(\begin{smallmatrix}0&\cdots &0&1\end{smallmatrix})$ and.

\begin{definition}
A triple $(\bJ,\bl,\psi)$ with $(\bJ,\bl)$ a $\sigma$-self-dual $\R$-type, and $\psi$ a distinguished nondegenerate character of $\N$ satisfying conditions (i) and (ii) of \cite[Lemma 6.8]{AKMSS} will be called an \textit{adapted type}. 
\end{definition}

\begin{remark}
\begin{enumerate}
\item In particular if $(\bJ,\bl,\psi)$ is an adapted type, the type $(\bJ,\bl)$ is a $\sigma$-self-dual $\psi$-generic type (in particular $\psi$ is distinguished).
\item By the proof of \cite[Lemma 6.8]{AKMSS}, if $\pi$ is a $\sigma$-self-dual cuspidal $\R$-representation, it contains an adapted type, the point of the remark being that 
the $\N$ of [ibid.] can be chosen to be our $\N$: the group of unipotent upper triangular matrices in $\G$.
\end{enumerate}
\end{remark}

Now let $\pi$ be a $\sigma$-self-dual cuspidal $\R$-representation, and $(\bJ, \bl, \psi)$ be an adapted type of $\pi$. We associate to $(\bJ,\bl,\psi)$ 
the \emph{Paskunas--Stevens Whittaker function} $\W_{\bl}\in \mathcal{W}(\pi,\psi)$ defined in
 \cite[(6.3)]{AKMSS}. Note that,~$\W_{\bl}$ takes values in 
in~$\Zl$ as soon as~$\pi$ is integral (see for example \cite[Lemma 10.2]{KMNagoya}). One of the main results of \cite{AKMSS} is that 
this Whittaker function is a test vector for the Asai $\L$-factor:

\begin{proposition}[{\cite[Theorem 7.14]{AKMSS}}]\label{proptestvectors}
Let~$\pi$ be a distinguished supercuspidal~$\Ql$-representation of~$\G$, and~$\W_{\bl}$ be the explicit Whittaker function defined above. There is a unique nomalisation of the invariant measure on~$\N_\so\backslash \G_\so$ such that
\[\I_\As(\X,\mathbf{1}_{\o_{\F_\so^n}},\W_{\bl})=
(q_\so-1)(q_\so^{n/e_\so(\pi)}-1)\L_{\As}(\X,\pi).\] The volume of $\N_\so\cap \K_\so^1\backslash \K_\so^1$ is of the form $p^l$ for $l\in\mathbb{Z}$ with this normalisation.
\end{proposition}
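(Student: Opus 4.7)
The plan is to establish the formula by unfolding the Asai integral using the explicit support and values of the Paskunas--Stevens Whittaker function $\W_{\bl}$, then extracting the combinatorial factors from the structure of $\bJ^\sigma$. First I would recall that by its definition via the adapted type $(\bJ,\bl,\psi)$, the function $\W_{\bl}$ is supported on $\N\bJ$, and on this support its values are controlled by a fixed generator of the one-dimensional space $\Hom_{\N\cap\bJ}(\bl,\psi)$ (nonzero by $\psi$-genericity), coupled with the distinguished linear form on $\bl$ afforded by Theorem \ref{thm dist type} and Corollary \ref{cor of canonical iso}.

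Next I would unfold the Asai integral along this support. Using the decomposition $\G = \N\bJ \sqcup (\text{complement not contributing})$ via Bruhat-type analysis and the fact that $\eta_n g \in \o_{\F_\so}^n$ essentially forces $g \in \P_\so\cdot\K_\so$, a change of variables reduces $\I_\As(\X,\mathbf{1}_{\o_{\F_\so}^n},\W_{\bl})$ to an integral over $(\N\cap\bJ)^\sigma\backslash\bJ^\sigma$ of the form
\[
\sum_{j\in(\N\cap\bJ)^\sigma\backslash\bJ^\sigma/\bJ_0^\sigma} \W_{\bl}(j)\,\mathbf{1}_{\o_{\F_\so}^n}(\eta_n j)\,\X^{\val(\det j)}\cdot\mathrm{vol}(\bJ_0^\sigma),
\]
where $\bJ_0 = \bJ\cap\K$. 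Using property (T-\ref{Edefinition}) together with \cite[Lemmas 6.18, 6.19, 8.7]{Secherre}, the group $\bJ^\sigma$ is generated by $\J_0^\sigma = (\bJ\cap\K)^\sigma$ and the element $\w'$ of Theorem \ref{thm Xso}; this gives a semi-direct decomposition and reduces the sum to a geometric series in $\X^{\val(\det\w')}=\X^{n/e_\so(\pi)}$, producing $\L_\As(\X,\pi)$ up to a scalar.

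The constant on top of $\L_\As(\X,\pi)$ is then computed as follows. The inner integration over $\J_0^\sigma$ produces (after integrating against $\mathbf{1}_{\o_{\F_\so}^n}(\eta_n \cdot)$) a nonzero pairing of $\bl$ against its distinguished linear form, which by the canonical factorisation of Proposition \ref{prop ssdtpe canonical iso} is nondegenerate. The factor $(q_\so-1)$ arises from the contribution of $\o_\so^\times$ (or equivalently the $\kk_\so^\times$-orbit on the final row $\eta_n$), while the factor $(q_\so^{n/e_\so(\pi)}-1)$ arises as the cardinality of $\X_\so(\pi)$ by Corollary \ref{cor cardinality of Xso} (equivalently as the index in $\bJ^\sigma$ of the subgroup where the matrix coefficient of $\bl$ on the distinguished linear form attains its normalisation; these are dual descriptions of the same object).

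Finally, the measure is fixed by prescribing a specific value to $\mathrm{vol}(\N_\so\cap\K_\so^1\backslash\K_\so^1)$; since $\K_\so^1$ is pro-$p$ and $\N_\so\cap\K_\so^1$ is open in it, this volume is automatically of the form $p^l$ for some $l\in\mathbb{Z}$, whence the last assertion. The main obstacle I anticipate is the case analysis forced by the values $e_\sigma(\pi)\in\{1,2\}$ and the parity of $m(\pi)$, since the generator $\w'$ and the index $[\bJ^\sigma:\J_0^\sigma\langle\w'\rangle]$ take different forms (compare the three cases of Lemma \ref{comparison}); one must verify in each case that the bookkeeping of the characters $\chi_{\bk}$ appearing in Proposition \ref{prop ssdtpe canonical iso} cancels correctly so that the resulting constant is exactly $(q_\so-1)(q_\so^{n/e_\so(\pi)}-1)$ rather than some twist thereof.
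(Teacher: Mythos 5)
Your proposal takes a genuinely different route from the paper, and it also contains a concrete error.

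The paper does not unfold the Asai integral from scratch. It takes the formula of \cite[Theorem 7.14]{AKMSS} as given (the proposition is explicitly attributed to it) and simply tracks how the constant changes under a different normalisation of Haar measure. Concretely, the paper normalises $dg$ by $dg(\K_\so^1)=1$ and $dn(\N_\so\cap\K_\so^1)=1$, observes that this introduces an extra factor of $(q_\so-1)$ into the Tate factor appearing in the AKMSS proof, and isolates the remaining volume factor $dk((\P^\sigma\cap\J^\sigma)\backslash\J^\sigma)$ from \cite[Lemma 6.11]{AKMSS}. Writing
\[dk((\P^\sigma\cap\J^\sigma)\backslash\J^\sigma)=dk((\P^\sigma\cap(\J^1)^\sigma)\backslash(\J^1)^\sigma)\cdot\bigl|\J^\sigma/(\P^\sigma\cap\J^\sigma)(\J^1)^\sigma\bigr|,\]
the pro-$p$ part is absorbed into the overall $p$-power rescaling (which is why the last sentence of the statement holds), and the remaining index is identified with $|\GL_m(\kk_\E)^\sigma/\overline{\P}_m(\kk_\E)^\sigma|$, which equals $q_\so^{n/e_\so(\pi)}-1$ by Lemma \ref{comparison}. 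Redoing the entire support/unfolding computation, as you propose, is not what the paper does and is far more work; though if carried out correctly it would of course also work, since that is essentially the content of the AKMSS proof being cited.

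Within your attempt there is a genuine error. You claim that $(q_\so^{n/e_\so(\pi)}-1)$ arises ``as the cardinality of $\X_\so(\pi)$ by Corollary \ref{cor cardinality of Xso}.'' That corollary gives $|\X_\so(\pi)|=n/e_\so(\pi)$ when $\R=\Ql$, which is a divisor of $n$, not $q_\so^{n/e_\so(\pi)}-1$. The group $\X_\so(\pi)$ controls the period of the geometric series in $\X$ (i.e.\ the exponent $n/e_\so(\pi)$ in $1-\X^{n/e_\so(\pi)}$), not the scalar in front of the $\L$-factor. The scalar $q_\so^{n/e_\so(\pi)}-1$ is the index of a $\sigma$-fixed mirabolic in a $\sigma$-fixed $\GL_m(\kk_\E)$; your parenthetical ``dual descriptions of the same object'' is not substantiated and, as stated, is false. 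This confusion between the order of the relative torsion group and the order of a residual flag variety would derail the constant computation.
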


\begin{proof}
We start with Haar measures~$dg$ and~$dn$ on~$\G_\so$ with values in~$\Ql$ normalised by~$dg(\K_\so^1)=1$ and~$dg(\N_\so\cap \K_\so^1)=1$, which in turns normalises the measure (still denoted~$dg$) on the quotient~$\N_\so\backslash \G_\so$.

With this normalisation, which is the exact parallel of the normalisation used in \cite{KMNagoya} for the analogue Rankin-Selberg computation, first of all we get an extra factor of~$(q_\so-1)$ on the top of the Tate factor defined before~\cite[Lemma 7.11]{AKMSS}. 
Then there is a factor~$dk((\P^\sigma\cap\K^\sigma)\backslash \J^\sigma)$ which appears in~\cite[Lemma 6.11]{AKMSS}, and we have
\[dk((\P^\sigma\cap\J^\sigma)\backslash \J^\sigma)=dk((\P^\sigma\cap(\J^1)^\sigma)\backslash (\J^1)^\sigma)|\J^\sigma/(\P^\sigma\cap\J^\sigma)(\J^1)^\sigma)|.\]
As~$dk((\P^\sigma\cap(\J^1)^\sigma)\backslash (\J^1)^\sigma)$ is a (possibly negative) power of~$p$ we can renormalise our measure to remove it. The image of~$\P\cap \J$ modulo~$\J^1$ is a~$\sigma$-stable mirabolic~$\overline{\P}_m(\kk_\E)$ of~$\J/\J^1$, and we thus have 
\begin{equation*}
|\J^\sigma/(\P^\sigma\cap\J^\sigma)(\J^1)^\sigma)|=|\GL_m(\kk_\E)^\sigma/\overline{\P}_m(\kk_\E)^\sigma|=q_0^{n/e_\so(\pi)}
-1\end{equation*}
thanks to Lemma \ref{comparison}.
\end{proof}

\begin{corollary}\label{cor rel banal and red mod ell of the L factor}
Suppose that~$\pi$ is an unramified twist of a relatively banal distinguished cuspidal~$\Fl$-representation, and let~$\widetilde{\pi}$ be a supercuspidal lift of~$\pi$.  Then
\[\L_\As(\X,\pi)=r_{\ell}(\L_\As(\X,\widetilde{\pi})).\]
\end{corollary}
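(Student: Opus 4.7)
My plan is to apply the explicit test-vector identity of Proposition \ref{proptestvectors} to a distinguished supercuspidal lift, reduce modulo $\ell$, and then use the relatively banal hypothesis to invert the resulting scalar factor.

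First, a direct change of variables in the defining Asai integral shows that for any unramified character $\widetilde{\nu}$ of $\G$ and any representation $\widetilde{\sigma}$ of Whittaker type, $\L_\As(\X,\widetilde{\nu}\otimes\widetilde{\sigma})=\L_\As(\beta\X,\widetilde{\sigma})$ for a scalar $\beta\in\Zl^\times$ depending only on $\widetilde{\nu}$, and the analogous identity holds on the $\Fl$-side. Since reduction modulo $\ell$ commutes with the substitution $\X\mapsto r_\ell(\beta)\X$, Theorem \ref{thm relatively banal distinction lift}(i) lets us write $\widetilde{\pi}=\widetilde{\nu}\otimes\widetilde{\pi}'$ with $\widetilde{\pi}'$ distinguished supercuspidal, and reduces the statement to the case where $\widetilde{\pi}$ itself is distinguished. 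In that case $\pi=r_\ell(\widetilde{\pi})$ is distinguished by Theorem \ref{thm distinction reduces mod ell} and still relatively banal, since the invariant $e_\so$ is preserved under unramified twists.

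Next, fix an adapted integral $\sigma$-self-dual type $(\bJ,\widetilde{\bl},\psi)$ for $\widetilde{\pi}$. With the normalisation of the Haar measure on $\N_\so\backslash\G_\so$ prescribed in Proposition \ref{proptestvectors} (which is $\Zl$-valued since $\ell\neq p$), we have
\[\I_\As(\X,\mathbf{1}_{\o_{\F_\so^n}},\W_{\widetilde{\bl}}) = (q_\so-1)(q_\so^{n/e_\so(\widetilde{\pi})}-1)\,\L_\As(\X,\widetilde{\pi}),\]
where $\W_{\widetilde{\bl}}$ takes values in $\Zl$ and $\L_\As(\X,\widetilde{\pi})$ is the inverse of a polynomial in $\Zl[\X]$ by Lemma \ref{Lfactordivision}(i). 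Setting $\bl=r_\ell(\widetilde{\bl})$, compatibility of the type-theoretic construction with reduction modulo $\ell$ (property (T-9) together with Remark \ref{remark dist of kappa reduces}) shows that $(\bJ,\bl)$ is an adapted distinguished type for $\pi$; the invariant $e_\so$ agrees for $\pi$ and $\widetilde{\pi}$; and inspection of the explicit definition of the Paskunas--Stevens function in \cite[(6.3)]{AKMSS} yields $r_\ell(\W_{\widetilde{\bl}})=\W_{\bl}$. Reducing the displayed identity modulo $\ell$ therefore gives
\[\I_\As(\X,\mathbf{1}_{\o_{\F_\so^n}},\W_{\bl}) = r_\ell\bigl((q_\so-1)(q_\so^{n/e_\so(\pi)}-1)\bigr)\cdot r_\ell(\L_\As(\X,\widetilde{\pi})).\]

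Finally, the relatively banal hypothesis $q_\so^{n/e_\so(\pi)}\not\equiv 1\pmod{\ell}$ also forces $q_\so\not\equiv 1\pmod{\ell}$, so the scalar above is a unit of $\Fl^\times$. The displayed identity thus shows that $r_\ell(\L_\As(\X,\widetilde{\pi}))$ lies in the fractional $\Fl[\X^{\pm 1}]$-ideal generated by the Asai integrals of $\pi$, hence $\L_\As(\X,\pi)\mid r_\ell(\L_\As(\X,\widetilde{\pi}))$. Combined with the reverse divisibility from Lemma \ref{Lfactordivision}(ii) and the fact that both sides are inverses of polynomials with constant term $1$, we conclude the desired equality. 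I expect the main technical obstacle to be the verification that $r_\ell(\W_{\widetilde{\bl}})=\W_{\bl}$, which requires unwinding the Paskunas--Stevens construction step by step and checking that each ingredient is compatible with reduction modulo $\ell$.
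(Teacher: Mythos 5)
Your argument is correct and takes essentially the same route as the paper: both reduce to a distinguished supercuspidal lift via Theorem \ref{thm relatively banal distinction lift}, invoke the test-vector identity of Proposition \ref{proptestvectors}, and use the relatively banal hypothesis to see that the scalar~$(q_\so-1)(q_\so^{n/e_\so(\pi)}-1)$ is a unit modulo~$\ell$, then conclude by combining the resulting ideal membership with the divisibility of Lemma \ref{Lfactordivision}(ii). The only difference is one of presentation: the paper compresses the reduction-modulo-$\ell$ of the test-vector identity into a reference to \cite[Corollary~10.1 and Proposition~9.3]{KMNagoya} and performs the unramified twist at the end, whereas you carry out the same steps explicitly (integrality of the measure, compatibility $r_\ell(\W_{\widetilde{\bl}})=\W_{\bl}$, the ideal-membership argument) and twist at the start.
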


\begin{proof}
Let $\widetilde{\pi}$ be such a lift, thanks to Theorem \ref{thm relatively banal distinction lift} there is an 
unramified character~$\widetilde{\chi}$ of $\F^\times$ such that $\widetilde{\pi}_0=(\widetilde{\chi}\circ \det)^{-1}\otimes \widetilde{\pi}$ 
is distinguished. Let $(\bJ,\widetilde{\bl},\widetilde{\psi})$ be an adapted type of $\widetilde{\pi}_\so$. 
Proposition \ref{proptestvectors} then implies that 
\[\I_\As(\X,\mathbf{1}_{\o_{\F_\so^n}},\W_{\bl})=
(q_\so-1)(q_\so^{n/e_\so(\pi)}-1)\L_\As(\X,\pi_\so).\] 
Then setting $\pi_\so=r_\ell(\widetilde{\pi}_\so)$,  we deduce that \[\L_\As(\X,\pi_\so)=r_{\ell}(\L_\As(\X,\widetilde{\pi}_\so))\]
in the exact same way that \cite[Corollary 10.1]{KMNagoya} follows from \cite[Proposition 9.3]{KMNagoya}. We obtain the statement of the corollary by twisting $\widetilde{\pi}_\so$ by $\widetilde{\chi}\circ \det$ in this equality as it sends $\X$ to $\chi(\w_\so)\X$ on the left hand side and to $\widetilde{\chi}(\w_\so)\X$ on the right hand side.
\end{proof}

\subsection{Asai~$\L$-factors of cuspidal~representations}\label{section computation Asai}

We first recall the computation of the Asai~$\L$-function of a cuspidal~$\Ql$-representation:

\begin{proposition}[{\cite[Corollary 7.6]{AKMSS} and Remark 7.7}]\label{prop elladicLfactor}
Let~$\pi$ be a cuspidal~$\Ql$-representation.  If no unramified twist of~$\pi$ is distinguished then~$\L_{\As}(\X,\pi)=1$.  If~$\pi$ is distinguished then
\[\L_{\As}(\X,\pi)=\frac{1}{1-\X^{n/e_\so(\pi)}}.\]

\end{proposition}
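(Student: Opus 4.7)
The plan is to combine the explicit test-vector formula of Proposition~\ref{proptestvectors} with the general principle that poles of the Asai $\L$-factor encode distinguished unramified twists of $\pi$ in the $\Ql$ setting.

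I would begin with the first assertion. Suppose no unramified twist of $\pi$ is $\G_\so$-distinguished; I claim $\L_\As(\X,\pi)=1$. By Lemma~\ref{Lfactordivision}, $\L_\As(\X,\pi)=1/\P(\X)$ for some $\P\in\Ql[\X]$ with $\P(0)=1$, so it suffices to prove regularity everywhere. A standard residue argument, applied to the fractional ideal generated by the Asai integrals, shows that a pole of $\L_\As(\X,\pi)$ at $\X=s_0$ produces a nonzero element of $\mathrm{Hom}_{\G_\so}(\pi,\chi_{s_0})$, where $\chi_{s_0}$ is the unramified character of $\G_\so$ given by $g\mapsto s_0^{-\val_\so(\det g)}$; equivalently, the unramified twist $\chi_{s_0}^{-1}\cdot\pi$ is distinguished. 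Under our hypothesis this is impossible, so $\L_\As(\X,\pi)$ has no poles and must equal $1$. (This step relies on the known characterisation of the pole at $\X=1$ via distinction in the $\Ql$ setting, combined with the elementary twist identity $\L_\As(\X,\chi\pi)=\L_\As(\chi(\w_\so)\X,\pi)$.)

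For the distinguished case I would invoke Proposition~\ref{proptestvectors}, which gives
\[\I_\As(\X,\mathbf{1}_{\o_{\F_\so^n}},\W_\bl)=(q_\so-1)\bigl(q_\so^{n/e_\so(\pi)}-1\bigr)\L_\As(\X,\pi),\]
and then compute the left-hand side directly. Unfolding the integral via the Iwasawa decomposition of $\G_\so$ and using cuspidality of $\pi$ to restrict $\W_\bl$ to its very small support on the diagonal torus (modulo the mirabolic), together with the $\bJ^\sigma$-equivariance of the Paskunas--Stevens Whittaker function, the integrand against $\Phi=\mathbf{1}_{\o_{\F_\so^n}}$ is supported on nonnegative powers of the element $\w'$ from Theorem~\ref{thm Xso}. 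The key numerical input is the identity $\val_\so(\det\w')=n/e_\so(\pi)$, which is precisely the computation carried out in the proof of Theorem~\ref{thm Xso}(ii). Consequently the $l$-th Asai coefficient vanishes unless $l$ is a nonnegative multiple of $n/e_\so(\pi)$, on which indices it takes a constant value; summing the resulting geometric series in $\X^{n/e_\so(\pi)}$ and dividing by the normalising constant yields $\L_\As(\X,\pi)=1/(1-\X^{n/e_\so(\pi)})$.

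The main obstacle will be the precise computation of the values of $\W_\bl$ on the relevant coset representatives in $\N_\so\backslash\G_\so$, and the careful choice of Haar measures so that the constant $(q_\so-1)(q_\so^{n/e_\so(\pi)}-1)$ emerges exactly as in Proposition~\ref{proptestvectors}. This is the technical heart of \cite[\S6--\S7]{AKMSS} and ultimately depends on the fine structure of $\bJ^\sigma$ modulo $\J^\sigma$ provided by \cite[Lemmas 6.18, 6.19, 8.7]{Secherre}; once those ingredients are in hand, the periodicity $n/e_\so(\pi)$ of the Asai coefficients is forced by the valuation of $\det\w'$ and the cuspidality of $\pi$.
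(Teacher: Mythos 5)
The paper does not prove this proposition; it simply cites it from~\cite[Corollary~7.6 and Remark~7.7]{AKMSS}, so there is no internal proof to compare against. Your sketch is, however, a reasonable and essentially accurate description of how the result is obtained in that reference: the first assertion does rest on a Kable/Anandavardhanan--Kable--Tandon pole-characterisation combined with the twist identity~$\L_\As(\X,\chi\pi)=\L_\As(\chi(\w_\so)\X,\pi)$, and the second rests on the explicit computation of the Paskunas--Stevens test-vector integral whose periodicity is controlled by~$\val_\so(\det\w')=n/e_\so(\pi)$.

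One caveat on the second half: as you have written it, you invoke Proposition~\ref{proptestvectors}, which already asserts $\I_\As(\X,\mathbf{1}_{\o_{\F_\so^n}},\W_\bl)=c\,\L_\As(\X,\pi)$, and then propose to compute the left-hand side to read off $\L_\As$. As stated this is logically valid only if the test-vector identity is known independently of the $\L$-factor formula you are trying to prove; otherwise the argument is circular. In~\cite{AKMSS} the ``first equality'' of Theorem~7.14 is precisely the independent evaluation of the integral (as the paper's Remark~\ref{remark Kable} emphasises), and the test-vector statement is then a consequence of comparing that evaluation either with the already-known $\L$-factor or with the $a~priori$ bound on the pole set of $\L_\As$ supplied by the $\Ql$-analogue of Kable's theorem together with simplicity of cuspidal poles. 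A cleaner version of your paragraph would therefore compute $\I_\As(\X,\mathbf{1}_{\o_{\F_\so^n}},\W_\bl)$ directly (your geometric-series argument), note from the first assertion and the twist identity that the poles of $\L_\As$ are simple and located exactly at the $s_0$ with $\chi_{s_0}^{-1}\pi$ distinguished, count these via Theorem~\ref{thm Xso} to be $n/e_\so(\pi)$ in number, and then match numerator degrees; Proposition~\ref{proptestvectors} would then be a byproduct rather than an input. Beyond that, the hard analytic content (the exact support of $\W_\bl$ relative to $\N_\so\backslash\G_\so$ and the normalising constant) is genuinely deferred to~\cite[\S\S6--7]{AKMSS}, as you rightly acknowledge.
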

This gives a complete description in the cuspidal case, as for an unramified character~$\chi:\F^\times\rightarrow \K^\times$ we have
\[\L_{\As}(\X,(\chi\circ\det)\otimes\pi)=\L_{\As}(\chi(\w_\so) \X,\pi).\]

\begin{theorem}\label{thm AsaiLfunctioncomputation}
Let~$\pi$ be a cuspidal~$\Fl$-representation of~$\GL_n(\F)$.  
\begin{enumerate}
\item If~$\pi$ is an unramified twist~$(\chi\circ\det)\otimes\pi_0$ of a relatively banal distinguished representation~$\pi_0$, then 
\[\L_{\As}(\X,\pi)=\frac{1}{1-(\chi(\w_\so)\X)^{n/e_\so(\pi)}}.\]   
\item If~$\pi$ is not an unramified twist of a relatively banal distinguished representation, then\[\L_{\mathrm{As}}(\X,\pi)=1.\]
\end{enumerate}
\end{theorem}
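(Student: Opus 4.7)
The strategy is to reduce both parts to the $\Ql$-computation of Asai $\L$-factors in Proposition \ref{prop elladicLfactor}, using the lifting results of Section \ref{section relatively bana reps} together with the divisibility from Lemma \ref{Lfactordivision}(ii). Part (i) is handled by a direct reduction modulo $\ell$ of an $\ell$-adic $\L$-factor, which survives precisely because of the relatively banal assumption; part (ii) is handled by producing a lift whose $\L$-factor is trivial on the nose.

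For part (i), the twisting relation $\L_{\As}(\X, (\chi \circ \det) \otimes \pi_0) = \L_{\As}(\chi(\w_\so) \X, \pi_0)$ recalled after Proposition \ref{prop elladicLfactor} reduces the computation to the case where $\pi_0$ is itself relatively banal distinguished. Theorem \ref{thm relatively banal distinction lift}(ii) then provides a distinguished supercuspidal lift $\widetilde{\pi}_0$ of $\pi_0$, and combining Corollary \ref{cor rel banal and red mod ell of the L factor} with Proposition \ref{prop elladicLfactor} yields
\[
\L_{\As}(\X, \pi_0) = r_\ell\Bigl(\frac{1}{1 - \X^{n/e_\so(\widetilde{\pi}_0)}}\Bigr).
\]
This reduction is well defined and nonzero since the polynomial $1 - \X^{n/e_\so(\widetilde{\pi}_0)}$ has coefficients $\pm 1 \in \Zl^\times$; combined with the invariance $e_\so(\widetilde{\pi}_0) = e_\so(\pi_0)$ recalled in Section \ref{GSDTsection}, this gives the formula in (i).

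For part (ii), I aim to show that $\pi$ admits a supercuspidal lift $\widetilde{\pi}$ which is not an unramified twist of a distinguished $\Ql$-representation; for such a lift Proposition \ref{prop elladicLfactor} gives $\L_{\As}(\X, \widetilde{\pi}) = 1$, and Lemma \ref{Lfactordivision}(ii) then forces $\L_{\As}(\X, \pi) = 1$. To produce such a lift I split into two cases. If no unramified twist of $\pi$ is distinguished, then Theorem \ref{thm distinction reduces mod ell} shows that no lift of $\pi$ can be an unramified twist of a distinguished $\Ql$-representation (the reduction of such a lift would yield a distinguished unramified twist of $\pi$), so any supercuspidal lift works. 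Otherwise, some unramified twist $\pi_0$ of $\pi$ is distinguished, and the hypothesis of (ii) forces $\pi_0$ to be non-relatively-banal; Theorem \ref{thm relatively banal distinction lift}(i) then yields a lift $\widetilde{\pi}_0$ of $\pi_0$ which is not an unramified twist of a distinguished representation, and tensoring by any $\Ql$-lift of the unramified twisting character gives the desired lift of $\pi$.

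The central input is Theorem \ref{thm relatively banal distinction lift}, which characterises relatively banal distinction via lifting behaviour; once that is granted, no serious obstacle remains and the rest of the argument is bookkeeping with the $\Ql$-formula, the twisting relation, and the divisibility of $\L$-factors under reduction modulo $\ell$.
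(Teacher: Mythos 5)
Your proof is correct and follows essentially the same route as the paper: reduce part (i) to the $\ell$-adic formula via Corollary \ref{cor rel banal and red mod ell of the L factor} and Proposition \ref{prop elladicLfactor}, and for part (ii) produce a supercuspidal lift with trivial $\L$-factor and conclude via the divisibility in Lemma \ref{Lfactordivision}. The only (welcome) difference is that in part (ii) you explicitly split into the case where no unramified twist of $\pi$ is distinguished, handled via Theorem \ref{thm distinction reduces mod ell}, whereas the paper cites Theorem \ref{thm relatively banal distinction lift} somewhat compactly even though that theorem is stated only for $\pi$ already distinguished; your version makes the case analysis explicit without changing the substance.
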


\begin{proof}
If~$\pi$ is an unramified twist of a relatively banal distinguished representation the statement follows for example from 
Corollary \ref{cor rel banal and red mod ell of the L factor} and Proposition \ref{prop elladicLfactor}.  

If~$\pi$ is not an unramified twist of a relatively banal distinguished representation, it has a supercuspidal lift~$\widetilde{\pi}$ which is not an unramified twist of a distinguished representation thanks to Theorem \ref{thm relatively banal distinction lift}. By Proposition \ref{prop elladicLfactor} we have~$\L_\As(\X,\widetilde{\pi})=1$, hence~$\L_\As(\X,\pi)=1$ as~$\L_\As(\X,\pi)\mid r_{\ell}(\L_\As(\X,\widetilde{\pi}))$ by Lemma \ref{Lfactordivision}.
\end{proof}

\begin{remark}
Note that when~$\ell=2$, then we are in case (ii) of Theorem \ref{thm AsaiLfunctioncomputation} and 
$\L_{\As}(\X,\pi)=1$. This can also be seen directly from the asymptotics of Whittaker functions. Without entering the details as we don't need this, the asymptotic expansion of Whittaker functions on the diagonal torus allow one to express the Asai integrals in terms of Tate integrals for~$\F_\so^\times$, and these Tate integrals are all~$1$ because~$q=1[2]$ as shown in \cite{MinguezZeta}.
\end{remark}

\section{Distinction and poles of the Asai~$\L$-factor}\label{section pole Asai}

\subsection{Characterisation of the poles of the Asai~$\L$-factor}\label{section characterisation of pole of Asai}

We are now in position to prove the main results of this paper:

\begin{theorem}\label{thm Pole of Asai}
Let~$\pi$ be a cuspidal~$\Fl$-representation of~$\G$. Then~$\L_\As(\X,\pi)$ has a pole at~$\X=1$ if and only if~$\pi$ is relatively banal distinguished. In this case, the pole is of order~$\ell^r$ where~$n/e_\so(\pi)=a\ell^r$ with~$a$ prime to~$\ell$.
\end{theorem}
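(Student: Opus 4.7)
The plan is to leverage the explicit $\L$-factor computation of Theorem \ref{thm AsaiLfunctioncomputation}, which already reduces the problem to inspecting a single denominator at $\X=1$. If $\pi$ is not an unramified twist of a relatively banal distinguished representation, then $\L_\As(\X,\pi)=1$ has no pole; and in that case $\pi$ itself cannot be relatively banal distinguished, since otherwise it would be the trivial unramified twist of itself. This disposes of one direction and the ``empty'' case.

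In the remaining case, write $\pi=(\chi\circ\det)\otimes\pi_0$ with $\pi_0$ relatively banal distinguished and $\chi$ an unramified character of $\F^\times$, so that
\[\L_\As(\X,\pi)=\frac{1}{1-(\chi(\w_\so)\X)^{n/e_\so(\pi)}}.\]
A pole at $\X=1$ occurs exactly when $\chi(\w_\so)^{n/e_\so(\pi)}=1$. I would then invoke Theorem \ref{thm Xso}(ii) applied to the distinguished representation $\pi_0$ (observing that $e_\so$, being a type-theoretic invariant attached to a parameter field, is preserved under unramified twists so $e_\so(\pi)=e_\so(\pi_0)$) to rewrite this algebraic condition as $\chi|_{\F_\so^\times}\circ\det\in\X_\so(\pi_0)$. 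Since $\X_\so(\pi_0)$ is a group this is equivalent to $\chi^{-1}|_{\F_\so^\times}\circ\det\in\X_\so(\pi_0)$, which by definition is equivalent to $\pi=(\chi\circ\det)\otimes\pi_0$ being $\G_\so$-distinguished. Because $q_\so^{n/e_\so(\pi)}=q_\so^{n/e_\so(\pi_0)}\not\equiv 1\pmod\ell$, the relatively banal property transfers from $\pi_0$ to $\pi$. Conversely, any relatively banal distinguished $\pi$ is trivially an unramified twist of itself, closing the equivalence.

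For the order statement, set $N=n/e_\so(\pi)=a\ell^r$ with $(a,\ell)=1$. Using Frobenius in $\Fl[\X]$, we have $1-\X^N=(1-\X^a)^{\ell^r}$, and since $(a,\ell)=1$ the derivative of $1-\X^a$ at $\X=1$ equals $-a\neq 0$ in $\Fl$, so $\X-1$ is a simple root of $1-\X^a$. Hence $(\X-1)^{\ell^r}$ divides $1-\X^N$ but $(\X-1)^{\ell^r+1}$ does not (and after the change of variable $\X\mapsto \chi(\w_\so)\X$, which fixes $\X=1$ once $\chi(\w_\so)^N=1$, the same multiplicity is observed), yielding a pole of $\L_\As(\X,\pi)$ of order exactly $\ell^r$. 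The only nontrivial step in this argument is the identification of the algebraic vanishing condition $\chi(\w_\so)^{n/e_\so(\pi)}=1$ with distinction via Theorem \ref{thm Xso}; everything else is a direct calculation given Theorem \ref{thm AsaiLfunctioncomputation}.
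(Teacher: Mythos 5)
Your proof is correct and follows essentially the same route as the paper's: reduce to Theorem \ref{thm AsaiLfunctioncomputation}, observe the pole condition becomes $\chi(\w_\so)^{n/e_\so(\pi)}=1$, translate this via Theorem \ref{thm Xso}(ii) into membership in $\X_\so(\pi_0)$ (hence distinction of $\pi$), and read off the pole order from the factorisation $1-\X^N=(1-\X^a)^{\ell^r}$ in characteristic~$\ell$. You are in fact slightly more careful than the printed proof at two points that it leaves implicit, namely invoking the group structure of $\X_\so(\pi_0)$ to pass from $\chi_\so$ to $\chi_\so^{-1}$ and noting that $e_\so$ is invariant under unramified twisting so that relative banality transfers from $\pi_0$ to $\pi$.
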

\begin{proof}
 If~$\L_\As(\X,\pi)$ has a pole at~$\X=1$, in particular~$\L_\As(\X,\pi)$ is not equal to~$1$, hence the representation~$\pi$ is an unramified twist of a relatively banal distinguished cuspidal~$\Fl$-representation~$\pi_0$, say~$\pi=(\chi\circ \det)\otimes \pi_0$ with~$\chi$ an unramified characer of $\F^\times$. Denote by $\chi_\so$ the restriction of $\chi$ to $\F_\so^\times$,  then by Theorem \ref{thm AsaiLfunctioncomputation},
\[\L_{\As}(\X,\pi)=\frac{1}{1-(\chi_\so(\w_\so)\X)^{n/e_\so(\pi)}}=\frac{1}{(1-(\chi_\so(\w_\so)\X)^{a})^{\ell^r}}\] which has a pole at~$\X=1$ if and only if~$\chi_\so(\w_\so)^{n/e_\so(\pi)}=1$. By Theorem \ref{thm Xso} this implies that 
$\chi_\so\circ \det$ belongs to $\X_\so(\pi)$, i.e. that~$\pi=(\chi\circ \det)\otimes \pi_0$ is distinguished. The converse is just Theorem \ref{thm AsaiLfunctioncomputation}.
\end{proof}

\begin{remark}\label{remark Kable}
Note that our proof of Theorem \ref{thm Pole of Asai} is very different from the proof over the field of complex numbers. In the proof above, the direction~$\pi$ relatively banal distinguished implies~$\L_\As(\X,\pi)$ having a pole at~$1$ is an immediate consequence of Theorem \ref{thm AsaiLfunctioncomputation}, and works for complex representations as well (in which case we consider all cuspidal distinguished~$\mathbb{C}$-representations as ``relatively banal") thanks to \cite[Corollary 7.6]{AKMSS}. Saying this is not enough to claim a proof in the case of complex cuspidal representations different from the original one given in \cite[Theorem 1.4]{AKT}, as the way the equality of \cite[Corollary 7.6]{AKMSS} is obtained is a consequence of \cite[Proposition 6.3]{MatringeManuscripta}, which itself follows either from \cite[Theorem 3.1]{MatringeManuscripta} or from \cite[Theorem 1.4]{AKT} and \cite[Theorem 4]{Kable}, together with the fact that the poles of the Asai~$\L$-factor are simple in the cuspidal case. However,  the first equality in \cite[Theorem 7.14]{AKMSS} is independant of the results cited above, and it in particular implies that if~$\pi$ is a cuspidal distinguished~$\mathbb{C}$-representation, its Asai~$\L$-factor has a pole at~$\X=1$.
The proof of the other implication that we give also works in the complex case, and is again different from the original proof given in \cite[Theorem 4]{Kable}. Kable shows that if~$\L_\As(\X,\pi)$ has a pole at~$\X=1$, the rational function~$(1-\X)\I_{\As}(X,\W,\Phi)$ is regular at 
$\X=1$ and that up to a nonzero constant independant of~$\W$ and~$\Phi$ that its value at~$\X=1$ is given by 
\[\Phi(0)\int_{\Z^\sigma \N^\sigma\backslash \G^\sigma} \W(h)dh.\] As by assumption the Asai~$\L$-factor has a pole at~$\X=1$ the~$\G^\sigma$-invariant linear form  \[\mathcal{L}_\pi: \W\mapsto \int_{\Z^\sigma \N^\sigma\backslash \G^\sigma} \W(h)dh\] is nonzero.
Note that to adapt this proof to the modular setting with~$\R=\Fl$ we would need to take~$1-\X^{n/e_\so(\pi)}$ where he takes~$1-\X$ (this does not matter over~$\mathbb{C}$ as both polynomials have a simple zero at~$\X=1$) to get the correct order of the pole, though from Kable's proof one sees that the natural choice is in fact~$1-\X^n$. But we claim that this can't be done in general, as we shall now see that the local period~$\mathcal{L}_\pi$, though well defined for cuspidal~$\Fl$-representations might vanish even for 
relatively banal distinguished cuspidal~$\Fl$-representations.
\end{remark}

\subsection{The~$\G_{\so}$-period of cuspidal distinguished representations}\label{section the period of cuspidal distinguished reps}\label{section the period}
Let~$\pi$ be a cuspidal distinguished~$\R$-representation, and we still denote by $\psi$ a distinguished nondegenerate character of $\N$.  
There are two natural~$\G^\sigma$-invariant linear forms on 
$\mathcal{W}(\pi,\psi)$. The first is \[\mathcal{P}_\pi:\W\mapsto \int_{\N^\sigma\backslash \P^\sigma} \W(p)dp\] which is well-defined and nonzero 
thanks to \cite[Chapter III, Theorem 1.1]{Vig96}. Though it does not look~$\G^\sigma$-invariant it is by \cite[Proposition B.23]{AKMSS}.  
Let $(\bJ,\bl, \psi)$ be an adapted type in~$\pi$. A natural test vector for this linear form is the Paskunas-Stevens 
Whittaker function~$\W_{\bl}$: we have~$\mathcal{P}_\pi(\W_{\bl})\neq 0$ according to the proof of \cite[Proposition 6.5]{AKMSS}.

The second is \[\mathcal{L}_\pi: \W\mapsto \int_{\Z^\sigma \N^\sigma\backslash \G^\sigma} \W(h)dh.\] It is~$\G^\sigma$-invariant by definition, and well defined, as all~$\W\in \mathcal{W}(\pi,\psi)$ have compact support on~$\N\backslash \P$, they have compact support of~$\Z\N\backslash \G$ thanks to the Iwasawa decomposition~$\G=\P\Z\K$. By cuspidal multiplicity one for the pair $(\P, \P^\sigma)$ (\cite[Proposition B.23]{AKMSS}),~$\mathcal{L}_\pi$ is a multiple of~$\mathcal{P}_{\pi}$ and the proportionality constant between them turns out to be 
a very interesting quantity; this scalar is related to the formal degrees of complex discrete series representations of unitary groups (see \cite{Anand-Matringe18} and Remark \ref{remarklinearforms}).  When~$\R=\Ql$ the linear form~$\mathcal{L}_\pi$ is nonzero (Remark \ref{remark Kable}); here, we solve the problem of understanding when~$\mathcal{L}_\pi$ is nonzero when~$\R=\Fl$:

\begin{theorem}\label{thm nonvanishingperiod}
Let~$\pi$ be a cuspidal distinguished~$\Fl$-representation of~$\G$, then~$\mathcal{L}_\pi$ is nonzero if and only if:
\begin{enumerate}
\item~$\pi$ is relatively banal.
\item~$\ell$ does not divide~$e_\so(\pi)$.
\end{enumerate}
\end{theorem}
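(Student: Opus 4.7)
My plan is to reduce, via multiplicity one, to computing $\mathcal{L}_\pi(\W_\bl)$, then to relate this value to the Asai integral through a formal telescoping identity, and finally to split into the relatively banal and non-relatively-banal cases. Both $\mathcal{L}_\pi$ and $\mathcal{P}_\pi$ are $\G^\sigma$-invariant linear forms on $\pi$, so Proposition~\ref{proposition p-adic mult 1 and ssduality} forces $\mathcal{L}_\pi = c\cdot\mathcal{P}_\pi$ for some scalar $c\in\Fl$; since $\mathcal{P}_\pi(\W_\bl)\neq 0$, the problem reduces to deciding when $\mathcal{L}_\pi(\W_\bl)\neq 0$.

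The heart of the argument is a formal identity, valid over any coefficient ring and relying only on the $\Z_\so$-invariance of $\W\in\mathcal{W}(\pi,\psi)$ (which holds because the central character of the distinguished representation $\pi$ is trivial on $\F_\so^\times$):
\[\mathrm{vol}(\oo_\so^\times)\cdot\mathcal{L}_\pi(\W) \;=\; \lim_{\X\to 1}(1-\X^n)\,\I_\As(\X,\mathbf{1}_{\oo_\so^n},\W). \qquad(\ast)\]
To prove $(\ast)$, write $\I_\As(\X,\mathbf{1}_{\oo_\so^n},\W)=\sum_l a_l\X^l$ with $a_l=\int_{\N_\so\backslash(\G_\so)_l}\W(g)\mathbf{1}_{\oo_\so^n}(\eta_n g)\,dg$; multiplying by $(1-\X^n)$ telescopes the coefficients into $a_l-a_{l-n}$, and a change of variable by $\w_\so$ (using $\W(\w_\so g)=\W(g)$) identifies each $a_l-a_{l-n}$ with an integral of $\W$ against the characteristic function of primitive vectors $\oo_\so^n\setminus\w_\so\oo_\so^n$. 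Since each $\Z_\so$-orbit of a nonzero vector in $\F_\so^n$ contains a unique $\oo_\so^\times$-coset of primitive representatives, evaluating at $\X=1$ and integrating out the $\Z_\so$-factor yields the right-hand side; cuspidality of $\pi$ ensures only finitely many terms contribute after telescoping.

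With $(\ast)$ in hand, I split into cases. If $\pi$ is relatively banal then $\ell\nmid q_\so-1$ (so $\mathrm{vol}(\oo_\so^\times)$ is an $\ell$-unit for a suitable normalization), and Theorem~\ref{thm relatively banal distinction lift}(ii) provides a distinguished supercuspidal lift $\widetilde{\pi}$. Reducing the formula of Proposition~\ref{proptestvectors} modulo $\ell$ as in Corollary~\ref{cor rel banal and red mod ell of the L factor}, substituting into $(\ast)$, and using the elementary identity $\lim_{\X\to 1}(1-\X^n)/(1-\X^{n/e_\so(\pi)})=e_\so(\pi)$, I expect to obtain, up to an $\ell$-unit,
\[\mathcal{L}_\pi(\W_\bl) \;=\; (q_\so^{n/e_\so(\pi)}-1)\cdot e_\so(\pi) \quad\text{in }\Fl.\]
The relatively banal hypothesis forces $q_\so^{n/e_\so(\pi)}-1\neq 0$ in $\Fl$, so this is nonzero precisely when $\ell\nmid e_\so(\pi)$. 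If instead $\pi$ is distinguished but not relatively banal, then $\pi$ cannot be an unramified twist of any relatively banal distinguished representation (since $e_\so$ is preserved by unramified twists), so Theorem~\ref{thm AsaiLfunctioncomputation}(ii) gives $\L_\As(\X,\pi)=1$; hence $\I_\As(\X,\mathbf{1}_{\oo_\so^n},\W_\bl)$ is a Laurent polynomial in $\Fl[\X^{\pm 1}]$, and $(1-\X^n)$ annihilates it at $\X=1$, so $(\ast)$ yields $\mathcal{L}_\pi(\W_\bl)=0$ whenever $\mathrm{vol}(\oo_\so^\times)$ is nonzero in $\Fl$.

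The main obstacle I foresee is the degenerate subcase $\ell\mid q_\so-1$, which can occur only when $\pi$ is not relatively banal and in which $\mathrm{vol}(\oo_\so^\times)$ vanishes in $\Fl$, making $(\ast)$ trivially $0=0$. Handling this will require either a refinement of the formal identity using a less symmetric test function than $\mathbf{1}_{\oo_\so^n}$ (so as to replace the $\mathrm{vol}(\oo_\so^\times)$ factor by $\mathrm{vol}(1+\p_\so)$, which remains an $\ell$-unit), or a direct computation of $\mathcal{L}_\pi(\W_\bl)$ from the explicit support of the Paskunas--Stevens function modulo $\Z_\so\N_\so$, paralleling the calculation of $\mathcal{P}_\pi(\W_\bl)$ in \cite{AKMSS}. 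Once this subtlety is resolved, everything else is an explicit manipulation of the rational functions furnished by Theorem~\ref{thm AsaiLfunctioncomputation}.
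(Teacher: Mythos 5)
Your overall strategy — reduce to a test vector via multiplicity one, relate $\mathcal{L}_\pi(\W_\bl)$ to the Asai integral through a telescoping residue identity, and then feed in the explicit $\L$-factor computation — is in the same spirit as the paper's proof, and your relatively banal case works: the identity $(\ast)$ is correct (the Laurent polynomial $(1-\X^n)\I_\As(\X,\mathbf{1}_{\oo_\so^n},\W)$ evaluated at $\X=1$ really does give $\mathrm{vol}(\oo_\so^\times)\mathcal{L}_\pi(\W)$, by the telescope into primitive vectors and the unfolding of the $\Z_\so$-integral), and since relatively banal forces $q_\so\not\equiv 1\,[\ell]$ the volume factor is an $\ell$-unit, so you can read off the vanishing from $e_\so(\pi)\cdot(q_\so^{n/e_\so(\pi)}-1)$ in $\Fl$.

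The genuine gap is the one you yourself flag: when $\pi$ is not relatively banal and $\ell\mid q_\so-1$, the identity $(\ast)$ degenerates to $0=0$ and gives no information. Your proposed fix (a) is unlikely to rescue this. The $\mathrm{vol}(\oo_\so^\times)$ factor is not an artefact of choosing $\mathbf{1}_{\oo_\so^n}$; for any $\Phi$ supported near $0$ with $\Phi(0)\neq 0$, the telescoped function $\Phi-\Phi(\w_\so^{-1}\,\cdot\,)$ is supported on a shell, and its $\Z_\so$-integral along any nonzero line is always the volume of an $\oo_\so^\times$-coset (this is what the residue at $\X=1$ of the implicit Tate factor on $\Z_\so$ must compute). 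So the $(q_\so-1)$ obstruction is intrinsic to this presentation. Fix (b) (direct computation from the support of $\W_\bl$) could in principle work, but you have not carried it out and it is not a small step.

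The paper resolves this case by a structurally different argument that makes no reference to the Schwartz function or to the $\Z_\so$-quotient at all. One uses the Iwasawa decomposition $\G=\P\Z\K$ to write
\[
\mathcal{L}_\pi(\W)=\int_{\K^\sigma\cap\P^\sigma\backslash\K^\sigma}\int_{\N^\sigma\backslash\P^\sigma}\W(pk)\,|\det(pk)|_\so^{-1}\,dp\,dk,
\]
then observes (via Corollary \ref{cor relatively banal simple definition}) that not relatively banal is equivalent to $\pi$ being $|\det|_\so$-distinguished, so that by the multiplicity one statement for the mirabolic pair, the inner integral $\mathcal{P}_{\pi,|\det|_\so}$ is $|\det|_\so$-equivariant for the action of $\G_\so$. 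The outer integral then contributes exactly $\mathrm{vol}(\K^\sigma\cap\P^\sigma\backslash\K^\sigma)=q_\so^n-1$ (up to a $p$-power), and since $q_\so^{n/e_\so(\pi)}\equiv 1$ implies $q_\so^n\equiv 1\,[\ell]$, this vanishes — cleanly, and for every $\W$, not just the test vector. No division by $q_\so-1$ ever occurs. So the key missing idea in your proposal is precisely this equivariance argument, which replaces the residue computation in the degenerate case.
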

\begin{proof}
Thanks to the Iwasawa decomposition~$\G=\P\Z\K$, we have the equality:
\[\int_{\Z^\sigma\N^\sigma\backslash \G^\sigma} \W(h)dh=\int_{\K^\sigma\cap \P^\sigma\backslash\K^\sigma}\int_{\N^\sigma\backslash \P^\sigma} \W(pk)|\det(pk)|_\so^{-1}dp dk.\] We introduce the power series 
\[\I_{\As,(0)}(\X,\W)=\sum_{l\in \mathbb{Z}} \left(\int_{\N^\sigma\backslash {\P^\sigma}^{(l)}} \W(p)|\det(p)|_\so^{-1} dp\right)\X^l\] where 
${\P^\sigma}^{(l)}=\{p\in \P^\sigma,\ \val_{\F_\so}(\det(p))=l\}$ which is in fact a Laurent polynomial as~$\pi$ is cuspidal, so that 
\[\mathcal{P}_\pi(\W)=\I_{\As,(0)}(1,\W).\]

Now suppose that~$\pi$ is not relatively banal, then~$\pi$ is~$|\det(~\cdot~)|_\so$-distinguished, and appealing to \cite[Proposition B.23]{AKMSS}, it means that the linear form \[\mathcal{P}_{\pi,|\det(~\cdot~)|_\so}:\W\mapsto \int_{\N^\sigma\backslash \P^\sigma} \W(p)|\det(p)|_\so^{-1}dp\] is 
$|\det(~\cdot~)|_\so$-equivariant under the action of~$\G_\so$.  So in particular, up to possible renormalisation of the invariant measure,
\[\int_{\Z^\sigma\N^\sigma\backslash \G^\sigma} \W(h)dh=
\mathrm{vol}(\K^\sigma\cap \P^\sigma\backslash\K^\sigma)\mathcal{P}_{\pi,|\det(~\cdot~)|_\so}(\W)=(q_\so^n-1)\mathcal{P}_{\pi,|\det(~\cdot~)|_\so}(\W)=0\] as 
$q_\so^{n/e_\so(\pi)}=1$.

So it remains to understand what happens when~$\pi$ is relatively banal. As we said by multiplicity one we know that 
$\mathcal{L}_\pi=\lambda \mathcal{P}_\pi$ and we noticed that~$\mathcal{P}_\pi(\W_{\bl})\neq 0$. Hence~$\mathcal{L}_\pi=0$ if and only if 
$\mathcal{L}_\pi(\W_{\bl})=0$. However, following the proof of \cite[Theorem 9.1]{KMNagoya} at the end of p.19 of [ibid.] or the proof of \cite[Theorem 7.14]{AKMSS}, one gets up to a possible renormalisation of invariant measures: 
\[\mathcal{L}_{\pi,\X}(\W_{\bl}):=\int_{\K^\sigma\cap \P^\sigma\backslash\K^\sigma}\I_{\As,(0)}(\X,\rho(k)\W_{\bl})dk=
(q_\so-1)(q_\so^{n/e_\so(\pi)}-1)\frac{1-\X^n}{1-\X^{n/e_\so(\pi)}}.\]
Now the value at~$\X=1$ of~$\mathcal{L}_{\pi,\X}$ is~$\mathcal{L}_\pi$, so~$\mathcal{L}_\pi$ vanishes if and only if 
$\frac{1-\X^n}{1-\X^{n/e_\so(\pi)}}$ vanishes at~$\X=1$. However the order of the zero of~$1-\X^n$ is the 
$\ell^{\mathrm{val}_\ell(n)}$ whereas that of of the zero of~$1-\X^{n/e_\so(\pi)}$ is~$\ell^{\mathrm{val}_\ell(n/e_\so(\pi))}$. 
This means that if~$\pi$ is relatively banal,~$\mathcal{L}_\pi$ is nonzero if and only if 
$\mathrm{val}_\ell(n)=\mathrm{val}_\ell(n/e_\so(\pi))$, i.e. if and only if~$\ell$ does not divide~$e_\so(\pi)$.
\end{proof}

\begin{remark}\label{remarklinearforms}
Here we explain how this vanishing result modulo~$\ell$ is related to the vanishing of 
the $\ell$-adic proportionality constant between $\mathcal{L}$ and $\mathcal{P}$.  For~an algebraically closed field~$\mathbf{C}$, write~$\Cusp_{\mathbf{C},\text{dist}}(\G)$ for the set of isomorphism classes of distinguished cuspidal~$\mathbf{C}$-representations.  Fix an isomorphism~$\mathbb{C}\simeq \Ql$, this induces a bijection~$\Cusp_{\mathbb{C},\text{dist}}(\G)\rightarrow \Cusp_{\Ql,\text{dist}}(\G)$, independent of the choice of isomorphism.

Let~$\pi\in \Cusp_{\Ql,\text{dist}}(\G)$ and~$\psi:\N\rightarrow\Zl^\times$ be an~$\N^\sigma$-distinguished nondegenerate character of~$\N$.  Then by \cite[Corollary 1.2]{AKT}, there exists~$\mu\in\Ql$ such that
\begin{equation}\label{multoneequation}
\mathcal{L}_\pi=\mu\mathcal{P}_\pi,
\end{equation}

Let~$c_\pi$ denote the central character of~$\pi$ and~$\Res_\P$ denote the restriction of Whittaker functions to~$\P$.  Then \[\mathcal{W}(\pi,\psi)\subset \ind_{\Z\N}^{\G}(c_{\pi}\otimes \psi)\text{ and }\Res_{\P}(\mathcal{W}(\pi,\psi))\subset \ind_{\N}^{\P}(\psi),\] the first fact being a consequence of the second, which has been known since \cite{BZ76}.  Now let $\mathcal{W}(\pi,\psi)_e$ denote the $\Zl$-submodule of $\mathcal{W}(\pi,\psi)$ consisting of Whittaker functions with values in $\Zl$, it follows from \cite[Theorem 2]{VigIntegral} and \cite[Theorem 2]{Vig-Kirillov} that $\mathcal{W}(\pi,\psi)_e$ is a lattice in $\mathcal{W}(\pi,\psi)$ and $\Res_{\P}(\mathcal{W}(\pi,\psi)_e)= 
 \ind_{\N}^{\P}(\psi,\Zl)$ is a lattice in $\Res_{\P}(\mathcal{W}(\pi,\psi))$,
 reducing to $\mathcal{W}(\pi,\psi)$ and $\Res_{\P}(\mathcal{W}(\pi,\psi))$ respectively.
 
 Finally from \cite[Section 2.2]{KM17}, there are appropriate $\ell$-adic and $\ell$-modular invariant measures on $\Z^\sigma\N^\sigma\backslash \G^\sigma$ and $\N^\sigma\backslash \P^\sigma$ such that 
\begin{align}
\label{al1} r_\ell(\mathcal{L}_{\pi}(\W_e))&=\mathcal{L}_{r_{\ell}(\pi)}(r_\ell(\W_e)),\\
\label{al2}r_\ell(\mathcal{P}_{\pi}(\W_e))&=\mathcal{P}_{r_{\ell}(\pi)}(r_\ell(\W_e))
\end{align}
 for all $\W_e\in \W(\pi,\psi)_e$ and 
$\mathcal{P}_{\pi}(\Res_{\P}(\mathcal{W}(\pi,\psi)_e))=\Zl$. 

Evaluating Equation \ref{multoneequation} on an element~$\W_e\in\mathcal{W}(\pi,\psi)_e$ such that $\mathcal{P}_{\pi}(\W_e)=1$ we deduce that~$\mu\in \Zl$.
Now Theorem \ref{thm nonvanishingperiod} and Equations~\ref{al1} and \ref{al2} imply that $\ell$ divides $\mu$ if and only if either~$r_{\ell}(\pi)$ is not relatively banal or~$\ell\mid e_\so(\pi)$.  Running over all~$\ell\neq p$, we recover the radical of the~$p$-regular part of~$\mu$, (explicitly using the type theoretic definition of relatively banal, Definition \ref{definition relbanaldef}). 

As mentioned already, this scalar~$\mu$ is a very interesting and subtle quantity: by \cite[Theorem 7.1]{Anand-Matringe18}, we have
\begin{equation*}
\mathcal{L}_\pi=\lambda \frac{d(\rho)}{d(\pi)}\mathcal{P}_\pi,
\end{equation*}
where~$\lambda$ is a constant independent of~$\pi$,~$\rho$ is the cuspidal~$\Ql$-representation of the quasi-split unitary group in~$n$-variables defined over~$\F_\so$ which base changes to~$\pi$ (stably or unstably depending on the parity of~$n$), and~$d(\rho)$ and~$d(\pi)$ denote the formal degrees of~$\rho$ and~$\pi$ respectively under the normalisation of invariant measures of \cite{HII08}.  One could check that the formal degrees are rational for our well chosen measures (and $\lambda$ as well), and preserved under the bijection~$\Cusp_{\mathbb{C},\text{dist}}(\G)\rightarrow \Cusp_{\Ql,\text{dist}}(\G)$.

While we have explained how~Theorem \ref{thm nonvanishingperiod} tells us exactly when~$\mu$ vanishes modulo $\ell$, we could also go in the other direction:  By \cite{HII08} and \cite{BP18}, the constant $\mu$ could be computed explicitly and its explicit description would give a different proof of Theorem \ref{thm nonvanishingperiod}. It should be clear to the reader that the amount of work required for such a proof is much more considerable than that of the proof given above.
\end{remark}

\subsection{Comparison of banal and relatively banal}\label{section comparison banal rel banal}
Finally, we compare our notion of relatively banal distinguished with the notion of banal representation introduced in \cite{MSDuke} for cuspidal representations.

By \cite[Remarque 8.15]{MSDuke}, a cuspidal~$\Fl$-representation $\pi$ of~$\G_\so$ is banal if and only if \[|\det(\ )|_\so\otimes \pi\not\simeq \pi.\] However the map~\[b:\pi\mapsto \pi\otimes \pi^{\vee}\] is a bijection between the set of (isomorphism classes of) irreducible representations of~$\G_\so$ and the set of~$\Delta(\G_\so)$-distinguished irreducible representations of~$\G'=\G_\so\times \G_\so$, where~$\Delta$ is the diagonal embedding of $\G_\so$ into $\G'$. In particular, 
$\pi$ (seen as the distinguished representation~$b(\pi)$ of~$\G'$) is banal if and only if 
$|\det(\ )|_\so\otimes  b(\pi)=(|\det(\ )|_\so\otimes \pi)\otimes \pi^\vee$ is not distinguished. Note that~$|\ |_\so$ plays the same role for the split quadratic algebra~$(\F_\so\times \F_\so)/\F_\so$ that 
$|\ |_{\so}$ plays for~$\F/\F_\so$, i.e. it is a square root of the absolute value on the bigger algebra. So this proves the exact analogy of banal cuspidal representations of $\G_\so$ and relatively banal distinguished cuspidal representations of $\G$ according to Corollary \ref{cor relatively banal simple definition}.  

The analogy can also be seen at the~$\L$-factor level: it follows from \cite[Theorem 4.9]{KMNagoya} that if 
$\pi\otimes \pi'$ is a cuspidal representation of~$\G'$, then the Rankin--Selberg~$\L$-factor~$\L(\X,\pi,\pi')$ (which can be thought of as the Asai~$\L$-factor of~$\pi\otimes \pi'$) has a pole at~$\X=1$ if and only if~$\pi\otimes \pi'$ is~$\Delta(\G_\so)$-distinguished and~$\pi$ is banal, which is the exact analogue of Theorem \ref{thm Pole of Asai} replacing banal with relatively banal.

Finally, in terms of the type theory definition, a cuspidal representation $\pi$ of $\G_\so$ is banal if and only if $q_\so^{n/e(\pi)}\not \simeq 1[\ell]$, but tracking down how $e(\pi)$ is defined with respect to $\pi$ in terms of type theory (more precisely lattice periods) shows that it plays 
the same role for the $\Delta(\G_\so)$-distinguished representation $b(\pi)$ of $\G'$ that $e_\so(\tau)$ plays for a distinguished cuspidal representation $\tau$ of $\G$. 

\bibliographystyle{plain}
\bibliography{Distinguished}
\end{document}